\numberwithin{equation}{section}
\def\d{\mathrm{d}}\def\e{\mathrm{e}}
\newtheorem{theorem}{Theorem}[section]
\newtheorem{lemma}[theorem]{Lemma}
\newtheorem{prop}[theorem]{Proposition}
\newtheorem{remark}[theorem]{Remark}
\newtheorem{example}[theorem]{Example}
\def\<{\langle}\def\>{\rangle}
\def\prf{\noindent{\bf Proof.  }}
\def\deprf{\hfill$\Box$\medskip}
\newcommand{\scr}[1]{\mathscr #1}
\def\d{\mathrm{d}}
\def\de{\end{equation}}
\def\edar{\end{eqnarray}}
\def\l{\left}\def\r{\right}
\def\lan{\langle}\def\ran{\rangle}
\def\[{\l[} \def\]{\r]}
\def\({\l(} \def\){\r)}
\def\bar{\overline}
\def\beqlb{\begin{eqnarray}}\def\eeqlb{\end{eqnarray}}
\def\beqnn{\begin{eqnarray*}}\def\eeqnn{\end{eqnarray*}}
\def\d{{\mbox{\rm d}}}
\title{{\bf Symmetry and functional inequalities  for  stable L\'evy-type operators}}
\author{
	{\bf Lu-Jing Huang}
	\footnote{ School of Mathematics and Statistics \& Key Laboratory of Analytical Mathematics and Applications (Ministry of Education) \& Fujian Provincial Key Laboratory of Statistics and Artificial Intelligence \& Fujian Key Laboratory of Analytical Mathematics and Applications (FJKLAMA) \& Center for Applied Mathematics of Fujian Province (FJNU), Fujian Normal University, 350117 Fuzhou, China. Email: \texttt{huanglj@fjnu.edu.cn}}\\
	\and
	{\bf Tao Wang}\footnote{Corresponding author. School of Mathematics and Statistics, Jiangsu Normal University, Xuzhou, China. Email: \texttt{taowang@jsnu.edu.cn}}\\ 
}
\date{}
\begin{document}

\maketitle


\begin{abstract}

In this paper,
we establish the sufficient and necessary conditions  for the symmetry of the following stable L\'evy-type operator $\mathcal{L}$ on $\mathbb{R}$:
$$\mathcal{L}=a(x){\Delta^{\alpha/2}}+b(x)\frac{\d}{\d x},$$
where $a,b$ are the continuous positive and differentiable functions, respectively.
We then study the criteria for functional inequalities, such as logarithmic Sobolev inequalities, Nash inequalities and super-Poincar\'e inequalities 
under the assumption of symmetry.
Our approach involves the Orlicz space theory and the estimates of the Green functions.

\end{abstract}

{\bf Keywords and phrases:}
 stable L\'evy-type operator; symmetry; functional inequality; Orlicz space; Green function.

{\bf Mathematics Subject classification(2020): }  60G52 47G20 60H10

\section{Introduction and main results}\label{section1}

\ \ \ \ Functional inequalities are widely recognized as essential tools in probability theory.
They paly a crucial role in understanding various properties of Markov processes, including ergodicity, heat kernel estimates, and convergence rate of the semigroups.
A comprehensive treatment of the general theory of functional inequalities can be found in \cite{cmf05, wfy05} etc.
Furthermore, functional inequalities naturally arise in diverse applications, such as statistical models (e.g., Bayesian statistics, especially in regression and parameter identification, as discussed in \cite{CG22}), statistical physics (e.g., particle systems, see \cite{GLWZ22, RW06, SS20}), differential geometry (see \cite{CGGR10, CW08, W04}), and partial differential equation theory (e.g. see \cite{CKS87,ckkw21} for the estimates of heat kernels, i.e., the fundamental solutions of PDEs). 

Functional inequalities are particularly important for symmetric Markov processes as they are closely linked to the long-time behavior of such processes.
For example, the Poincar\'e inequality is equivalent to the exponential ergodicity of the processes. Additionally, the logarithmic Sobolev inequality implies convergence in entropy of the semigroup, while the Nash-type inequality corresponds to algebraic convergence of the semigroup, see \cite{cmf05, wfy05} for more details. Therefore, it becomes imperative to determine the validity of these functional inequalities.

It is known that the explicit criteria for functional inequalities in symmetric  Markov chains and diffusions have been developed, specifically for birth-death processes and one-dimensional diffusions. Notably,   these criteria are both necessary and sufficient in these cases, refer to \cite{cmf05, cmf04, wfy05} for more details.

In recent year, there have been developments in establishing functional inequalities in L\'evy-type jump processes.
For instance, \cite{CXW14'} investigates functional inequalities for non-local Dirichlet forms with finite-range jumps or large jumps. Sufficient conditions for Poincar\'e-type and Nash inequalities of time-changed symmetric stable processes are provided in \cite{CW14, wz21}. Additionally, \cite{WW15} establishes Poincar\'e-type inequalities for stable-like Dirichlet forms.
It is worth emphasizing that these results only offer sufficient conditions, and necessary conditions for functional inequalities in L\'evy-type processes remain largely unknown.
The goal of this paper is to establish both sufficient and necessary conditions for functional inequalities in a specific  class of L\'evy-type processes.

To facilitate comprehension, we will begin by presenting the results for one-dimensional diffusions.
For that, let $a,b$ be continuous functions on $[0,\infty)$ with $a>0$, and let $C(x)=\int_1^xb(t)/a(t)\d t$.
From the classical diffusion process theory, it is known that the following elliptic operator on $[0,\infty)$
$$\mathcal{A}=a(x)\frac{\d^2}{\d x^2}+b(x)\frac{\d}{\d x}$$
is symmetric with respect to measure $\nu(\d x):=\e^{C(x)}\d x/{a(x)}$.
Then drawing upon theories of Dirichlet forms, Orlicz spaces and other related tools, the explicit criteria for various ergodic properties and functional inequalities are established. For a comprehensive summary of eleven criteria for one-dimensional diffusions, refer to \cite[Table 5.1]{cmf05}.

Turn to jump processes, a natural extension involves replacing the Laplacian in $\mathcal{A}$ with the fractional Laplacian. This entails considering the nonlocal L\'evy-type operator given by:

\begin{equation}\label{fractional-drift}
\mathcal{L}=a(x){\Delta^{\alpha/2}}+b(x)\frac{\d}{\d x},
\end{equation}
where $a, b$ are continuous positive and differentiable functions on $\mathbb{R}$ respectively, and $\Delta^{\alpha/2}:=-(-\Delta)^{\alpha/2}$ is the fractional Laplacian operator which enjoys the following expression:
\begin{equation*}\label{fractional laplacian}
	\Delta^{\alpha/2}f(x)=\int_{\mathbb{R}\setminus\{0\}}\left(f(x+z)-f(x)-\nabla f(x)\cdot z\mathbf{1}_{\{|z|\leqslant 1\}}\right)\frac{C_{\alpha}}{|z|^{1+\alpha}} \d z
\end{equation*}
with $C_{\alpha}:=\frac{\alpha2^ {\alpha-1}\Gamma((\alpha+1)/2)}{{\pi}^{1/2}\Gamma(1-{\alpha}/{2})}$.
Indeed, from \cite{K17} we can see that $\Delta^{\alpha/2}$ can also be described by the singular integral (Cauchy principle integral):
$$\Delta^{\alpha/2}f(x)=\lim_{j\rightarrow\infty}\int_{\{|y-x|>1/j\}}(f(y)-f(x))\frac{C_{\alpha}}{|y-x|^{1+\alpha}} \d y.$$
In addition, we know that \eqref{fractional-drift} is associated with the following stochastic differential equation (SDE):
$$\d Y_t=\sigma(Y_{t-})\d X_t+b(Y_t)\d t,$$
where $(X_t)_{t\geqslant0}$ is the symmetric stable process on $\mathbb{R}$ with generator $\Delta^{\alpha/2}$, $\alpha\in (0,2)$, and $\sigma(x):=a(x)^{1/\alpha}$.


For a measure $\mu$ on $\mathbb{R}$, let $L^2(\mu)$ be the space of square integrable functions with the scalar product defined as $\langle f,g \rangle_\mu:=\int_{\mathbb{R}}f(x)g(x)\mu(\d x)$. In the following, we say operator $\mathcal{L}$ is symmetric, if there exists a measure $\mu$ on $\mathbb{R}$ such that
\begin{equation}\label{sym-meas}
\lan \mathcal{L}f,g\ran_\mu=\lan f, \mathcal{L}g\ran_\mu\quad \text{for all }f,g\in L^2(\mu).
\end{equation}
For simplicity, we will call the measure $\mu$ satisfying \eqref{sym-meas} as the reversible measure of $\mathcal{L}$.

Our first main result is for the symmetric property of $\mathcal{L}$.

\begin{theorem}\label{symmetric operator}
Let $\mathscr{P}$ be the set of measures on $\mathbb{R}$ that are absolute continuous with respect to Lebesgue measure. Then
the L\'evy-type operator \eqref{fractional-drift} is symmetric with respect to a measure $\mu\in\mathscr{P}$ if and only if $b\equiv 0$. In this case, its reversible measure is $\mu(\d x):=a(x)^{-1}\d x=\sigma(x)^{-\alpha}\d x$.
\end{theorem}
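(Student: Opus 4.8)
The plan is to establish the two implications separately: the implication ``$b\equiv 0\Rightarrow\mathcal{L}$ symmetric'' will be a direct verification, and the converse will carry the weight. For the first, assume $b\equiv 0$, so $\mathcal{L}=a\Delta^{\alpha/2}$, and take $\mu(\d x)=a(x)^{-1}\d x$. For $f,g$ in a core such as $C_c^\infty(\mathbb{R})$ I would compute
$$\langle\mathcal{L}f,g\rangle_\mu=\int_{\mathbb{R}}a(x)\Delta^{\alpha/2}f(x)\,g(x)\,a(x)^{-1}\,\d x=\int_{\mathbb{R}}\Delta^{\alpha/2}f(x)\,g(x)\,\d x,$$
and then invoke the self-adjointness of $\Delta^{\alpha/2}$ on $L^2(\d x)$ --- equivalently, the symmetry of the associated Dirichlet form $\mathcal E(f,g)=\tfrac{C_\alpha}{2}\iint\tfrac{(f(x)-f(y))(g(x)-g(y))}{|x-y|^{1+\alpha}}\,\d x\,\d y$ --- to rewrite it as $\int_{\mathbb{R}}f\,\Delta^{\alpha/2}g\,\d x=\langle f,\mathcal{L}g\rangle_\mu$; density of $C_c^\infty(\mathbb{R})$ in $L^2(\mu)$ then yields \eqref{sym-meas}.

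For the converse, suppose $\mathcal{L}$ is symmetric with respect to some nontrivial $\mu(\d x)=m(x)\,\d x$, $m\geqslant 0$, and test \eqref{sym-meas} against $f,g\in C_c^\infty(\mathbb{R})$ with \emph{disjoint supports}. The point of this choice is twofold: first, $bf'g\equiv bfg'\equiv 0$ (at every point one of the two functions vanishes in a whole neighbourhood), so the drift terms drop out; second, for $x\in\operatorname{supp}g$ the function $f$ vanishes near $x$, hence $\Delta^{\alpha/2}f(x)=C_\alpha\int_{\mathbb{R}}|y-x|^{-1-\alpha}f(y)\,\d y$ with no principal value and no diagonal singularity. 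Substituting this together with the analogous formula obtained by interchanging $f$ and $g$, applying Fubini (the kernels are now bounded and $m\in L^1_{\mathrm{loc}}$ is integrated over compact sets), and relabelling the variables, I expect \eqref{sym-meas} to reduce to
$$\iint_{\mathbb{R}^2}\frac{a(x)m(x)-a(y)m(y)}{|x-y|^{1+\alpha}}\,f(y)\,g(x)\,\d x\,\d y=0$$
for all such $f,g$. Since $f,g$ are otherwise arbitrary, this forces $a(x)m(x)=a(y)m(y)$ for a.e.\ $(x,y)$, i.e.\ $am$ equals a constant $c$ almost everywhere; nontriviality and positivity of $\mu$ give $c>0$, so $m=c\,a^{-1}$, in particular $m$ admits the continuous positive version $c\,a^{-1}$.

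It then remains to force $b\equiv 0$. Feeding $m=c\,a^{-1}$ back into \eqref{sym-meas} for \emph{arbitrary} $f,g\in C_c^\infty(\mathbb{R})$, the nonlocal contribution becomes $c\big(\int_{\mathbb{R}}g\,\Delta^{\alpha/2}f\,\d x-\int_{\mathbb{R}}f\,\Delta^{\alpha/2}g\,\d x\big)=0$ by self-adjointness of $\Delta^{\alpha/2}$, leaving
$$\int_{\mathbb{R}}\frac{b(x)}{a(x)}\big(f'(x)g(x)-f(x)g'(x)\big)\,\d x=0\qquad\text{for all }f,g\in C_c^\infty(\mathbb{R}).$$
Writing $\beta:=b/a\in C(\mathbb{R})$ and fixing $g$, this reads $(\beta g)'=-\beta g'$ in the sense of distributions; the product rule $(\beta g)'=\beta' g+\beta g'$ then gives $\beta' g=-2\beta g'$ for every $g\in C_c^\infty(\mathbb{R})$. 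Taking $g\equiv 1$ on an arbitrary bounded interval shows $\beta'=0$ there, so $\beta$ is a global constant, and substituting this back into $\beta' g=-2\beta g'$ forces that constant to be $0$. Hence $b\equiv 0$, and the reversible measure is $\mu(\d x)=a(x)^{-1}\,\d x=\sigma(x)^{-\alpha}\,\d x$, up to the irrelevant normalising constant.

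The step I expect to be the main obstacle is the reduction in the converse direction: taking test functions with disjoint supports is precisely what simultaneously kills the first-order terms and removes the diagonal singularity of $\Delta^{\alpha/2}$ (a singularity one genuinely cannot disregard when $\alpha\in[1,2)$), and one must then carefully justify the use of the singular-integral representation, the interchange of integrals, and the passage from ``the bilinear form annihilates every disjoint-support pair'' to ``$am$ is a.e.\ constant'' while $m$ is a priori only locally integrable.
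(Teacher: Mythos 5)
Your proof is correct, and it takes a genuinely different route from the one in the paper. The paper first writes down the formal adjoint $\mathcal{L}^*$ with respect to $\mu(\d x)=\rho(x)\d x$, and then invokes a result of K\"uhn--Schilling (Lemma~\ref{levy-type measure}) on small-time asymptotics of L\'evy-type transition kernels to identify the jump kernel of $\mathcal{L}$ at a point; comparing this with the jump kernel read off from $\mathcal{L}^*$ and imposing $\mathcal{L}=\mathcal{L}^*$ yields $a\rho\equiv\text{const}$, after which the drift terms are matched to force $b\equiv 0$. You instead test the symmetry identity directly against pairs $f,g\in C_c^\infty$ with disjoint supports: this simultaneously kills the first-order terms and removes the principal-value singularity, so the nonlocal part collapses to the off-diagonal bilinear identity $\iint\frac{a(x)m(x)-a(y)m(y)}{|x-y|^{1+\alpha}}f(y)g(x)\,\d x\,\d y=0$, from which $am\equiv c$ a.e.\ follows (Fubini gives a full-measure $x_0$ with $a(y)m(y)=a(x_0)m(x_0)$ for a.e.\ $y$). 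Feeding $m=c\,a^{-1}$ back in and using self-adjointness of $\Delta^{\alpha/2}$ then isolates the drift identity $\int (b/a)(f'g-fg')\,\d x=0$, which kills $b$ exactly as in the paper's final step. Both arguments pass through the same intermediate conclusion $am=\text{const}$; the difference is purely in how it is reached. Your route has the advantage of being entirely analytic and self-contained (no reference to the L\'evy-triplet machinery of \cite{ks19}), at the cost of having to verify the density/Fubini step for the disjoint-support bilinear form, which you correctly flag and which does go through because $am\in L^1_{\mathrm{loc}}$ and the kernel is bounded away from the diagonal. One cosmetic remark: the positivity of the constant $c$ is deduced from $\mu$ being a nontrivial nonnegative measure, which is implicit in the paper; worth stating since otherwise $\mu\equiv 0$ would vacuously satisfy \eqref{sym-meas}.
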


Theorem \ref{symmetric operator} states that the L\'evy-type operator \eqref{fractional-drift} is symmetric if and only if it is a weighted fractional Laplacian operator $\mathcal{L}=a(x){\Delta^{\alpha/2}}$, associated with  the \textbf{time-changed symmetric $\alpha$-stable process}:
\begin{equation}\label{timechange}
Y_{t}:= X_{\zeta_{t}},
\end{equation}
where
$$\zeta_{t}:=\inf \left\{s > 0: \int_{0}^{s} a\left(X_{u}\right)^{-1} \mathrm{~d} u>t\right\},$$
cf. \cite{CW14}.
Moreover, according to \cite[Proposition 2.1]{DK20}, the process $Y=(Y_t)_{t\geqslant 0}$ can also be described as the unique weak solution of the SDE:
\begin{equation*}\label{SDE}
	\mathrm{d} Y_{t}=\sigma\left(Y_{t-}\right) \mathrm{d} X_{t}.
\end{equation*}

Consider the long-time behavior of the process $Y$.
It has been established in \cite[Remark 43.12]{Sa99} that $Y$ is pointwise recurrent if and only if $\alpha\in(1,2)$. Furthermore,
$Y$ is {\bf ergodic}, i.e.,
$$\lim_{t\rightarrow\infty}\|P_t(x,\cdot)-\mu\|_{\mathrm{Var}}=0,$$
if and only if its reversible measure $\mu$ (defined in Theorem \ref{symmetric operator}) satisfies $\mu(\mathbb{R})<\infty$.
Here $P_t(x,\cdot)$ denotes the transition semigroup of $Y$, and $\|\pi\|_{\rm{Var}}:=\sup_{|f|\leqslant 1}|\pi(f)|$ represents the total variation of a measure $\pi$.

In the subsequent part of this section, our attention is solely on the time-changed symmetric $\alpha$-stable process $Y$ as defined in \eqref{timechange}.
Additionally, we assume that $\alpha\in (1,2)$ and that the process $Y$ is ergodic. For convenience, we further assume that the reversible measure $\mu(\mathbb{R})=1$.

By~\cite[Section 1]{CW14}, the Dirichlet form $(\mathscr{E},\mathscr{F})$ of the process $Y$ is given by
\begin{equation}\label{Diri form}
\begin{split}
	\mathscr{E}(f, g)&=\frac{1}{2} \int_{\mathbb{R}} \int_{\mathbb{R}}(f(x)-f(y))(g(x)-g(y)) \frac{C_{\alpha}\mathrm{d} x \mathrm{d} y}{|x-y|^{1+\alpha}} , \ \ f,g\in \mathscr{F},\\
\mathscr{F}&=\left\{f\in L^2(\mu): \ \mathscr{E}(f,f)<\infty \right\}.
\end{split}
\end{equation}
We say that the process $Y$ or the Dirichlet form $(\mathscr{E},\mathscr{F})$ satisfies the Poincar\'e inequality if there exists a constant $\lambda>0$ such that
\begin{equation}\label{Poincare}
  \mu(f^2)-\mu(f)^2\leqslant \lambda^{-1}\mathscr{E}(f,f) \quad\text{for all } f\in\mathscr{F}.
\end{equation}
It is worth emphasizing that the Poincar\'e inequality is equivalent to the exponential ergodicity of $Y$:
\begin{equation}\label{exp-erg}
\|P_t(x,\cdot)-\mu\|_{\mathrm{Var}}\leqslant C(x)\e^{-\lambda t}
\end{equation}
for some constant $\lambda>0$ and $0<C(x)<\infty$ for any $x\in\mathbb{R}$.
Furthermore, the optimal constant $\lambda$ in \eqref{Poincare} and \eqref{exp-erg} occurs when the spectral gap $\lambda_1$ is taken:
\begin{equation*}
	\lambda_{1}:=\inf\left\{\mathscr{E}(f, f): f \in \mathscr{F}, \mu(f^{2})=1, \mu(f)=0\right\}.
\end{equation*}
See e.g. \cite{cmf98} for more details.

In \cite{W23}, the explicit criterion for the Poincar\'e inequality of the process $Y$ has been derived.


\begin{theorem}[{\cite[Theorem 1.1]{W23}}]\label{exp erg}
	For the process $Y$, the Poincar\'e inequality \eqref{Poincare} holds
	if and only if $$\delta:=\sup_{x\in\mathbb{R}} \left\{|x|^{\alpha-1}\mu( (-|x|,|x|)^c)\right\} <\infty.$$
		Furthermore, the spectral gap
	$\lambda_1	\geqslant
	1/(4\omega_{\alpha}\delta),$
	where
	\begin{equation}\label{omegaalpha}
		\omega_{\alpha}:=-\frac{1}{\cos(\pi\alpha/2)\Gamma(\alpha)}>0.
	\end{equation}
\end{theorem}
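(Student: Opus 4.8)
The plan is to characterize the Poincaré inequality via the classical weighted Hardy inequality on the half-line, exploiting the fact that $\alpha\in(1,2)$ and the Dirichlet form $(\mathscr{E},\mathscr{F})$ in \eqref{Diri form} is that of the symmetric $\alpha$-stable process, whose jump kernel $C_\alpha|x-y|^{-1-\alpha}$ has a known ``one-sided'' lower bound in terms of the tails of $\mu$. Concretely, I would first reduce matters to comparing $\mathscr{E}(f,f)$ with the energy form $\int_{\mathbb R} \big(\int_{\mathbb R} |f(x)-f(y)|^2 \tfrac{C_\alpha}{|x-y|^{1+\alpha}}\,\d y\big)\,\mu(\d x)$ is not needed; instead, split $\mathbb R$ at the origin and estimate, for $x>0$, the contribution of jumps from $x$ to the region $(-\infty,0)$, which is bounded below by a multiple of $\int_0^\infty (f(x)-c)^2\,x^{-\alpha}\,\d x$ for an appropriate centering constant $c$ (using $\int_{(-\infty,-x)}\tfrac{C_\alpha}{|x-y|^{1+\alpha}}\,\d y \asymp \omega_\alpha^{-1} x^{-\alpha}$ via \eqref{omegaalpha}). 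This is the mechanism by which the quantity $|x|^{\alpha-1}\mu((-|x|,|x|)^c)$ enters.

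Next, for the \emph{sufficiency} direction, assuming $\delta<\infty$, I would run the standard variance-splitting argument: write $\mu(f^2)-\mu(f)^2 \leqslant \inf_c \mu((f-c)^2)$, choose $c$ to be (roughly) the $\mu$-median or the value of $f$ ``at infinity,'' and then bound $\mu((f-c)^2)$ on $(0,\infty)$ and on $(-\infty,0)$ separately by a Hardy-type inequality
\[
\int_0^\infty (f(x)-c)^2\,\mu(\d x) \;\leqslant\; B\int_0^\infty \big(\text{jump energy of }f\text{ across }0\big),
\]
where the Hardy constant $B$ is controlled by $\sup_{x>0} \mu((x,\infty))\big(\int_0^x (\text{something})\big)$; the point is that the relevant ``Hardy weight'' coming from the stable kernel makes this supremum comparable to $\delta$, and tracking the constants gives $\lambda_1\geqslant 1/(4\omega_\alpha\delta)$. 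For the \emph{necessity} direction, assuming the Poincaré inequality holds with constant $\lambda$, I would test it against the family of truncated/plateau functions $f_R$ that equal $1$ on $(R,\infty)$, $0$ on $(-R,R)$ interpolating linearly (or equal $-1$ on $(-\infty,-R)$ to respect the mean-zero constraint), compute both sides asymptotically, and read off $|R|^{\alpha-1}\mu((-R,R)^c)\leqslant \text{const}\cdot\lambda^{-1}$, yielding $\delta<\infty$.

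The main obstacle I anticipate is the sharp bookkeeping of constants in the Hardy inequality so as to obtain exactly $1/(4\omega_\alpha\delta)$ rather than some unspecified multiple — this requires the precise two-sided estimate $\int_{|y-x|>|x|}\tfrac{C_\alpha}{|x-y|^{1+\alpha}}\,\d y = \tfrac{2}{\alpha}C_\alpha |x|^{-\alpha} = \omega_\alpha^{-1}|x|^{-\alpha}$ together with the sharp form of the one-dimensional weighted Hardy inequality (the Muckenhoupt $A_2$-type criterion with its factor $4$). A secondary technical point is justifying that the test functions used in the necessity part lie in $\mathscr{F}$ and that the centering/median argument in the sufficiency part is compatible with the nonlocal form (no boundary terms, and the cross term between the two half-lines only helps). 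Since the statement is quoted from \cite[Theorem 1.1]{W23}, I would follow that reference's line of argument, and I expect the cited proof proceeds exactly along these Hardy-inequality lines.
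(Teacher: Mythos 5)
This theorem is quoted from \cite[Theorem~1.1]{W23}; the paper does not re-prove it, so there is no internal proof to compare against, but the paper's proof of Theorem~\ref{main-orlicz} exposes the actual mechanism: one establishes a Poincar\'e inequality for the process \emph{absorbed at the single point $0$} (invoked in the paper as ``\cite[Theorem 1.3]{W23}'' with constant $\lambda_{g_n,0}\geqslant 1/(4\omega_\alpha\widehat\delta(g_n))$), and the constant $\omega_\alpha$ enters through the Green (potential) kernel of the stable process killed at $\{0\}$, namely $G^{\{0\}}(x,y)=\tfrac{\omega_\alpha}{2}(|x|^{\alpha-1}+|y|^{\alpha-1}-|x-y|^{\alpha-1})$. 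This is genuinely different from your plan, which is a fractional Hardy inequality on the half-line obtained by splitting at the origin and keeping only the jumps across $0$.

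There is also a concrete error in the constant bookkeeping you rely on. You assert
\[
\int_{\{|y-x|>|x|\}}\frac{C_\alpha}{|x-y|^{1+\alpha}}\,\d y=\frac{2C_\alpha}{\alpha}|x|^{-\alpha}=\omega_\alpha^{-1}|x|^{-\alpha}.
\]
The first equality is right, but the second is false. Using the duplication and reflection formulas one finds
\[
\frac{2C_\alpha}{\alpha}=\frac{2^{\alpha}\Gamma\big(\tfrac{\alpha+1}{2}\big)}{\sqrt\pi\,\Gamma\big(1-\tfrac{\alpha}{2}\big)}=\frac{2\Gamma(\alpha)\sin(\pi\alpha/2)}{\pi},
\qquad
\omega_\alpha^{-1}=-\cos(\pi\alpha/2)\,\Gamma(\alpha),
\]
and these coincide only for the single value of $\alpha$ with $\tan(\pi\alpha/2)=-\pi/2$; e.g.\ at $\alpha=3/2$ one gets $1/\sqrt{2\pi}$ versus $\sqrt{2\pi}/4$. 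So your proposed source of $\omega_\alpha$ is not the right one, and tracking constants along your Hardy route would not produce $1/(4\omega_\alpha\delta)$. The correct origin of $\omega_\alpha$ is the coefficient of $|x|^{\alpha-1}$ in the compensated potential kernel of the stable process (equivalently, the renewal/harmonic function for the process killed at $0$), consistent with how $\omega_\alpha$ appears elsewhere in the paper (e.g.\ in \eqref{lower-bound-Poincare}).

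A secondary gap: even with the right constant, the reduction ``contribution of jumps from $x>0$ to $(-\infty,0)$ is bounded below by a multiple of $\int_0^\infty (f(x)-c)^2 x^{-\alpha}\,\d x$ for a fixed $c$'' is not automatic. For a fixed $c$ and fixed $x>0$ one has $\int_{-\infty}^0(f(x)-f(y))^2 w_x(y)\,\d y\geqslant (f(x)-\bar f_x)^2\int_{-\infty}^0 w_x(y)\,\d y$ with $\bar f_x$ the $w_x$-weighted average of $f$ on $(-\infty,0)$, where $w_x(y)=|x-y|^{-1-\alpha}$; the optimal centering depends on $x$, so passing to a single $c$ requires an additional argument (or an extra variance-type loss). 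In contrast, the absorbed-at-$0$ formulation sidesteps this issue entirely: one works with $f(0)=0$, writes $f=G^{\{0\}}(-\mathcal{L}^{\{0\}}f)$, and controls $\|G^{\{0\}}\|_{L^2(\mu)\to L^2(\mu)}$ via the explicit Green kernel; the Muckenhoupt-type supremum then becomes exactly $\omega_\alpha\delta$, and the factor $4$ comes out of the standard Hardy/Muckenhoupt estimate applied in that setting.

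Your necessity argument (plateau test functions equal to $\pm 1$ outside $(-R,R)$) is the elementary version and should correctly yield $\delta<\infty$ from the Poincar\'e inequality; it differs from the Green-function necessity argument used for the stronger inequalities in the paper (which tests against $u^{x_0\pm}_g=G^{[-1,1]}(h^{x_0\pm}g\,\d\mu)$ and uses the monotonicity/limit of $G^{[-1,1]}$), but since the statement only requires $\delta<\infty$ and not a matching upper bound on $\lambda_1$, the simpler test functions suffice. In summary: the approach is genuinely different from the reference's Green-function route, the necessity half is sound in spirit, but the sufficiency half as written cannot recover the stated constant because the identity $2C_\alpha/\alpha=\omega_\alpha^{-1}$ is wrong.
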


In the following, we delve into the study of other functional inequalities for the process $Y$.
To begin, we say that $(\mathscr{E},\mathscr{F})$  satisfies the {\bf logarithmic Sobolev inequality} if there exists a constant {$C>0$} such that
\begin{equation}\label{Log-Sobolev}
	\mathrm{Ent}(f^2) \leqslant \frac{2}{C}\mathscr{E}(f,f)\quad \text{for all }f\in\mathscr{F},
\end{equation}
where $\mathrm{Ent}(f)$ represents the entropy functional of $f$:
$$\mathrm{Ent}(f):=\int_{\mathbb{R}} f(x)\log f(x)\mu(\d x)-\mu(f)\log\mu(f).$$
Note that the logarithmic Sobolev inequality is strictly stronger than the Poincar\'{e} inequality. Specifically,  \cite[Chapter 8]{cmf05} shows that the logarithmic Sobolev inequality \eqref{Log-Sobolev} implies  the exponential convergence in entropy:
\begin{equation*}\label{entropy}
	\mathrm{Ent}(P_tf)\leqslant \mathrm{Ent}(f)\e^{-2C t}.
\end{equation*}

We now present an explicit criterion for  the logarithmic Sobolev inequality as follows.

\begin{theorem}[Logarithmic Sobolev inequality]\label{thm-logS}
	For the process $Y$, the logarithmic Sobolev inequality \eqref{Log-Sobolev} holds if and only if
	$$\delta_{LS}:=\sup_{x\in\mathbb{R}}\left\{|x| ^{\alpha-1}\mu((-|x|,|x|)^c)\log(1/\mu((-|x|,|x|)^c))\right\}<\infty.$$
\end{theorem}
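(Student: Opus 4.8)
The plan is to reduce the logarithmic Sobolev inequality to a weighted Hardy-type inequality on the half-lines, following the same Orlicz-space strategy that underlies the criterion for the Poincaré inequality in Theorem \ref{exp erg}, but now with the Orlicz function adapted to entropy rather than to $L^2$. Concretely, I would work with the $N$-function $\Phi(r) = r\log(1+r)$ (or an equivalent one), whose complementary function governs the entropy functional, and recall the classical fact (as in \cite{cmf05}, Chapter 8) that, up to universal constants, the logarithmic Sobolev inequality \eqref{Log-Sobolev} is equivalent to a defective/super-Poincaré-type estimate and ultimately to the finiteness of a Hardy-type constant
\[
B_{LS} := \sup_{x} \Big(\int_{|y|>|x|} \mu(\d y)\Big)\,\log\!\Big(1+\frac{1}{\int_{|y|>|x|}\mu(\d y)}\Big)\cdot\Big(\sup\text{-integral of the "speed" of the Dirichlet form up to }|x|\Big),
\]
where the second factor is exactly the quantity that produced the $|x|^{\alpha-1}$ weight in the Poincaré case. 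The first step, therefore, is to make this reduction precise for the non-local Dirichlet form \eqref{Diri form}: I would split $f$ into its symmetric/antisymmetric or positive/negative parts, reduce to monotone functions on $[0,\infty)$ and $(-\infty,0]$ separately, and bound the non-local energy $\mathscr{E}(f,f)$ from below by a one-dimensional weighted Dirichlet form using the estimate on the Green function of the killed stable process on an interval — this is where the exponent $\alpha-1$ enters, via $\int_0^{|x|}$ of the relevant kernel behaving like $|x|^{\alpha-1}$, consistent with $\omega_\alpha$ in \eqref{omegaalpha}.

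The second step is the analysis of the resulting one-dimensional weighted logarithmic Sobolev (Hardy) inequality. Here I would invoke the known capacity/Hardy criterion for logarithmic Sobolev inequalities on the line (Bobkov–Götze, or the Orlicz-Hardy characterization in \cite{cmf05}): the inequality holds iff $\sup_x \big[\mu\big((-|x|,|x|)^c\big)\log\frac{1}{\mu((-|x|,|x|)^c)}\big]\cdot w(x) < \infty$, where $w(x)$ is the weight coming from the Dirichlet form; by the Green-function estimate of Step 1, $w(x)$ is comparable to $|x|^{\alpha-1}$ (uniformly, using ergodicity and $\mu(\mathbb{R})=1$ so that $\mu((-|x|,|x|)^c)\to 0$ and the $\log$ factor is eventually positive and the $\sup$ near $x=0$ is harmless). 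Combining, $\delta_{LS}<\infty$ is exactly the sharp condition. For the \emph{sufficiency} direction I would run the upper bound on $\mathscr{E}$-versus-entropy through the Orlicz-norm interpolation, and for \emph{necessity} I would plug in test functions of the form $f = \mathbf{1}_{(-t,t)} + (\text{tail part})$, compute $\mathrm{Ent}(f^2)$ and $\mathscr{E}(f,f)$ explicitly to the leading order, and read off that $\delta_{LS}<\infty$ is forced.

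The main obstacle I anticipate is Step 1: controlling the \emph{non-local} energy $\mathscr{E}(f,f)$ by a tractable weighted one-dimensional quantity with the \emph{correct} weight $|x|^{\alpha-1}$, both from above and from below, uniformly in $x$. The lower bound (needed for necessity of $\delta_{LS}<\infty$, i.e. for the a priori inequality $\mathscr{E}(f,f)\gtrsim$ weighted energy) is delicate because long jumps couple the two half-lines and the naive truncation loses constants; the standard fix is to use the exact two-sided Green function estimates for the killed $\alpha$-stable process on $(-R,R)$ (of order $(R-|x|)^{\alpha/2}(R-|y|)^{\alpha/2}|x-y|^{-1}$ type near the boundary, with the bulk term of order $R^{\alpha-1}$), which is precisely the tool the authors announce ("estimates of the Green functions"). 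Once this comparison is in place with uniform constants, the logarithmic Sobolev statement follows from the scalar Orlicz-Hardy criterion essentially mechanically; I would expect the remaining work to be bookkeeping of constants and checking the behavior of the $\log(1/\mu(\cdot))$ factor near $x=0$ and as $|x|\to\infty$.
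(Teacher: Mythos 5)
Your proposal correctly identifies the Orlicz function $\Phi(r)=r\log(1+r)$ (equivalently $\Psi(x)=x^2\log(1+x^2)$) as the right device, and correctly anticipates that Green function estimates for the killed stable process are where the exponent $\alpha-1$ comes from. But the mechanism you propose in Step 1 --- dominating the nonlocal Dirichlet form $\mathscr{E}(f,f)$ from below by a \emph{local} weighted one-dimensional Dirichlet form, so that the scalar Bobkov--G\"otze/Orlicz--Hardy criterion can be invoked --- is not what the paper does, and it is the point where your argument would stall. There is no uniform two-sided comparison between the stable energy $\mathscr{E}(f,f)=\frac{C_\alpha}{2}\iint\frac{(f(x)-f(y))^2}{|x-y|^{1+\alpha}}\,dx\,dy$ and a weighted $H^1$ form $\int f'(x)^2 w(x)\,dx$ with $w\sim |x|^{\alpha-1}$: the two objects scale differently and control different quantities, and the weight $|x|^{\alpha-1}$ in the answer comes from the asymptotics of the harmonic function $h$ and of $G^{[-1,1]}_X$, not from an energy comparison. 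You yourself flag Step 1 as the delicate part; the paper simply never attempts it.

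What the paper does instead is prove a general Orlicz--Poincar\'e criterion (Theorem \ref{main-orlicz}) and then read off Theorem \ref{thm-logS} by the asymptotics of $\Phi^{-1}$. The proof of Theorem \ref{main-orlicz} never compares $\mathscr{E}$ to a local energy. For sufficiency, it expands the Orlicz norm as a supremum $\|f^2\|_{\Phi}=\sup_{g\in\mathscr{G}}\int f^2 g\,d\mu$, observes that for each $g$ the \emph{same} form $\mathscr{E}$ is the Dirichlet form of the time-changed stable process reversible for $\mu_g\propto g\,d\mu$, applies the Poincar\'e criterion of Theorem \ref{exp erg}/\cite[Theorem 1.3]{W23} to each $\mu_g$, and takes the supremum; the finiteness of $\delta(\Phi)$ is exactly what makes the resulting family of Poincar\'e constants uniform in $g$. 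For necessity, instead of test functions of the form $\mathbf{1}_{(-t,t)}+\text{tail}$, it uses the Green-potential test functions $u_g^{x_0\pm}(x)=\int G(x,y)h^{x_0\pm}(y)g(y)\,\mu(dy)$ together with the monotonicity of $G$ (Proposition \ref{G-decrease}) and the identity $\mathscr{E}(u,u)=\langle u,h^{x_0\pm}\rangle_{\mu_g}$ to drive the Dirichlet eigenvalue to zero when $\delta(\Phi)=\infty$, contradicting \eqref{positive eigenvalue}. So you should replace Step 1 by the Orlicz-norm duality trick and replace your necessity test functions by the Green-potential ones; both substitutions rely on the Green function work you already anticipate, but deployed directly inside the nonlocal form rather than through an intermediate local Hardy inequality.
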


Next, we consider the following  {\bf Nash inequality}:
\begin{equation}\label{Nash}
	\|f-\mu(f)\|_{L^2(\mu)}^{2+4 / \epsilon} \leqslant A_N^{-1}\mathscr{E}(f,f)\|f\|_{L^1(\mu)}^{4 / \epsilon}
\end{equation}
for some constants $\epsilon,A_N>0$.
It is worth noting that according to \cite{cmf99}, the Nash inequality is equivalent to
$$\left\|P_t-\pi\right\|_{L^p(\mu) \rightarrow L^q(\mu) } \leqslant\left(\frac{\epsilon}{2 \eta_2 t}\right)^{\epsilon / 2}\quad \text{for all } t>0,\ \frac{1}{p}+\frac{1}{q}=1,$$
where $\|\cdot\|_{L^p(\mu) \rightarrow L^q(\mu)}$ represents the operator norm from $L^p(\mu)$ to $L^q(\mu)$, and $\eta_2>0$ is a constant.
Furthermore, as note in \cite[Chapter 8]{cmf05}, the Nash inequality is also equivalent to the following algebraic convergence for $P_t$:
\begin{equation*}\label{alger conver}
	\mathrm{Var}(P_tf)\leqslant\left(\frac{\epsilon}{4C_N}\right)^{\epsilon/2}\|f\|_{L^1(\mu)}^2 t^{-\epsilon/2}.
\end{equation*}
Additionally, it follows that the Nash inequality implies the logarithmic Sobolev inequality and the strong ergodicity of the process.

\begin{theorem}[Nash inequality]\label{thm-Nash}
	For the process $Y$, the Nash inequality \eqref{Nash} holds for some $\epsilon>2$ if and only if $$\sup_{x\in\mathbb{R}}\left\{|x|^{\alpha-1}\mu((-|x|,|x|)^c)^{(\epsilon-2)/\epsilon}\right\}<\infty.$$
\end{theorem}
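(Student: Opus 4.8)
The plan is to mirror the strategy used for the logarithmic Sobolev inequality in Theorem~\ref{thm-logS}, but now translated into the language appropriate for Nash inequalities. The starting point is the observation that the Nash inequality \eqref{Nash} for the Dirichlet form $(\mathscr{E},\mathscr{F})$ is equivalent to a super-Poincar\'e-type inequality, or more directly, can be characterized via a capacity/Hardy-type condition on the half-lines, exactly as in the one-dimensional theory recorded in \cite[Chapter~8]{cmf05}. Concretely, since $\alpha\in(1,2)$ and $\mu(\mathbb{R})=1$ with $\mu$ symmetric for $Y$, one uses the known estimates for the Green function of the killed process on intervals $(-r,r)$ (the same Green-function estimates announced in the abstract and used for the Poincar\'e criterion in Theorem~\ref{exp erg}) to reduce the functional inequality to a purely one-dimensional weighted inequality on $\mathbb{R}$ involving the tail function $F(r):=\mu((-r,r)^c)$. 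The Nash inequality with parameter $\epsilon$ corresponds, in the classical birth–death / diffusion dictionary, to finiteness of a quantity of the form $\sup_r r^{\alpha-1} F(r)^{(\epsilon-2)/\epsilon}$, which is precisely the stated constant.

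The key steps, in order, are as follows. First, reformulate \eqref{Nash} in its dual/isoperimetric form: for Dirichlet forms, the Nash inequality $\|f-\mu(f)\|_{L^2(\mu)}^{2+4/\epsilon}\le A_N^{-1}\mathscr{E}(f,f)\|f\|_{L^1(\mu)}^{4/\epsilon}$ is equivalent to a uniform bound of the type $\mu(A)^{1-2/\epsilon}\le c\,\capp(A)$ over measurable sets $A$ (a Faber–Krahn / Hardy-type inequality), where $\capp$ is the capacity associated with $(\mathscr{E},\mathscr{F})$; this equivalence, with control of constants up to universal factors, is standard (Carlen–Kusuoka–Stroock, Coulhon, or \cite[Chapter~8]{cmf05}). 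Second, using the structure of the jump kernel $C_\alpha|x-y|^{-1-\alpha}$ and the Green function estimates for the $\alpha$-stable process time-changed by $a$, show that for the relevant test sets — essentially complements of intervals $(-r,r)^c$ and their translates — the capacity $\capp((-r,r)^c)$ is comparable to $r^{1-\alpha}$ (up to constants depending only on $\alpha$), which is where the exponent $\alpha-1$ enters and where the hypothesis $\alpha\in(1,2)$ is essential so that $r^{1-\alpha}\to 0$. Third, combine these two reductions: the Faber–Krahn inequality restricted to this family of sets becomes $F(r)^{1-2/\epsilon}\lesssim r^{1-\alpha}$, i.e. $\sup_r r^{\alpha-1}F(r)^{(\epsilon-2)/\epsilon}<\infty$, giving necessity; for sufficiency one shows this one-parameter condition is in fact enough to recover the full Faber–Krahn inequality for all sets, by a standard layer-cake / slicing argument over the nested intervals, exploiting monotonicity of $F$ and the fact that any set can be compared to an interval of the same measure.

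For the sufficiency direction I would, more concretely, run the Orlicz-space machinery advertised in the abstract: identify the Young function $N$ associated with the $L^{2+4/\epsilon}$-vs-$L^1$ interpolation, express \eqref{Nash} as a bound of the $L^2$-norm by a combination of the Orlicz norm and the Dirichlet energy, and then verify the corresponding Muckenhoupt–Hardy condition on $(0,\infty)$ for the pair of measures $(\mu, \text{Lebesgue weighted by } |x|^{-1-\alpha}\text{-type kernel})$. The point is that the split $\mathscr{E}(f,f)\asymp \int\int (f(x)-f(y))^2|x-y|^{-1-\alpha}\,dx\,dy$ allows the same "near-diagonal vs. far-from-diagonal" decomposition used in \cite{W23, CW14}: the near-diagonal part is controlled by the Sobolev embedding of the pure $\alpha$-stable form (independent of $\mu$), and the far part is governed exactly by the tail quantity $F$. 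Assembling these gives the Nash inequality with the claimed range $\epsilon>2$ (the constraint $\epsilon>2$ being forced because $\|f-\mu(f)\|^{2+4/\epsilon}_{L^2}$ must dominate with a positive power of $\|f\|_{L^1}$, and because $F(r)^{(\epsilon-2)/\epsilon}$ must have a nonnegative exponent for the supremum to be finite and meaningful).

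The main obstacle I anticipate is the second step: obtaining two-sided capacity estimates $\capp((-r,r)^c)\asymp r^{1-\alpha}$ (and, more importantly, capacity estimates for general sets in terms of intervals of equal $\mu$-measure) that are uniform in $r$ and have explicit dependence on $\alpha$. The upper bound is easy via a test function that is $1$ off $(-2r,2r)$ and linearly interpolates; the lower bound requires the Green-function lower estimates for the time-changed stable process and a careful argument that, because the jump kernel is long-range, removing a bounded interval still costs energy of order $r^{1-\alpha}$ — this is delicate precisely because of the nonlocality, and it is the place where the one-dimensional diffusion proof does not transfer verbatim and the Green-function analysis of the present paper is needed. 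Once that capacity comparison is in hand, the reduction to the scalar condition on $F$ and the equivalence with the stated supremum is routine.
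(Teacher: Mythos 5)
Your proposal takes a genuinely different route from the paper, and it has a gap that the paper's route is specifically designed to avoid. The paper does not go through a Faber--Krahn/capacity characterization at all. Its entire proof of Theorem~\ref{thm-Nash} is: (i) record (from \cite[Section~6.5]{cmf05}) that for $\epsilon>2$ the Nash inequality \eqref{Nash} is equivalent to the Sobolev-type bound $\|f-\mu(f)\|_{L^{2\epsilon/(\epsilon-2)}(\mu)}^2\leqslant C\,\mathscr{E}(f,f)$, i.e.\ to the Orlicz--Poincar\'e inequality \eqref{orlicz-poincare} with $\Phi(x)=|x|^{r}/r$, $\Psi(x)=\Phi(x^2)=|x|^{2r}/r$, $r=\epsilon/(\epsilon-2)$; (ii) invoke Theorem~\ref{main-orlicz}, which says \eqref{orlicz-poincare} holds iff $\delta(\Phi)=\sup_x |x|^{\alpha-1}/\Phi^{-1}\big(1/\mu((-|x|,|x|)^c)\big)<\infty$; (iii) compute $\Phi^{-1}(t)=(rt)^{1/r}$ to turn $\delta(\Phi)<\infty$ into the stated criterion. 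All the hard analysis (Green-function monotonicity, the lower bound $G(x,y)\geqslant K_\alpha h(x)$, the harmonic function $h$) lives in the proof of Theorem~\ref{main-orlicz}, which is proved once and used for the log-Sobolev, Nash, and interpolation criteria alike; its sufficiency half reduces the Orlicz--Poincar\'e inequality, by Orlicz-norm duality over $g\in\mathscr{G}$, to the Poincar\'e inequality for the process absorbed at $0$ under weighted measures $\mu_{g_n}$, which is then supplied by \cite[Theorem~1.3]{W23}.

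The gap in your proposal is the step you label as routine: passing from a Faber--Krahn-type inequality tested on the special sets $(-r,r)^c$ to the same inequality for arbitrary measurable sets ``by a standard layer-cake / slicing argument \ldots comparing any set to an interval of the same measure.'' For a local one-dimensional Dirichlet form this symmetrization step is indeed classical, but for the nonlocal form $\mathscr{E}(f,f)=\tfrac12 C_\alpha\iint (f(x)-f(y))^2|x-y|^{-1-\alpha}\,\d x\,\d y$ it is not at all standard: the long-range jumps mean the energy of a general set is not controlled by that of an interval with the same $\mu$-measure, and no rearrangement inequality in the required two-measure form ($\mu$ on one side, the jump kernel on the other) is invoked or available here. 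This is precisely the difficulty the paper's duality argument sidesteps, since the reduction to the absorbed-process Poincar\'e constant from \cite{W23} never requires comparing arbitrary sets to intervals. Your secondary sketch (``Orlicz-space machinery \ldots verify the corresponding Muckenhoupt--Hardy condition'') is pointing in the right direction conceptually, but as written it conflates the Orlicz--Poincar\'e inequality that the paper actually proves with a Muckenhoupt-type weight condition that is not what Theorem~\ref{main-orlicz} establishes, and it does not identify the key intermediate object, namely the weighted Poincar\'e inequality for $\mathcal{L}_{g_n}$ with $g$ ranging over the dual unit ball $\mathscr{G}$. The capacity heuristics are sound (the exponent $\alpha-1$ does come from $h(x)\sim |x|^{\alpha-1}/(\alpha-1)$ and the Green-function lower bound), but as a proof the plan as stated does not close.
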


We now consider the super-Poincar\'e inequality, which was introduced in \cite{W00} to study the essential spectrum. Specifically, we say $(\mathscr{E},\mathscr{F})$ satisfies the {\bf super-Poincar\'e inequality} if there exists a non-increasing rate function $\beta: (0, \infty)\rightarrow(0,\infty)$ such that for all $f\in \mathscr{F}$ and all $r>0$,
\begin{equation}\label{super poincare}
	\mu\left(f^2\right) \leqslant r \mathscr{E}(f, f)+\beta(r) \mu(|f|)^2.
\end{equation}
By \cite[Theorem 1.2]{GW02}, it can be observed that the super-Poincar\'e inequality \eqref{super poincare} is equivalent to the empty essential spectrum and the compactness of the semigroup $(P_t)_{t\geqslant 0}$. Additionally, it is worth mentioning that it is also equivalent to the following $F$-Sobolev inequality (see \cite[Section 3.3.1]{wfy05}):
\begin{equation}\label{F-Sobolev}
	\mu(f^2F(f^2))\leqslant C_1\mathscr{E}(f,f)+C_2,\quad f\in\mathscr{F},\ \mu(f^2)=1,
\end{equation}
where $F\in C(0,\infty)$ is an increasing function such that $\sup_{r\in(0,1]}|rF(r)|<\infty$ and $\lim_{x\rightarrow \infty}F(x)=\infty$, and $C_1>0$,  $C_2\geqslant0$ are two constants. Indeed, the function $F$ and the rate function $\beta $ can be represented by each other, see \cite[Theorems 3.3.1 and 3.3.13]{wfy05}.
Especially, when $F(r)=\log r$ (equivalently, $\beta(r)=\e^{c(1+r^{-1})}$ for some $c>0$), the $F$-Sobolev inequality (equivalently, the super-Poincar\'e inequality) is equivalent to the hyperboundedness of the semigroup $(P_t)_{t\geqslant 0}$: 
	$$\|P_t\|_{L^2(\mu)\rightarrow L^4(\mu)}<\infty\quad \text{for some}\ t>0,$$
and the logarithmic Sobolev inequality (see \cite[Page 2818]{CW14}). This implies that the logarithmic Sobolev inequality is stronger than the super-Poincar\'e inequality.

We also note that the super-Poincar\'e inequality \eqref{super poincare} is equivalent to the following Nash-type inequality:
\begin{equation}\label{Nash-type}
	\mu(f^2)\leqslant\theta(\mathscr{E}(f,f)),\quad f\in\mathscr{F},\ \mu(|f|)=1,
\end{equation}
where $\theta$ is  a increasing positive function on $[0,\infty)$ with $\theta(r)/r\rightarrow0$ as $r\rightarrow\infty$. Similarly, $\theta$ in \eqref{Nash-type} and $\beta$ in \eqref{super poincare} can be expressed by each other. See \cite[Proposition 3.3.16]{wfy05} for more details.
In particular, if  $\theta(r)$ is chosen as a special function, then the  super-Poincar\'e inequality \eqref{super poincare} is reduced to  the Nash inequality \eqref{Nash}. See \cite[Section 3.3.2]{wfy05}. Thus we can see that the  Nash inequality is stronger than super-Poincar\'e inequality.

We now have the following explicit criterion for the super-Poincar\'e inequality of the process $Y$.

\begin{theorem}[Super-Poincar\'e inequality]\label{SPI}
	For the process $Y$, the super-Poincar\'e inequality \eqref{super poincare} holds  for some $\beta$ ( equivalently, $F$-Sobolev inequality holds for some $F$ or Nash-type inequality holds for some $\theta$) if and only if
	\begin{equation}\label{deltatilde}
		\lim_{n\rightarrow\infty}\sup_{|x|>n}\left\{(|x|-n)^{\alpha-1}\mu((-|x|,|x|)^c)\right\}=0.
	\end{equation}
\end{theorem}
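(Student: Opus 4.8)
The plan is to mirror the strategy already used for the Poincar\'e and logarithmic Sobolev inequalities, namely reduce the super-Poincar\'e inequality for the Dirichlet form $(\mathscr{E},\mathscr{F})$ to a capacity/Orlicz-type estimate on half-lines and then translate that estimate into the tail condition \eqref{deltatilde}. Concretely, I would first recall from \cite{wfy05} the general equivalence between the super-Poincar\'e inequality \eqref{super poincare} and the Nash-type inequality \eqref{Nash-type}, and the well-known criterion that \eqref{super poincare} holds for some $\beta$ if and only if the embedding $\mathscr{F}\hookrightarrow L^2(\mu)$ is compact, equivalently the essential spectrum of the associated generator is empty. This turns the problem into a ``concentration at infinity'' question: the form is essentially a fixed non-local stable form, so compactness can fail only through mass escaping to $\pm\infty$, and the weight enters only through $\mu$.

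Next I would set up the one-sided reduction. As in \cite{W23} and in the proof of Theorem \ref{exp erg}, one splits $\mathbb{R}$ at the origin and works on $[0,\infty)$ and $(-\infty,0]$ separately; for a function $f$ supported on $\{|x|>n\}$ one uses the lower bound
\begin{equation*}
	\mathscr{E}(f,f)\geqslant c\int_{|x|>n}\frac{f(x)^2}{(|x|-n)^{\alpha}}\,\d x,
\end{equation*}
which comes from comparing the stable kernel $C_\alpha|x-y|^{-1-\alpha}$ against the one-dimensional fractional Hardy inequality on the half-line $\{|x|>n\}$ with its boundary at $|x|=n$ (the constant being controlled by $\omega_\alpha$ in \eqref{omegaalpha}). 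Combining this Hardy-type bound with the elementary inequality $\mu(f^2)\leqslant \big(\sup_{|x|>n}(|x|-n)^{\alpha}\mu_{\mathrm{loc}}\big)\cdot(\text{local Hardy term})+(\text{bulk term on }|x|\le n)$, and noting $\mu((-|x|,|x|)^c)$ is the natural tail functional appearing when one integrates $(|x|-n)^{-\alpha}$ against $\mu$, I would obtain: the super-Poincar\'e inequality holds for some $\beta$ iff $\sup_{|x|>n}\{(|x|-n)^{\alpha-1}\mu((-|x|,|x|)^c)\}\to 0$ as $n\to\infty$, i.e.\ \eqref{deltatilde}. For the converse (necessity), given \eqref{deltatilde} fails, there is $\eps_0>0$ and points $x_n$ with $|x_n|\to\infty$ and $(|x_n|-n)^{\alpha-1}\mu((-|x_n|,|x_n|)^c)\geqslant\eps_0$; I would build a sequence of test functions $f_n$ supported near $x_n$ (tent functions on intervals $(n,|x_n|)$ or their reflections) that are $L^2(\mu)$-normalized, have $\mu(|f_n|)\to 0$, and have $\mathscr{E}(f_n,f_n)$ bounded, contradicting \eqref{super poincare} with any fixed $r$ by choosing $r$ small — this is the standard ``escape of mass'' obstruction to compactness.

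The main obstacle I anticipate is making the two-sided tail functional $\mu((-|x|,|x|)^c)$ — rather than a genuinely one-sided quantity — come out cleanly from a nonlocal form whose jumps connect the two half-lines. The fractional Hardy inequality I want is localized to one half-line, but the kernel $|x-y|^{-1-\alpha}$ couples $\{x>n\}$ with $\{x<-n\}$ and with $\{|x|\le n\}$; one must argue that discarding the cross terms is harmless for the upper bound on $\mathscr{E}$ and, more delicately, that the tail $\mu((-|x|,|x|)^c)$ (symmetric in $x$) is the right quantity because the stable process is symmetric and can jump from far right to far left in one step, so the ``obstacle seen from $x_n$'' is the complement of a symmetric interval. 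I expect this to be handled exactly as in \cite{W23} via the Green-function / capacity estimates of the killed stable process on intervals, with the rate function $\beta$ reconstructed from the modulus $\omega(n):=\sup_{|x|>n}\{(|x|-n)^{\alpha-1}\mu((-|x|,|x|)^c)\}$ through the Orlicz-space machinery of \cite{wfy05}; converting $\omega(n)\to 0$ into the existence of an admissible non-increasing $\beta:(0,\infty)\to(0,\infty)$ (and the matching $F$ and $\theta$) is then a routine but slightly technical unwinding of \cite[Section 3.3]{wfy05} that I would carry out last.
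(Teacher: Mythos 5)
There is a genuine gap in both directions of the proposed argument, and it is the same gap in each: your plan tests the \emph{pointwise density} $a(x)^{-1}$ where the theorem's condition involves the \emph{integrated tail} $\mu\big((-|x|,|x|)^c\big)$, and it is the Green-function machinery of Section~3 that converts one into the other. On the sufficiency side, the ``elementary inequality'' you propose, $\mu(f^2)\leqslant\big(\sup_{|x|>n}(|x|-n)^\alpha a(x)^{-1}\big)\cdot\int_{|x|>n}f^2/(|x|-n)^\alpha\,\d x+(\text{bulk term})$, would indeed follow from a Hardy inequality, but it would prove the theorem under the hypothesis $\sup_{|x|>n}(|x|-n)^\alpha a(x)^{-1}\to 0$, which is a pointwise weight condition, not the integrated tail condition \eqref{deltatilde}. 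The correct comparison requires an integration by parts against the Green kernel of the killed process, which is exactly what Theorem~\ref{upper bound} supplies (the bound $G^{[-n,n]}f_n\leqslant\frac{4c_\alpha\delta_n}{\alpha-1}f_n$ with $\delta_n$ a tail functional); the paper then feeds this Lyapunov-type estimate (Lemma~\ref{lemma2}) and a part-form comparison (Lemma~\ref{lemma1}, itself a Hardy--Rellich inequality used to control the killing terms, not to control $\mu(f^2)$ directly) into the abstract criterion of \cite{st05}. You also drop the bulk term $\{|x|\leqslant n\}$ without explaining how to handle it; without the local super-Poincar\'e inequality from \cite{CW14} you would at best obtain a weighted Hardy bound, never a genuine super-Poincar\'e inequality with an admissible $\beta$.

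On the necessity side, tent functions are the wrong witnesses: a tent of height $1$ and width $\ell$ supported near $x_n$ has $\mathscr{E}$-energy $\asymp\ell^{1-\alpha}$ and $L^2(\mu)$-norm governed by $a^{-1}$ on its support, so the quantity it detects is again a local density ratio, not $\mu\big((-|x|,|x|)^c\big)$. To make the \emph{tail} appear, the paper instead uses truncations $h^{x_0\pm}$ of the harmonic function $h$ from \eqref{def-h} together with the Green-potential identity \eqref{dirichlet}, producing an upper bound on the Dirichlet eigenvalue $\lambda_0([-n,n]^c)$ of the form $(\alpha-1)/\big(K_\alpha\widetilde{\delta}_n^\pm\big)$, and combines it with the general fact (from \cite[Theorem~3.4.1]{wfy05}) that super-Poincar\'e forces $\lambda_0([-n,n]^c)\to\infty$. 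Your escape-of-mass plan could be repaired by replacing the tent functions with exactly these $h$-based potentials, but as written it would not see the tail functional and would therefore not contradict the right hypothesis. In short, the high-level narrative (split bulk from tail, Hardy for the tail, compactness/escape-of-mass for necessity) is sound, but both the sufficiency and the necessity arguments need the Green-function estimates of Section~3 to turn pointwise weights into tail integrals; without them the sketch does not close.
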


Finally, we study functional inequalities with so-called additivity property which interpolate the Poincar\'e inequality \eqref{Poincare} and the logarithmic Sobolev inequality \eqref{Log-Sobolev}, i.e. consider the following functional inequality:
	\begin{equation}\label{interpolation}
		\sup _{p \in[1,2)} \frac{\mu\left(f^2\right)-\mu\left(|f|^p\right)^{2 / p}}{\phi(p)} \leqslant \mathscr{E}(f, f), \quad f \in \mathscr{D}(L),
	\end{equation}
	where $\phi \in C([1,2])$ is a decreasing function and  strictly positive in $[1,2)$. By \cite[Chapter 6]{wfy05}, when $\phi \equiv 1$, \eqref{interpolation} reduces to the Poincar\'e inequality \eqref{Poincare}, and when $\phi(p):=2 C(2-p) / p$, \eqref{interpolation} coincides with the logarithmic Sobolev inequality \eqref{Log-Sobolev}. That is why we say \eqref{interpolation} is a generalization of the Poincar\'e  and  the logarithmic Sobolev inequality.
In addition, \eqref{interpolation} is also  closely related to the super-Poincar\'e inequality \eqref{super poincare}. Indeed, from \cite[Corollary 6.2.2]{wfy05}, the functional inequality \eqref{interpolation} holds with $\phi(p)=C(2-p)^\xi$ for some $C>0$ and $\xi\in(0,1]$, if and only if the super-Poincar\'e inequality \eqref{super poincare} holds for $\beta(r)=\e^{cr^{-1/\alpha}}$ ($c>0$), or the $F$-Sobolev inequality \eqref{F-Sobolev} holds for $F(r)=(\log^+r)^\alpha$.
	\begin{theorem}[Interpolations of Poincar\'e and logarithmic Sobolev inequalities]
\label{interp}
		The functional inequality \eqref{interpolation} holds with $\phi(p)=C(2-p)^\xi$ for some $\xi\in(0,1]$ and $C>0$  if and only if
		$$\sup\limits_{x\in\mathbb{R}}\left\{|x| ^{\alpha-1}\mu((-|x|,|x|)^c)\log^\xi(\mu((-|x|,|x|)^c)^{-1})\right\}<\infty.$$
	\end{theorem}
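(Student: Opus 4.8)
The plan is to exploit the equivalences already recorded in the excerpt to reduce the claim to a statement about the super-Poincaré inequality, and then to prove the latter by the same Orlicz-space / Green-function machinery that underlies Theorems 1.4--1.7. Concretely, by \cite[Corollary 6.2.2]{wfy05} the functional inequality \eqref{interpolation} with $\phi(p)=C(2-p)^\xi$ is equivalent to the super-Poincaré inequality \eqref{super poincare} with rate function $\beta(r)=\e^{c r^{-1/\xi}}$ for some $c>0$ (equivalently the $F$-Sobolev inequality with $F(r)=(\log^+ r)^\xi$). So it suffices to show that this particular super-Poincaré inequality holds if and only if
$$
D_\xi:=\sup_{x\in\mathbb{R}}\left\{|x|^{\alpha-1}\mu((-|x|,|x|)^c)\log^\xi\!\big(\mu((-|x|,|x|)^c)^{-1}\big)\right\}<\infty.
$$
Note the consistency check: $\xi=1$ recovers the logarithmic Sobolev criterion $\delta_{LS}<\infty$ of Theorem~\ref{thm-logS} (since then $\beta(r)=\e^{c/r}$, which by the discussion after \eqref{Nash-type} is exactly the log-Sobolev case), and letting $\xi\to 0$ formally recovers the Poincaré criterion $\delta<\infty$ of Theorem~\ref{exp erg}.

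The mechanism I would use is the standard one-dimensional capacitary / Hardy-type criterion adapted to the Dirichlet form \eqref{Diri form}. First, since $\mu(\mathbb R)=1<\infty$ and $\alpha\in(1,2)$, one localizes: split $\mathbb R$ into $(-\infty,0]$ and $[0,\infty)$ and work on each half-line, reducing matters to a weighted one-dimensional problem where the relevant ``capacity'' of an interval $(-|x|,|x|)^c$ behaves like $|x|^{\alpha-1}$ (this exponent is precisely what produces the $|x|^{\alpha-1}$ factor throughout Theorems~\ref{exp erg}--\ref{interp}, and comes from the Green function estimates of the killed symmetric $\alpha$-stable process). Then I would invoke the abstract characterization of the $F$-Sobolev inequality in terms of a weighted Hardy inequality with Orlicz modulus --- the Young function here being (essentially) $t\mapsto t\log^\xi(1+t)$ --- for which the Muckenhoupt-type two-weight criterion states: the inequality holds iff
$$
\sup_{x}\ \Big(\text{``capacity of }(-|x|,|x|)^c\text{''}\Big)\cdot \Phi^{-1}\!\Big(1/\mu((-|x|,|x|)^c)\Big)^{-1}<\infty
$$
up to constants, where $\Phi$ is the Young function conjugate to the growth of $F$; with $F(r)=(\log^+r)^\xi$ one computes that the relevant inverse modulus evaluated at $1/\mu(\cdots)$ is comparable to $\mu(\cdots)\log^{-\xi}(1/\mu(\cdots))$, which upon rearranging yields exactly the stated quantity $D_\xi$. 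The sufficiency direction is then a direct estimate: assuming $D_\xi<\infty$, one tests \eqref{super poincare}/\eqref{F-Sobolev} on an arbitrary $f$ by truncation at levels $\pm n$, bounds the ``tail'' contribution $\int_{(-n,n)^c} f^2 F(f^2)\,\d\mu$ using $D_\xi<\infty$ together with the elementary inequality $s F(s)\le \epsilon s\log s + C_\epsilon$, and controls the energy of the truncated piece by $\mathscr{E}(f,f)$ via the Green function bound. The necessity direction is obtained by plugging in explicit test functions --- for each $x$, a function that is $1$ on $(-|x|,|x|)$, decays on an annulus, and vanishes far away, tuned so that $\mathscr{E}(f,f)\asymp |x|^{\alpha-1}$ while $\mu(f^2)\asymp 1$ and $\mu(|f|)\asymp \mu((-|x|,|x|)^c)$ contribute the logarithmic factor --- and reading off that finiteness of the super-Poincaré constant forces $D_\xi<\infty$.

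The main obstacle I anticipate is making the Orlicz-space duality bookkeeping precise: one must identify exactly which Young function / modulus of continuity corresponds to $\phi(p)=C(2-p)^\xi$ through the chain $\phi \leftrightarrow \beta \leftrightarrow F$, and then verify that the resulting weighted Hardy inequality on the half-line with the nonlocal quadratic form \eqref{Diri form} admits the claimed Muckenhoupt-type criterion with the correct power $|x|^{\alpha-1}$. The nonlocality of $\mathscr E$ means the naive one-dimensional Hardy reduction is not literal; one instead compares $\mathscr E(f,f)$ with the Dirichlet form of the \emph{killed} process on $(-|x|,|x|)$ and uses the two-sided Green function estimates (the same ones behind \cite{W23} and Theorems~\ref{thm-logS}--\ref{SPI}) to transfer the capacity computation. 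Once those Green-function estimates are in hand, the remaining work is the Orlicz-norm manipulation, which is the technical heart but is routine given \cite[Chapters 3 and 6]{wfy05}. I would also double-check the endpoint $\xi=1$ against Theorem~\ref{thm-logS} as a sanity test, and note that uniform (in $p$) control is automatic from the ``additivity property'' of \eqref{interpolation}, so no separate argument for the supremum over $p\in[1,2)$ is needed beyond what \cite[Corollary 6.2.2]{wfy05} already provides.
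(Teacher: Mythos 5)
Your high-level plan is in the right spirit: reduce via \cite[Corollary~6.2.2]{wfy05}, identify the Orlicz modulus $t\mapsto t\log^\xi(1+t)$, and phrase the criterion as a supremum of $|x|^{\alpha-1}/\Phi^{-1}(1/\mu((-|x|,|x|)^c))$. The paper does the same reduction, but it goes to the \emph{Orlicz--Poincar\'e inequality} \eqref{orlicz-poincare} (rather than to the super-Poincar\'e / $F$-Sobolev form) and then simply applies Theorem~\ref{main-orlicz} with $\Phi(x)=|x|\log^\xi(1+|x|)$, after which the only work is verifying $\Phi^{-1}(t)\asymp t/\log^\xi t$ for $t\geqslant2$ (the paper's \eqref{lemma}). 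Since Theorem~\ref{main-orlicz} already packages the Green-function and Orlicz machinery once and for all, the paper's proof of Theorem~\ref{interp} is a two-line specialization. Your proposal, by contrast, tries to re-derive the equivalent of Theorem~\ref{main-orlicz} from scratch under a ``Muckenhoupt-type'' heading, which is a longer road.

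There is also a concrete error in your necessity sketch. You propose test functions that are $1$ on $(-|x|,|x|)$ and decay outside, ``tuned so that $\mathscr{E}(f,f)\asymp |x|^{\alpha-1}$ while $\mu(f^2)\asymp1$ and $\mu(|f|)\asymp\mu((-|x|,|x|)^c)$.'' These three asymptotics cannot hold simultaneously. For such a bump, since $\mu$ is a probability measure concentrated near $0$, one gets $\mu(|f|)\geqslant\mu((-|x|,|x|))\to1$, not $\asymp\mu((-|x|,|x|)^c)$; and for the nonlocal form \eqref{Diri form} the stable-capacity scaling of a ball of radius $R$ in dimension one is $R^{1-\alpha}$, so $\mathscr{E}(f,f)\asymp|x|^{1-\alpha}\to0$, not $|x|^{\alpha-1}$. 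To extract necessity one in fact needs functions \emph{supported outside} a ball, built from the killed Green function $G^{[-1,1]}_X$ so that the lower bound $G(x,y)\geqslant K_\alpha h^{x_0+}(x)$ from \eqref{min+} can be exploited; this is exactly what the paper's Theorem~\ref{main-orlicz} does with the test functions $u_g^{x_0\pm}$, not with cut-offs. Without that Green-function construction, the crude bump-function computation does not force $D_\xi<\infty$.

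Finally, your sufficiency sketch (truncation, tail estimate with $sF(s)\leqslant\epsilon s\log s+C_\epsilon$) is plausible but does not reflect the paper's mechanism, which is to build from the weighted Poincar\'e inequality for the process absorbed at $0$ (inequality \eqref{weight1}, borrowed from \cite{W23}) and then take a supremum over the Orlicz dual ball $\mathscr{G}$; that step is what turns the scalar estimate into an Orlicz-norm estimate, and it is where the $N$-function duality actually enters. If you instead invoke Theorem~\ref{main-orlicz} directly (which is available earlier in the paper), all of these technical points collapse to the elementary inequality $\tfrac{t}{2\log^\xi t}\leqslant\Phi^{-1}(t)\leqslant\tfrac{2t}{\log^\xi t}$ plus boundedness of $|x|^{\alpha-1}\mu((-|x|,|x|)^c)\log^\xi(\mu((-|x|,|x|)^c)^{-1})$ on compacts. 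That is the route you should take.
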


\begin{remark}	
		It is worth mentioning that the above criteria are qualitatively sharp in the sense that they still hold true when $\alpha\rightarrow2$. This refers to the case of the time-changed Brownian motion, also known as diffusions without drifts on $\mathbb{R}$.
In this case, the Poincar\'e inequality holds if and only if
$$\sup _{x\in \mathbb{R}} \left\{|x| \mu ((-|x|, |x|)^c)\right\}<\infty.$$
Additionally, the logarithmic Sobolev inequality holds if and only if
		$$\sup_{x\in\mathbb{R}}\left\{|x| \mu((-|x|,|x|)^c)\log(\mu((-|x|,|x|)^c)^{-1})\right\}<\infty;$$
	 the Nash inequality holds if and only if
		$$\sup_{x\in\mathbb{R}}\left\{|x|\mu((-|x|,|x|)^c)^{(\epsilon-2)/\epsilon}\right\}<\infty;$$
	 the super-Poincar\'e inequality holds  if and only if
		\begin{equation*}
			\lim_{n\rightarrow\infty}\sup_{|x|>n}\left\{(|x|-n)\mu((-|x|,|x|)^c)\right\}=0;
		\end{equation*}
		 and the interpolation inequality \eqref{interpolation} holds if and only if
		$$\sup_{x\in\mathbb{R}}\left\{|x| \mu((-|x|,|x|)^c)\log^\xi(\mu((-|x|,|x|)^c)^{-1})\right\}<\infty;$$
Please refer to \cite[Table 5.1]{cmf05}, and \cite[Theorem 6.2.4]{wfy05} for additional information on one-dimensional diffusions.
\end{remark}

Just like \cite[Table 5.1]{cmf05} details one-dimensional diffusions, we have explicit criteria for various ergodic properties and functional inequalities for the process $Y$ with $\alpha\in (1,2)$, as listed in the following table. Note that the uniqueness and recurrence follow from the pointwise recurrence of time-changed stable processes, while the exponential ergodicity (i.e., Poincar\'e inequality) and strong ergodicity (i.e., $L^1$-exponential convergence) are proven in \cite{W23}. The other functional inequalities are completed in this paper.
$$
\begin{array}{|c|c|}
	\hline \text {Property} & \text {Criterion} \\
	\hline \text { Uniqueness} &{ \checkmark}\\
	\hline \text { Recurrence} & {\checkmark}\\
	\hline \text { Ergodicity} & { \mu(\mathbb{R})<\infty.} \\
	\hline \begin{array}{c}
		\text { Exponential ergodicity} \\
		{\text{ (Poincar\'e inequality)}}
			\end{array} & {  \sup\limits_{x\in\mathbb{R}}\left\{ |x|^{\alpha-1}\mu((-|x|, |x|)^c)\right\} <\infty} \\
	\hline \begin{array}{c}\text {Super-Poincar\'e inequality}\\{\text{(discrete spectrum)}}\\\text{($F$-Sobolev inequality)}\\ \text{(Nash-type inequality)}	\end{array} & \lim\limits_{n \rightarrow \infty} \sup\limits_{|x|>n}\left\{((|x|-n)^{\alpha-1}) \mu((-|x|,|x|)^c)\right\} =0 \\
	\hline \begin{array}{c}
		\text {Log-Sobolev inequality }
	\end{array} & {\sup\limits_{x\in\mathbb{R}}\left\{|x| ^{\alpha-1}\mu((-|x|,|x|)^c)\log(\mu((-|x|,|x|)^c)^{-1})\right\}<\infty} \\
	\hline \begin{array}{c}
		\text { Strong ergodicity} \\
		{ (L^{1}\text {-exp.convergence}) }
	\end{array} & { \int_{\mathbb{R}} |x|^{\alpha-1}\mu(\d x)<\infty} \\
	\hline \begin{array}{c}
		\text {Nash inequality}
	\end{array} & {\sup\limits_{x\in\mathbb{R}}\left\{|x|^{\alpha-1}\mu((-|x|,|x|)^c)^{(\epsilon-2)/\epsilon}\right\}<\infty} \\
	\hline \begin{array}{c}
		\text { Interpolations of Poincar\'e} 	\\
		\text { and log-Sobolev inequalities}
	\end{array} & {	\sup\limits_{x\in\mathbb{R}}\left\{|x| ^{\alpha-1}\mu((-|x|,|x|)^c)\log^\xi(\mu((-|x|,|x|)^c)^{-1})\right\}<\infty} \\
	\hline
\end{array}
$$

To illustrate our main results, we have provided serveral examples. The proofs supporting the claims in these examples can be found in Section \ref{SPI-section}.

\begin{example}\label{polyno-functional}
Let $\alpha\in (1,2)$ and   $\sigma(x)=C_{\alpha,\gamma}(1+|x|)^{\gamma}$. Here $C_{\alpha,\gamma}:=2^{1/\alpha}(\alpha\gamma-1)^{-1/\alpha}$ is the normalizing constant so that $\mu (\d x)=\sigma(x)^{-\alpha}\d x$ is a probability measure. Then
	
\begin{itemize}
\item[\rm (1)] the Poincar\'e inequality \eqref{Poincare} holds if and only if $\gamma\geqslant1$;
	
 \item[\rm (2)] the super-Poincar\'e inequality \eqref{super poincare}, the logarithmic Sobolev inequality \eqref{Log-Sobolev} and the interpolation inequality \eqref{interpolation} holds if and only if $\gamma>1$. Moreover, in this case, by \cite[Example 1.3]{CW14}, the semigroup $(P_t)_{t\geqslant 0}$ is  ultracontractive:
 $$\|P_t\|_{L^1(\mu)\rightarrow L^\infty(\mu)}<\infty\quad \text{for any}\ t>0,$$
and by \cite[Corollary 1.7]{W23}, $(P_t)_{t\geqslant0}$ is $L^1$-exponentially convergent:
 $$\|P_t-\mu\|_{L^1(\mu)\rightarrow L^1(\mu)}\leqslant C_1\e^{-\kappa t}$$
 for some $C_1>0$ and $\kappa>0$;

\item[\rm (3)] the Nash inequality \eqref{Nash} holds if and only if $\gamma\geqslant(\alpha\epsilon-2)/(\alpha(\epsilon-2))$.
\end{itemize}
\end{example}

\begin{example}\label{ex-log}
Let $\alpha \in (1,2)$ and  $\sigma(x)=C_{\alpha,\gamma}(1+|x|)\log^\frac{\gamma}{\alpha}(e+|x|)$ with $\gamma\in \mathbb{R}$ such that $\int_{\mathbb{R}}\sigma(x)^{-\alpha}\d x=1$. Then

\begin{itemize}
\item[\rm (1)] the Poincar\'e inequality \eqref{Poincare} holds if and only if $\gamma\geqslant 0$;
	
 \item[\rm (1)] the super-Poincar\'e inequality \eqref{super poincare} holds if and only if $\gamma>0$;

\item[\rm (3)] the  interpolation inequality \eqref{interpolation} holds for $\xi\in(0,1]$ if and only if $\gamma\geqslant\xi$;

\item[\rm (4)] the logarithmic Sobolev inequality \eqref{Log-Sobolev} holds if and only if $\gamma\geqslant1$;
	
\item[\rm (5)] the Nash inequality \eqref{Nash} does not holds for all $\gamma \in \mathbb{R}$.

\end{itemize}
\end{example}
\begin{remark}
	 Note that for the above two examples,  Poincar\'e, super-Poincar\'e and logarithmic Sobolev inequalities  have been proven by \cite[Examples 1.3 and  1.4]{CW14} by using Lyapunov functions. These results can now be directly derived from our theorems.
\end{remark}




The remainder of this paper is organized as follows.
In Section \ref{symmetric}, we prove Theorem \ref{symmetric operator}.
Section \ref{monotonicity} is devoted to some important lemmas concerning Green functions of killed stable processes, which will play a critical role in the proof of our main results. 
In Section \ref{Orlicz-section}, we derive the Orlicz-Poincar\'e inequalities for time-changed symmetric stable processes. This derivation allows us to establish the logarithmic Sobolev inequalities (Theorem \ref{thm-logS}), the Nash inequalities (Theorem \ref{thm-Nash}) and the  interpolation inequality (Theorem \ref{interp}) in Section \ref{proofs}.  Finally, in Section \ref{SPI-section}, we provide a proof for the super-Poincar\'e inequalities (Theorem \ref{SPI}).

\section{Symmetry for stable L\'evy-type operators}\label{symmetric}

\ \ \ \ To prove Theorem \ref{symmetric operator}, we begin with a few auxiliary results.  Consider a L\'{e}vy-type process $Z=(Z_t)_{t\geqslant0}$ on $\mathbb{R}^n$ with generator denoted by $\mathcal{L}_1$, which domain characterized by $\mathcal{D}(\mathcal{L}_1)$.
We assume  that the set of smooth functions with compact support, $C_0^\infty(\mathbb{R}^n)$, is a subset of $\mathcal{D}(\mathcal{L}_1)$. Consequently, the generator exhibits the following integro-differential expression:
 $$
 \begin{aligned}
\mathcal{L}_1 f(x)= & b(x) \cdot \nabla f(x)+\frac{1}{2} \operatorname{tr}\left(Q(x) \cdot \nabla^2 f(x)\right) \\
& +\int_{\mathbb{R}^n\setminus\{0\}}\left(f(x+y)-f(x)-\nabla f(x) \cdot y \mathbf{1}_{\{|y|\leqslant1\}}\right) \nu(x, d y),\quad f\in C_0^\infty(\mathbb{R}^n),
\end{aligned}$$
where $b: \mathbb{R}^n\rightarrow\mathbb{R}^n$ represents a measurable function, $\operatorname{tr} (A)$ denotes the trace of matrix $A$, and $Q(x)=(Q_{ij}(x))_{n\times n}$ is a nonnegative definite matrix for all $x\in \mathbb{R}^n$. Additionally,  $\nabla^2f$ represents the Hessian matrix of $f$, and for any  $x\in\mathbb{R}^n$, $\nu(x,\d y)$ is a nonnegative $\sigma$-finite measure on $\mathbb{R}^n\setminus\{0\}$ satisfying $\int_{\mathbb{R}^n}(1\wedge |y|^2)\nu(x,\d y)<\infty$.
We call $\mathcal{L}_1$ as the {\bf L\'{e}vy-type operator} with triplets or characteristics $(b(x), Q(x), \nu(x,\d y))$, see e.g. \cite{bsw13}.

The following lemma is the main input of Theorem \ref{symmetric operator}.
\begin{lemma}[{\cite[Corollary 4.3]{ks19}}]\label{levy-type measure}
For any fixed $x \in \mathbb{R}^n$, the family of measures $p_t(d y):=t^{-1} \mathbb{P}_x\left(Z_t-x \in d y\right),  t>0$ on $\left(\mathbb{R}^n \backslash\{0\}, \mathcal{B}\left(\mathbb{R}^n \backslash\{0\}\right)\right)$ converges vaguely to $\nu(x, d y)$. Furthermore, for any Borel set $A \in \mathcal{B}\left(\mathbb{R}^n \backslash\{0\}\right)$ which satisfies that $0 \notin \bar{A}$ and $\nu(x, \partial A)=0$, we have
$$ \lim _{t \rightarrow 0} \frac{1}{t} \mathbb{P}_x\left(Z_t-x \in A\right)=\nu(x, A).$$
\end{lemma}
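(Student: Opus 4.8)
The plan is to extract the short-time behaviour of $p_t$ directly from the integro-differential form of $\mathcal{L}_1$. Fix $x\in\mathbb{R}^n$ and abbreviate $\nu:=\nu(x,\cdot)$. Given a test function $f\in C_0^\infty(\mathbb{R}^n\setminus\{0\})$, set $g:=f(\cdot-x)$; then $g\in C_0^\infty(\mathbb{R}^n)$, $g$ vanishes on a ball $B(x,r)$ about $x$, and $g(x)=f(0)=0$. Consequently
$$
\int_{\mathbb{R}^n\setminus\{0\}}f(y)\,p_t(\d y)=\frac1t\,\mathbb{E}_x\!\big[f(Z_t-x)\big]=\frac1t\,\mathbb{E}_x\!\big[g(Z_t)\big]=\frac1t\big(P_tg(x)-g(x)\big),
$$
and since $g\in C_0^\infty(\mathbb{R}^n)\subseteq\mathcal{D}(\mathcal{L}_1)$, the definition of the generator gives $\frac1t\big(P_tg(x)-g(x)\big)\to\mathcal{L}_1 g(x)$ as $t\downarrow0$.

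The next step is to identify $\mathcal{L}_1 g(x)$ from the representation with triplet $(b(x),Q(x),\nu(x,\cdot))$. Since $g\equiv0$ on $B(x,r)$ we have $g(x)=0$, $\nabla g(x)=0$ and $\nabla^2 g(x)=0$, so the drift and diffusion terms as well as the compensating terms $-g(x)-\nabla g(x)\cdot y\,\mathbf{1}_{\{|y|\leqslant1\}}$ drop out, and $g(x+y)=0$ whenever $|y|<r$; hence the jump part reduces to the absolutely convergent integral $\int_{\{|y|\geqslant r\}}g(x+y)\,\nu(\d y)$, using $\nu(\{|y|\geqslant r\})<\infty$, which in turn follows from $\int(1\wedge|y|^2)\,\nu(\d y)<\infty$. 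As $g(x+y)=f(y)$, this shows $\mathcal{L}_1 g(x)=\int_{\mathbb{R}^n\setminus\{0\}}f(y)\,\nu(\d y)$, so that
$$
\lim_{t\downarrow0}\int f\,\d p_t=\int f\,\d\nu\qquad\text{for every }f\in C_0^\infty(\mathbb{R}^n\setminus\{0\}).
$$

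To upgrade this to vague convergence, I would first note a local bound on the family $\{p_t\}$: for any compact $K\subseteq\mathbb{R}^n\setminus\{0\}$, choose $f\in C_0^\infty(\mathbb{R}^n\setminus\{0\})$ with $\mathbf{1}_K\leqslant f$; then $\limsup_{t\downarrow0}p_t(K)\leqslant\int f\,\d\nu<\infty$, so $\sup_{0<t\leqslant t_0}p_t(K)<\infty$ for some $t_0>0$. Combining this bound with the density (for the uniform norm, with supports inside a fixed compact set) of $C_0^\infty(\mathbb{R}^n\setminus\{0\})$ in $C_c(\mathbb{R}^n\setminus\{0\})$, a routine $3\varepsilon$-argument yields $\int f\,\d p_t\to\int f\,\d\nu$ for all $f\in C_c(\mathbb{R}^n\setminus\{0\})$; that is, $p_t\to\nu$ vaguely on $\mathbb{R}^n\setminus\{0\}$.

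Finally, for the statement about Borel sets, fix $A$ with $0\notin\overline{A}$ and $\nu(\partial A)=0$; then $\overline{A}\subseteq\{|y|\geqslant\delta\}$ for some $\delta>0$, so $\nu(A)<\infty$ and the restriction of $\nu$ to $\{|y|\geqslant\delta\}$ is a finite, hence inner and outer regular, Borel measure. When $A$ is relatively compact this is the usual portmanteau argument: squeezing $\mathbf{1}_A$ between continuous compactly supported functions that dominate $\mathbf{1}_{\overline{A}}$, respectively are dominated by $\mathbf{1}_{\operatorname{int}A}$, and using regularity of $\nu$ together with $\nu(\overline{A})=\nu(\operatorname{int}A)=\nu(A)$ (which is where $\nu(\partial A)=0$ enters), one gets $\limsup_{t\downarrow0}p_t(A)\leqslant\nu(A)$ and $\liminf_{t\downarrow0}p_t(A)\geqslant\nu(A)$, that is, $t^{-1}\mathbb{P}_x(Z_t-x\in A)=p_t(A)\to\nu(x,A)$. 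For a general, possibly unbounded, $A$, the lower bound follows by first restricting to $A\cap\{|y|<R\}$ and letting $R\to\infty$, whereas the upper bound additionally requires the uniform short-time tail bound
$$
\lim_{R\to\infty}\ \limsup_{t\downarrow0}\ \frac1t\,\mathbb{P}_x\big(|Z_t-x|\geqslant R\big)=0 ,
$$
and I expect this to be the main obstacle. One way to obtain it is to apply the identity $P_t\psi(x)-\psi(x)=\int_0^t P_s\mathcal{L}_1\psi(x)\,\d s$ with $\psi=1-\phi$, $\phi\in C_0^\infty(\mathbb{R}^n)$, $\phi\equiv1$ near $x$ and $\operatorname{supp}\phi$ inside a large ball (using conservativeness of $Z$, or an obvious modification with an extra bump function otherwise): since $\psi(x)=0$, letting $t\downarrow0$ bounds $\limsup_{t\downarrow0}t^{-1}\mathbb{P}_x(|Z_t-x|\geqslant R)$ essentially by $-\mathcal{L}_1\phi(x)\leqslant\nu(x,\{|y|\gtrsim R\})\to0$; alternatively one may quote the maximal inequalities for Lévy-type processes in \cite{bsw13}. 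Apart from this tail control, the argument is soft, being driven only by the definition of the generator together with standard vague-convergence and regularity facts.
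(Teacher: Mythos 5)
The paper does not prove this lemma at all: it is imported verbatim from \cite[Corollary 4.3]{ks19} and used as a black box, so there is no internal proof to compare your argument against. Your proof is the standard first-principles derivation and it is essentially sound: the reduction to $t^{-1}(P_tg(x)-g(x))\to\mathcal{L}_1g(x)$ for $g=f(\cdot-x)$ with $f\in C_0^\infty(\mathbb{R}^n\setminus\{0\})$, the observation that all local terms of the generator vanish because $g\equiv0$ near $x$, the upgrade to vague convergence via the local mass bound and density of $C_0^\infty$ in $C_c$, and the portmanteau step using $\nu(x,\partial A)=0$ are all correct. You also correctly identify the only genuinely delicate point, namely the uniform small-time tail bound $\lim_{R\to\infty}\limsup_{t\downarrow0}t^{-1}\mathbb{P}_x(|Z_t-x|\geqslant R)=0$, which is indispensable because the sets $A$ (and the sets $B$ to which the paper applies the lemma) may be unbounded. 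Two small points to tighten there: $\psi=1-\phi$ is not in $C_0^\infty$, so rather than invoking $P_t\psi-\psi=\int_0^tP_s\mathcal{L}_1\psi\,\mathrm{d}s$ for $\psi$ you should write $t^{-1}\mathbb{P}_x(|Z_t-x|\geqslant R)\leqslant t^{-1}\big(\phi(x)-P_t\phi(x)\big)\to-\mathcal{L}_1\phi(x)\leqslant\nu(x,\{|y|\geqslant R/2\})$ directly, which needs only $P_t\mathbf{1}\leqslant\mathbf{1}$ and not conservativeness; and in the $\liminf$ step for unbounded $A$ you should choose the truncation radius $R$ with $\nu(x,\{|y|=R\})=0$ so that $A\cap B_R$ still has $\nu$-null boundary. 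With those adjustments the argument is complete and matches what one expects the proof in \cite{ks19} to be.
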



We are now ready to present the
\bigskip

\noindent{\bf{Proof of Theorem \ref{symmetric operator}}}.
Recall that $\mathcal{L}$ is the operator defined in \eqref{fractional-drift}, whose triplets are $(b(x),0,\nu(x,\d y))$. Here $\nu(x,\d y):=\frac{C_\alpha a(x)}{|y-x|^{1+\alpha}}\d y$.
If $b=0$, then it is clear that $\mathcal{L}=a(x){\Delta^{\alpha/2}}$ is symmetric with respect to $a(x)^{-1}\d x$. So it suffices to prove the necessity. In the following, let us assume that $\mathcal{L}$ is symmetric with respect to a measure $\mu\in\mathscr{P}$. Our goal is to prove that $b=0$.

We start with computing the dual operator $\mathcal{L}^*$ for $\mathcal{L}$ with respect to measure $\mu(\d x)=\rho(x)\d x$. 
To achieve this, for any $f,g\in C_0^\infty(\mathbb{R})$ we have
\begin{equation*}
  \begin{split}
    \lan \mathcal{L}f,g\ran_{\mu}=&\int_{\mathbb{R}}a(x)\Delta^{\alpha/2}f(x)g(x)\rho(x)\d x+\int_{\mathbb{R}}b(x) f'(x) g(x)\rho(x)\d x.
  \end{split}
\end{equation*}
Since the fractional Laplacian operator is symmetric with respect to the Lebesgue measure, it follows from integral by parts that
\begin{equation*}
  \begin{split}
    \lan \mathcal{L}f,g\ran_{\mu}=&\int_{\mathbb{R}}f(x)\Delta^{\alpha/2}(ag\rho)(x)\d x-\int_{\mathbb{R}}(g\rho b)'(x) f(x) \d x=\lan f,\mathcal{L}^*g\ran_{\mu}.
  \end{split}
\end{equation*}
Therefore, by the definition of dual operator, we get that

  \begin{align}
    \mathcal{L}^*g(x)=&\frac{1}{\rho(x)}\Delta^{\alpha/2}(ag\rho)(x)-\frac{1}{\rho(x)}(g\rho b)'(x) \nonumber \\
    =&\frac{1}{\rho(x)}\lim_{j\rightarrow\infty}\int_{\{|x-y|>1/j\}}(a(y)g(y)\rho(y)-a(x)g(x)\rho(x))\frac{C_{\alpha}\d y}{|x-y|^{1+\alpha}}\nonumber \\
    &-b(x) g'(x)-\frac{1}{\rho(x)}g(x)(\rho b)'(x)\nonumber \\
    =&\lim_{j\rightarrow\infty}\frac{C_{\alpha}}{\rho(x)}\int_{\{|x-y|>1/j\}}\frac{(g(y)-g(x))a(y)\rho(y)}{|x-y|^{1+\alpha}}\d y\nonumber \\
    &+\lim_{j\rightarrow\infty}\frac{C_{\alpha}g(x)}{\rho(x)}\int_{\{|x-y|>1/j\}}\frac{a(y)\rho(y)-a(x)\rho(x)}{|x-y|^{1+\alpha}}\d y\nonumber \\
    &-b(x) g'(x)-\frac{1}{\rho(x)}g(x)(\rho b)'(x)\nonumber \\
    =:&I_1+I_2+I_3+I_4.\label{L*}
  \end{align}

Now let $P_t(x,\cdot)$ be the transition semigroup generated by $\mathcal{L}$.
Then it follows from Lemma \ref{levy-type measure} and the definition of $\nu$ that for any Borel set $B \in \mathcal{B}\left(\mathbb{R} \backslash\{0\}\right)$ with $0 \notin \bar{B}$ and $\nu(x, \partial B)=0$,
\begin{equation}\label{small time asym}
\lim_{t\rightarrow0}\frac{P_t(0,B)}{t}=\nu(0, B)=a(0)\int_B\frac{C_{\alpha}}{|y|^{1+\alpha}}\d y.
\end{equation}
In addition, denote by $(b^*(x), Q^*(x), \nu^*(x,\d y))$ the triplets of $\mathcal{L}^*$. Then from \eqref{L*}, one can see that
\begin{equation}\label{nu*}
\nu^*(0,B)=\frac{1}{\rho(0)}\int_B\frac{a(y)\rho(y)C_{\alpha}}{|y|^{1+\alpha}}\d y.
\end{equation}
Since $\mathcal{L}$ is symmetric with respect to the measure $\mu$, i.e., $ \mathcal{L}^*= \mathcal{L}$,  thus $\nu=\nu^*$. Combining this fact with \eqref{small time asym} and \eqref{nu*} we obtain that
$$a(0)=\frac{ a(y)\rho(y)}{\rho(0)}\quad \text{for all }y\in \mathbb{R}.$$
This implies that $a=c_0/\rho$ for some constant $c_0$. Hence, substituting this into terms $I_2$ and $I_1$, we get that $I_2=0$ and $I_1=a(x)\Delta^{\alpha/2}g(x)$. 
Furthermore, using \eqref{fractional-drift} and $\mathcal{L}=\mathcal{L}^*$ again, we obtain
\begin{equation}\label{divergence}
 -I_3=2 b(x) g'(x)=I_4=-\frac{1}{\rho(x)}g(x)(\rho b)'(x)\quad \text{for all }g\in C_0^\infty(\mathbb{R}).
\end{equation}

For any fixed $R>0$, we now choose $g\in C_0^\infty(\mathbb{R})$ such that $g\equiv  1$ in $B_R(0):=\{x\in\mathbb{R}: |x|< R\}$. Then by \eqref{divergence},
$$\frac{1}{\rho(x)}(\rho b)'(x)=0,\quad \text{in}\ B_R(0).$$
According to the arbitrariness of $R$, we can obtain  $I_4=0$. Hence, combining the above analysis with the fact $\mathcal{L}^*= \mathcal{L}$, we see that
$$b(x) g'(x)=-b(x) g'(x),\quad \forall\ g\in C_0^\infty(\mathbb{R}),$$
which implies $b\equiv 0$.
\deprf

\section{Estimates for Green function}\label{monotonicity}

\ \ \ \ In this section, our focus is on the Green function for the process $Y=(Y_t)_{t\geqslant 0}$ killed upon closed sets, which will play a key role in the proof for our main results of functional inequalities.

Let us start with some notations. Recall that $Y=(Y_t)_{t\geqslant 0}$ is the time-changed symmetric $\alpha$-stable process with $\alpha\in (1,2)$, and $a(x)$ is the continuous positive and differentiable function. Assume that $Y$ is ergodic and $\mu(\mathbb{R})=\int_{\mathbb{R}} a(x)^{-1}\d x=1$.

For a closed set $A\subset \mathbb{R}$, let $T_{A}=\inf\{t\geqslant 0: Y_t\in A\}$ be the time that $Y$ first hits $A$.  Denote by  $P_t^{A}(x,\d y),\ x,y\in A^c,\ t\geqslant 0$ the sub-Markov transition kernel of the process $Y$ killed on the first entry into $A$, that is,
$$
P_t^{A}(x,\d y)=\mathbb{P}_x\left[Y_t\in \d y, t<T_{A}\right],\quad x,y\in A^c,\ t\geqslant 0.
$$
Write $(P_t^A)_{t\geqslant 0}$  for the corresponding transition semigroup, that is,
$$P_t^Af(x)=\int_{A^c}f(y)P_t^{A}(x,\d y)$$
for bounded measurable $f$ on $A^c$.
We also denote by $(\mathcal{L}^A, \mathcal{D}(\mathcal{L}^A))$ the infinitesimal generator of the semigroup $(P_t^A)_{t\geqslant0}$, and denote by $Y^A$ the corresponding killed process.

The  Green potential measure of the killed process $Y^A$ starting from $x$ is a Borel measure defined by
$$G^A(x,\d y)=\int_0^{\infty}P_t^A(x,\d y)\d t ,$$
and the corresponding Green operator $G^A$ is given by
\begin{equation}\label{Green operator}
	G^Af(x)=\int_{A^c}f(y)G^A(x,\d y)=\int_{0}^{\infty}P_t^Af(x)\d t
\end{equation}
for all $x\in A^c$ and all measurable function $f$ with $G^A|f|<\infty$.

In addition, recall that $X=(X_t)_{t\geq 0}$ is the symmetric stable process on $\mathbb{R}$ with generator $\Delta^{\alpha/2}$, $\alpha\in (1,2)$.
We also write $G_X^A(x,\d y)$ for the Green potential measure of the associated killed process $X^A$, i.e.,
$$G_X^A(x,\d y):=\int_0^{\infty}\mathbb{P}_x[X_t\in \d y,t<T_A^X]\d t,$$
where $T_{A}^X:=\inf\{t\geqslant 0: X_t\in A\}$ is the time that the process $X$ first hits $A$.
Let $G_X^A(x, y)$ denote the density of $G_X^A(x,\d y)$ with respect to the Lebesgue measure, which we will refer to as the {\it Green function} of $X^A$.
From \cite[(16)]{W23}, we can observe that for any measurable function $f$ with  $G^A|f|<\infty$, the Green operator $G^A$ defined in \eqref{Green operator} can be represented by the Green function $G_X^A(x, y)$ as
\begin{equation}\label{Green operator representation}
	G^Af(x)=\int_{A^c} f(y)	G_X^A(x, y)\mu(\d y).
\end{equation}

In the rest of this section, we will present two important results for estimating the Green function $G_X^{[-1,1]}(\cdot,\cdot)$ and the Green operator $G^{[-n,n]}$, both of which are crucial for proving functional inequalities.

\subsection{Monotonicity of Green function $G_X^{[-1,1]}(\cdot,\cdot)$}
\ \ \ \ Without causing confusion, we will omit the superscript of $G_X^{[-1,1]}(\cdot,\cdot)$ and write it simply as $G(\cdot,\cdot)$ in this subsection.

It is known that Profeta and Simon \cite{PS16} (see also \cite[(11)]{KAE20}) derived the explicit formula for the Green function $G$ as follows:
\begin{equation}\label{def-G}
G(x,y)=c_\alpha\left(|x-y|^{\alpha-1}h\left(\frac{|xy-1|}{|x-y|}\right)-(\alpha-1)h(x)h(y)\right),
\end{equation}
where $c_\alpha:=2^{1-\alpha}/(\Gamma(\alpha/2)^2)$, and
\begin{equation}\label{def-h}
	h(x):=\int_{1}^{|x|}(z^{2}-1)^{\frac{\alpha}{2}-1} \mathrm{ d} z,\quad x\in [-1,1]^c
\end{equation}
is the harmonic function for $P_t^{[-1,1]}$, that is, $P_t^{[-1,1]}h(x)=h(x)$ for all $t\geqslant 0$ and $x\in [-1,1]^c$, see \cite[Theorem 1.2]{KAE20} for more details. We observe that \eqref{def-G} and \eqref{def-h} directly imply
\begin{equation}\label{Gsymmetric}
G(x,y)=G(y,x)\quad \text{and} \quad G(x,y)=G(-x,-y)\quad \text{for all } x,y\in [-1,1]^c.
\end{equation}
For convenience, we refer to these properties as the symmetry of $G$.
Furthermore, from \cite[Lemma 3.3]{KAE20} we see that
\begin{equation}\label{limit}
	\lim _{y \rightarrow \infty} G(x, y)=K_{\alpha} h(x)\quad \text{for all }x\in [-1,1]^c,
\end{equation}
where  	
\begin{equation*}
	K_{\alpha}:=\frac{2^{2-\alpha} \left(1-\frac{\alpha}{2}\right) }{\Gamma\left(1-\frac{\alpha}{2}\right)\Gamma\left(\frac{\alpha}{2}\right)} \int_{1}^{\infty} \frac{h'(v)}{1+v} \d v<\infty.
\end{equation*}

The main result in this subsection is to establish the monotone property of $G(x,\cdot)$.

\begin{prop}\label{G-decrease}
For fixed $x>1$, the Green function $G(x,\cdot)$ is non-increasing on $(x,+\infty)$.
\end{prop}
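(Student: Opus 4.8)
The plan is to work directly from the explicit formula \eqref{def-G} and show that $\partial_y G(x,y)\leqslant 0$ for $y>x>1$. Writing $u=|xy-1|/|x-y|$ for the argument of the first $h$, note that for $y>x>1$ we have $xy>1$ and $y>x$, so $u=(xy-1)/(y-x)$, and $G(x,y)=c_\alpha\bigl((y-x)^{\alpha-1}h(u)-(\alpha-1)h(x)h(y)\bigr)$. Differentiating in $y$ gives
\begin{equation*}
	\partial_y G(x,y)=c_\alpha\Bigl((\alpha-1)(y-x)^{\alpha-2}h(u)+(y-x)^{\alpha-1}h'(u)\,\partial_y u-(\alpha-1)h(x)h'(y)\Bigr),
\end{equation*}
and a direct computation yields $\partial_y u=\partial_y\frac{xy-1}{y-x}=\frac{x(y-x)-(xy-1)}{(y-x)^2}=\frac{1-x^2}{(y-x)^2}<0$, while $h'(u)=(u^2-1)^{\alpha/2-1}>0$. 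So the middle term is negative; the real work is to control the sign of the combination of the first and third terms, which are of opposite sign since $\alpha-1>0$.

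The key algebraic step I would carry out is to simplify $u^2-1$. One computes $u^2-1=\frac{(xy-1)^2-(y-x)^2}{(y-x)^2}=\frac{(x^2-1)(y^2-1)}{(y-x)^2}$, a clean factorization that is the crux of the whole argument. Hence $(y-x)^{\alpha-1}h'(u)=(y-x)^{\alpha-1}(u^2-1)^{\alpha/2-1}=(y-x)^{\alpha-1}\cdot\frac{(x^2-1)^{\alpha/2-1}(y^2-1)^{\alpha/2-1}}{(y-x)^{\alpha-2}}=(y-x)(x^2-1)^{\alpha/2-1}(y^2-1)^{\alpha/2-1}$. Combining this with $\partial_y u=(1-x^2)/(y-x)^2$, the middle term becomes $-(x^2-1)^{\alpha/2}(y^2-1)^{\alpha/2-1}/(y-x)$, i.e. $-(x^2-1)^{\alpha/2}h'(y)/(y-x)$ using $h'(y)=(y^2-1)^{\alpha/2-1}$. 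Thus
\begin{equation*}
	\partial_y G(x,y)=c_\alpha\Bigl((\alpha-1)(y-x)^{\alpha-2}h(u)-\frac{(x^2-1)^{\alpha/2}}{y-x}h'(y)-(\alpha-1)h(x)h'(y)\Bigr).
\end{equation*}
It now suffices to show $(\alpha-1)(y-x)^{\alpha-2}h(u)\leqslant\bigl(\tfrac{(x^2-1)^{\alpha/2}}{y-x}+(\alpha-1)h(x)\bigr)h'(y)$ for $y>x>1$.

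For the final inequality I would bound $h(u)$ from above using the definition \eqref{def-h}: $h(u)=\int_1^u(z^2-1)^{\alpha/2-1}\d z$. Using $u^2-1=(x^2-1)(y^2-1)/(y-x)^2$ and substituting, one can relate $h(u)$ to $h(y)$ — changing variables in the integral for $h(u)$ by $z\mapsto$ the corresponding point as a function of $y$ should express $h(u)$ as an integral in the $y$-variable that can be compared against $h'(y)$ times an explicit factor; alternatively, since the claim is monotonicity (not a sharp constant), one may use the concavity/monotonicity of $z\mapsto(z^2-1)^{\alpha/2-1}$ (which is \emph{decreasing} for $\alpha<2$) to get $h(u)\leqslant (u-1)(u^2-1)^{\alpha/2-1}$ and hence a manageable upper bound in terms of $x,y$. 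The main obstacle I anticipate is precisely this last estimate: verifying that the elementary-but-messy inequality between $h(u)$, $h(x)$, $h(y)$ and the power factors holds uniformly for all $y>x>1$, including the delicate regime $y\downarrow x$ where $u\to\infty$ and $(y-x)^{\alpha-2}\to\infty$ (so both sides blow up and one needs the rates to match). A cleaner route that sidesteps some of this: use the probabilistic identity $G(x,y)=\mathbb{E}_x[G(X_{T_{[-1,1]\cup\{?\}}},y)]$-type last-exit or strong-Markov decompositions, or exploit \eqref{limit} together with the observation that $y\mapsto G(x,y)-K_\alpha h(x)$ vanishes at $+\infty$, reducing monotonicity to showing this difference is nonnegative and has no interior critical point; but I expect the direct differentiation above, combined with the factorization $u^2-1=(x^2-1)(y^2-1)/(y-x)^2$, to be the most transparent path. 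I would also separately note the easy boundary behavior (continuity of $G(x,\cdot)$ up to $y=x$, where $G(x,x)$ is finite) so that monotonicity on the open interval extends to the stated interval.
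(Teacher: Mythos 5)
Your computation up through the reduction is correct and is essentially the same route the paper takes, just in the $(x,y)$ variables rather than the paper's substitutions $y=x+u$, $v=(x^2-1)/u$. In particular your factorization $u^2-1=(x^2-1)(y^2-1)/(y-x)^2$ is exactly the algebraic identity the paper also exploits (there it appears as $\bigl(x+\tfrac{x^2-1}{u}\bigr)^2-1=(x^2-1)(y^2-1)/u^2$), and your reduced inequality
\[
(\alpha-1)(y-x)^{\alpha-2}h(u)\;\leqslant\;\Bigl(\tfrac{(x^2-1)^{\alpha/2}}{y-x}+(\alpha-1)h(x)\Bigr)h'(y)
\]
is equivalent to what the paper must prove. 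But this inequality is precisely the hard part, and you have not proved it --- you acknowledge it as ``the main obstacle I anticipate,'' and the proposal stops at a sketch. The paper's Lemmas~\ref{R<=0} and~\ref{S<=0} are devoted entirely to this step: after a further change of variable, the claim reduces to $S_x(v)\leqslant 0$, which in turn reduces to $R_x(z)\leqslant 0$, and the latter is handled by yet another pair of differentiations, Taylor expansion of $(1-2/(x+1))^{1-\alpha/2}$, and a limit argument as $x\to\infty$. None of that is trivial or can be waved through.

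Moreover, the one concrete idea you offer for closing the gap is wrong in direction. Since $z\mapsto(z^2-1)^{\alpha/2-1}$ is \emph{decreasing} on $(1,\infty)$ (its value at the right endpoint is the minimum over $[1,u]$), one gets $h(u)=\int_1^u(z^2-1)^{\alpha/2-1}\,\mathrm{d}z\geqslant(u-1)(u^2-1)^{\alpha/2-1}$, not $\leqslant$. So the monotonicity of the integrand gives a \emph{lower} bound for $h(u)$, which is useless here (you need an upper bound for $h(u)$). The usable upper bound in this circle of ideas is instead $h(u)\leqslant\tfrac{1}{\alpha-1}(u-1)^{\alpha-1}$ (from $(z+1)^{\alpha/2-1}\leqslant(z-1)^{\alpha/2-1}$), and even with that the resulting inequality is not immediate and requires the careful case analysis the paper carries out. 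As it stands the proposal identifies the right reduction but leaves the decisive estimate unproven and suggests an incorrect route to it.
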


Before proving Proposition \ref{G-decrease}, we present the following remark.

\begin{remark}
It is worth emphasizing that Proposition \ref{G-decrease} and \eqref{Gsymmetric} imply that for $x<-1$, $G(x,\cdot)$ is non-decreasing on $(-\infty,x)$. Therefore, from \eqref{limit} we get that
\begin{equation}\label{min+}
G(x,y)\geqslant \inf_{z>x} G(x,z)=K_\alpha h(x)\quad \text{for all }x>1 \ \text{and}\ y>x,
\end{equation}
and
\begin{equation*}\label{min-}
G(x,y)\geqslant \inf_{z<x} G(x,z)=K_\alpha h(x)\quad \text{for all }x<-1\ \text{and}\ y<x.
\end{equation*}
\end{remark}
The proof of Proposition \ref{G-decrease} is elementary and straightforward. In fact, our main approach is to directly differentiate the Green function $G$ and verify that its derivative is non-positive. However, considering the complexity of $G$, we need to discuss some properties of certain related functions first. To achieve that, recall that $h$ is the function defined in \eqref{def-h}. For fixed $x>1$, we start with function
$$
R_x(z):=-\left(1-(x+z)^{-2}\right)^{\alpha/2-2}\left(x-(x+z)^{-1}\right)^{4-\alpha}+(\alpha-1)h(x)(x^2-1)^{2-\alpha}x,\quad z\geqslant 0.
$$

\begin{lemma}\label{R<=0}
For fixed $x>1$, the function $R_x$ is non-increasing on $[0,\infty)$. Furthermore, $R_x(z)\leqslant 0$ for all $z\geqslant 0$.
\end{lemma}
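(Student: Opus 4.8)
The plan is to show directly that $R_x$ is non-increasing on $[0,\infty)$ by differentiating in $z$, and then to evaluate (or bound) $R_x$ at the endpoint $z=0$ to conclude $R_x(z)\leqslant R_x(0)\leqslant 0$. Write $u=u(z):=x+z$, so that $u\geqslant x>1$ and $u$ is increasing in $z$. The first term of $R_x$ is $-\bigl(1-u^{-2}\bigr)^{\alpha/2-2}\bigl(u-u^{-1}\bigr)^{4-\alpha}$; since $u-u^{-1}=u(1-u^{-2})$, this first term simplifies to $-u^{4-\alpha}\bigl(1-u^{-2}\bigr)^{(\alpha/2-2)+(4-\alpha)}=-u^{4-\alpha}\bigl(1-u^{-2}\bigr)^{2-\alpha/2}$. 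So in fact
\[
R_x(z)=-u^{4-\alpha}\bigl(1-u^{-2}\bigr)^{2-\alpha/2}+(\alpha-1)h(x)(x^2-1)^{2-\alpha}x,\qquad u=x+z .
\]
Thus the second term is a constant in $z$, and monotonicity of $R_x$ reduces to showing that $\varphi(u):=u^{4-\alpha}\bigl(1-u^{-2}\bigr)^{2-\alpha/2}=\bigl(u^2-1\bigr)^{2-\alpha/2}u$ is non-decreasing for $u>1$. This is immediate: $u^2-1$ is positive and increasing in $u$, $2-\alpha/2>0$ since $\alpha<2$, and $u$ itself is increasing, so $\varphi$ is a product of positive increasing functions, hence increasing; therefore $R_x(z)=-\varphi(x+z)+\text{const}$ is non-increasing on $[0,\infty)$.

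It remains to verify $R_x(0)\leqslant 0$, i.e.
\[
(\alpha-1)h(x)(x^2-1)^{2-\alpha}x\leqslant \varphi(x)=(x^2-1)^{2-\alpha/2}x,
\]
which, after dividing by the positive quantity $(x^2-1)^{2-\alpha}x$, is equivalent to
\[
(\alpha-1)\,h(x)\leqslant (x^2-1)^{\alpha/2}\qquad\text{for all }x>1.
\]
To prove this last inequality, set $\psi(x):=(x^2-1)^{\alpha/2}-(\alpha-1)h(x)$. Then $\psi(1)=0$, and using $h'(x)=(x^2-1)^{\alpha/2-1}$ from \eqref{def-h} we compute
\[
\psi'(x)=\alpha x(x^2-1)^{\alpha/2-1}-(\alpha-1)(x^2-1)^{\alpha/2-1}=(x^2-1)^{\alpha/2-1}\bigl(\alpha x-(\alpha-1)\bigr).
\]
For $x>1$ and $\alpha\in(1,2)$ we have $\alpha x-(\alpha-1)>\alpha-(\alpha-1)=1>0$, so $\psi'(x)>0$ on $(1,\infty)$; combined with $\psi(1)=0$ this gives $\psi(x)>0$, i.e. $(\alpha-1)h(x)<(x^2-1)^{\alpha/2}$, and hence $R_x(0)\leqslant 0$. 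Together with the monotonicity established above, $R_x(z)\leqslant R_x(0)\leqslant 0$ for all $z\geqslant 0$.

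I do not anticipate a serious obstacle here: the only mild subtlety is spotting the algebraic simplification $u-u^{-1}=u(1-u^{-2})$ that collapses the messy first term of $R_x$ into the clean monomial-type expression $(x^2-1)^{2-\alpha/2}\,x$ (evaluated at $u=x+z$), after which both the monotonicity in $z$ and the endpoint estimate reduce to one-variable calculus facts driven by $\alpha\in(1,2)$ and $h'(x)=(x^2-1)^{\alpha/2-1}$. If for some reason the simplification is not used, one could instead differentiate $R_x$ in $z$ directly and check the sign of the derivative termwise, but this is strictly more painful and I would avoid it.
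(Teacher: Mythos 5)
There is a genuine error, and it comes right at the start: you misread the definition of $R_x$. The second factor in the first term is
\[
\bigl(x-(x+z)^{-1}\bigr)^{4-\alpha},
\]
not $\bigl((x+z)-(x+z)^{-1}\bigr)^{4-\alpha}$. In your notation $u=x+z$, the factor is $(x-u^{-1})^{4-\alpha}$, not $(u-u^{-1})^{4-\alpha}$. The base $x$ is a fixed parameter and does not move with $z$. Consequently the identity $u-u^{-1}=u(1-u^{-2})$ is inapplicable; there is no analogous factorization of $x-u^{-1}$, and the clean one-variable expression you derive for $R_x(z)$ is not the function in the statement. (There is also a second, purely algebraic slip: $u^{4-\alpha}(1-u^{-2})^{2-\alpha/2}=(u^2-1)^{2-\alpha/2}$, not $(u^2-1)^{2-\alpha/2}u$; the spurious factor of $u$ later cancels the factor of $x$ in the endpoint inequality, compounding the damage.)

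These errors make the problem look much easier than it is. The paper instead substitutes $w=(x+z)^{-1}\in(0,1/x]$ and treats $\widetilde R_x(w)=-(1-w^2)^{\alpha/2-2}(x-w)^{4-\alpha}+\mathrm{const}$; the derivative works out to $(4-\alpha)(1-w^2)^{\alpha/2-3}(x-w)^{3-\alpha}(1-wx)$, and the whole monotonicity argument hinges on the factor $(1-wx)\geqslant 0$, which encodes precisely the constraint $z\geqslant 0$. In your rewrite this constraint disappears, which is a tell-tale sign something went wrong. The endpoint inequality that is actually needed is
\[
(\alpha-1)\,h(x)\,x\leqslant (x^2-1)^{\alpha/2},
\]
with the extra factor of $x$ on the left, and this is asymptotically tight as $x\to\infty$ (both sides are $\sim x^\alpha$). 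It therefore cannot be dispatched by the one-line derivative argument you give for the strictly weaker $(\alpha-1)h(x)\leqslant(x^2-1)^{\alpha/2}$; the paper has to prove convexity of $H(x):=-(x^2-1)^{\alpha/2}+(\alpha-1)h(x)x$ and then push $H'$ to its limit at infinity via a Taylor expansion to conclude $H'\leqslant 0$, which is genuinely more work. So both halves of the lemma — monotonicity in $z$ and the endpoint bound — need the harder route; your proposal proves a different (and easier) statement.
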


\begin{proof}
We begin by applying a change of variable with $w=(x+z)^{-1}$ to the function $R_x$, yielding that
\begin{equation*}
\widetilde{R}_x(w)=-(1-w^2)^{\alpha/2-2}(x-w)^{4-\alpha}+(\alpha-1)h(x)(x^2-1)^{2-\alpha}x,\quad w\in (0,1/x].
\end{equation*}
Differentiating $\widetilde{R}_x$ directly gives us that
\begin{equation*}
\begin{split}
\frac{\d}{\d w}\widetilde{R}_x(w)&=-(4-\alpha)(1-w^2)^{\alpha/2-3}w(x-w)^{4-\alpha}+(4-\alpha)(1-w^2)^{\alpha/2-2}(x-w)^{3-\alpha}\\
&=(4-\alpha)(1-w^2)^{\alpha/2-3}(x-w)^{3-\alpha}(1-wx)\geqslant 0,
\end{split}
\end{equation*}
where in the last inequality we used the fact that $0<w\leqslant 1/x<1$. This implies $\widetilde{R}_x$ is non-decreasing on $(0,1/x]$. That is, $R_x$ is non-increasing on $[0,\infty)$.

Next, we claim that $R_x(0)\leqslant 0$ for all $x>1$. Then combining this with the non-increasing property of $R_x$, we can obtain the desired statements in the lemma.
Indeed,
\begin{equation*}
\begin{split}
R_x(0)&=-(x^2-1)^{2-\alpha/2}+(\alpha-1)h(x)(x^2-1)^{2-\alpha}x\\
&=(x^2-1)^{2-\alpha}\left[-(x^2-1)^{\alpha/2}+(\alpha-1)h(x)x\right]=:(x^2-1)^{2-\alpha} H(x).
\end{split}
\end{equation*}
We now turn to prove that $H(x)\leqslant 0$ for all $x\geqslant 1$. By some direct calculations, we see that for any $x>1$,
\begin{equation*}
\begin{split}
H'(x)&=-(x^2-1)^{\alpha/2-1}x+(\alpha-1)h(x),\\
H''(x)&=(2-\alpha)(x^2-1)^{\alpha/2-2}>0.
\end{split}
\end{equation*}
This implies that $H'(x)$ is non-decreasing on $(1,\infty)$.

In addition, according to
$$
(\alpha-1)h(x)\leqslant(\alpha-1) \int_1^x(z-1)^{\alpha-2}\d z= (x-1)^{\alpha-1} \quad \text{for all } x>1
$$
and the Taylor expansion, we obtain that
\begin{equation*}
\begin{split}
H'(x)&\leqslant -(x^2-1)^{\alpha/2-1}x+ (x-1)^{\alpha-1}=(x-1)^{\alpha-1}\left[1-\frac{x}{x-1}\left(1-\frac{2}{x+1}\right)^{1-\alpha/2}\right]\\
&=(x-1)^{\alpha-1}\left[-\frac{1}{x-1}+(1-\alpha/2)\frac{2x}{(x-1)(x+1)}+\cdots\right].
\end{split}
\end{equation*}
Since $\alpha\in (1,2)$ and $H'(x)$ is non-decreasing on $(1,\infty)$, it follows that as $x$ goes to infinity on the above inequality,
$$
\sup_{x>1}H'(x)=\lim_{x\rightarrow \infty}H'(x)\leqslant 0.
$$
Thus, we get that $H$ is non-increasing on $(1,\infty)$. Combining this fact with the continuity of $H$ we have
$$
 H(x)\leqslant \sup_{x>1}H(x)=H(1)=0.
$$
Hence, the proof is complete. \deprf
\end{proof}
Next, for $x>1$, we consider the function
\begin{equation*}
\begin{split}
S_x(z)&:=(\alpha-1)\int_x^{x+z}(u^2-1)^{\alpha/2-1}\d u-z\left[(x+z)^2-1\right]^{\alpha/2-1}\\
&\quad +(\alpha-1)h(x)\left[1-(x^2-1)^{2-\alpha}(xz+x^2-1)^{\alpha-2}\right].
\end{split}
\end{equation*}

\begin{lemma}\label{S<=0}
For fixed $x>1$, $S_x(z)\leqslant 0$ for all $z\geqslant 0$.
\end{lemma}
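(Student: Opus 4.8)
The plan is to show that $S_x(0)=0$ and that $S_x$ is non-increasing on $[0,\infty)$, so that $S_x(z)\leqslant S_x(0)=0$ for every $z\geqslant 0$. The boundary value is immediate: at $z=0$ the integral $\int_x^x(\cdots)\d u$ vanishes, the middle term $z[(x+z)^2-1]^{\alpha/2-1}$ vanishes, and the bracketed factor equals $1-(x^2-1)^{2-\alpha}(x^2-1)^{\alpha-2}=0$. Thus it remains only to prove that $S_x'(z)\leqslant 0$ on $[0,\infty)$.

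Next I would differentiate $S_x$ term by term. The derivative of the first summand is $(\alpha-1)\big((x+z)^2-1\big)^{\alpha/2-1}$, which cancels against part of the derivative of the second summand (whose $((x+z)^2-1)^{\alpha/2-1}$-contribution carries coefficient $-1$), leaving
\[
S_x'(z)=(\alpha-2)\big((x+z)^2-1\big)^{\alpha/2-1}-(\alpha-2)z(x+z)\big((x+z)^2-1\big)^{\alpha/2-2}-(\alpha-1)(\alpha-2)x\,h(x)(x^2-1)^{2-\alpha}(xz+x^2-1)^{\alpha-3}.
\]
Factoring out $(\alpha-2)$ and merging the first two summands via the elementary identity $\big((x+z)^2-1\big)-z(x+z)=x(x+z)-1=xz+x^2-1$, one is led to
\[
S_x'(z)=(\alpha-2)(xz+x^2-1)^{\alpha-3}\Big[\big((x+z)^2-1\big)^{\alpha/2-2}(xz+x^2-1)^{4-\alpha}-(\alpha-1)x\,h(x)(x^2-1)^{2-\alpha}\Big].
\]
The crucial point is that, by the substitution $w=(x+z)^{-1}$ already exploited in the proof of Lemma \ref{R<=0} — equivalently, by the identity $\big(1-(x+z)^{-2}\big)^{\alpha/2-2}\big(x-(x+z)^{-1}\big)^{4-\alpha}=\big((x+z)^2-1\big)^{\alpha/2-2}(xz+x^2-1)^{4-\alpha}$ — the bracket is exactly $-R_x(z)$, so that $S_x'(z)=-(\alpha-2)(xz+x^2-1)^{\alpha-3}R_x(z)$.

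To conclude: since $\alpha\in(1,2)$ we have $\alpha-2<0$, while $xz+x^2-1>x^2-1>0$ for $x>1$ and $z\geqslant 0$, and by Lemma \ref{R<=0} we have $R_x(z)\leqslant 0$; hence $S_x'(z)\leqslant 0$ for all $z\geqslant 0$. Together with $S_x(0)=0$ this yields $S_x(z)\leqslant 0$, which is the claim. The only step that is not pure bookkeeping is recognizing that $S_x'$ collapses to a nonpositive multiple of $R_x$ — this is exactly the reason Lemma \ref{R<=0} (and, inside it, the estimate $H(x)\leqslant 0$) was set up in advance; after that identification the cancellations above are straightforward.
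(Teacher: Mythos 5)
Your proposal is correct and follows essentially the same route as the paper: differentiate $S_x$, observe that $S_x(0)=0$, and after cancellations identify $S_x'(z)=(2-\alpha)(xz+x^2-1)^{\alpha-3}R_x(z)\leqslant 0$ via Lemma~\ref{R<=0}. The algebra in your derivative computation and the identity collapsing the bracket to $-R_x(z)$ both check out.
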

\begin{proof}
It is clear that $S_x(0)=0$. Thus it suffices to prove that $\d S_x(z)/\d z\leqslant 0$ for all $z>0$. In fact, by some direct calculations,
\begin{equation*}
\begin{split}
\frac{\d}{\d z}S_x(z)&=-(2-\alpha)\left[(x+z)^2-1\right]^{\alpha/2-2}(x^2+xz-1)\\
&\quad +(2-\alpha)(\alpha-1)h(x)(x^2-1)^{2-\alpha}(x^2+xz-1)^{\alpha-3}x\\
&=(2-\alpha)(x^2+xz-1)^{\alpha-3}\left[-\left((x+z)^2-1\right)^{\alpha/2-2}(x^2+xz-1)^{4-\alpha}\right.\\
&\quad \quad \quad \quad \quad \quad \quad \quad\quad \quad \quad \quad \quad \quad  +(\alpha-1)h(x)(x^2-1)^{2-\alpha}x\Big]\\
&=(2-\alpha)(x^2+xz-1)^{\alpha-3}R_x(z).
\end{split}
\end{equation*}
Then combining this with Lemma \ref{R<=0} and the fact that $\alpha\in (1,2)$ and $x>1$, we get that $\d S_x(z)/\d z\leqslant 0$ for all $z>0$.
\deprf
\end{proof}

With these lemmas at hand, we are ready to present the

\noindent{\bf Proof of Proposition \ref{G-decrease}}.
Fix $x>1$.  
We let $y=x+u$ for $u\geqslant 0$ in Green function $G(x,y)$, and denote
$$
F_x(u)
=G(x,x+u)/c_\alpha
=u^{\alpha-1}\left[h(x+\frac{x^2-1}{u})-h(x)\right]+h(x)\left[u^{\alpha-1}-(\alpha-1)h(y)\right].
$$
Now it suffices to prove that $\d F_x(u)/\d u\leqslant 0$ for all $u>0$. To accomplish this, by \eqref{def-h} and differentiating $\widetilde{F}_x$ directly gives that
\begin{equation}\label{F'}
\begin{split}
\frac{\d}{\d u}F_x(u)&=(\alpha-1)u^{\alpha-2}\int_x^{x+\frac{x^2-1}{u}}(z^2-1)^{\alpha/2-1}\d z-u^{\alpha-3}(x^2-1)\left[\left(x+\frac{x^2-1}{u}\right)^2-1\right]^{\alpha/2-1}\\
&\quad +(\alpha-1)h(x)\left[u^{\alpha-2}-\left((x+u)^2-1\right)^{\alpha/2-1}\right]\\
&\leqslant (\alpha-1)u^{\alpha-2}\int_x^{x+\frac{x^2-1}{u}}(z^2-1)^{\alpha/2-1}\d z-u^{\alpha-3}(x^2-1)\left[\left(x+\frac{x^2-1}{u}\right)^2-1\right]^{\alpha/2-1}\\
&\quad +(\alpha-1)h(x)\left[u^{\alpha-2}-(x+u)^{\alpha-2}\right]=:\widetilde{H}_x(u).
\end{split}
\end{equation}
In addition, by applying a change of variable $v=\frac{x^2-1}{u}$ in $\widetilde{H}_x(u)$ and Lemma \ref{S<=0},  we have
\begin{equation*}
\begin{split}
H_x(v)&:=\widetilde{H}_x(\frac{x^2-1}{v})\\
&=(x^2-1)^{\alpha-2}v^{2-\alpha}\left\{(\alpha-1)\int_x^{x+v}(z^2-1)^{\alpha/2-1}\d z-v\left[(x+v)^2-1\right]^{\alpha/2-1}\right.\\
&\quad \quad \quad\quad \quad \quad\quad \quad \quad +(\alpha-1)h(x)\left[1-(x^2-1)^{2-\alpha}(xv+x^2-1)^{\alpha-2}\right]\bigg\}\\
&=(x^2-1)^{\alpha-2}v^{2-\alpha} S_x(v)\leqslant 0.
\end{split}
\end{equation*}
Using this to \eqref{F'}, we can obtain the desired statement $\d F_x(u)/\d u\leqslant 0$  for all $u>0$. Hence, the proof is complete.
\deprf

\subsection{Upper bound for Green operator $G^{[-n,n]}$}

\ \ \ \ In this subsection, we derive an upper bound for the Green operator $G^{[-n,n]}$, defined by \eqref{Green operator} and represented by \eqref{Green operator representation}. This upper bound will be of importance in  proving the super-Poincar\'e inequality (Theorem \ref{SPI}).
\begin{theorem}\label{upper bound}
	For any $n>0$, let $f_n(x)=(|x|-2^{-\frac{1}{\alpha-1}}n)^{(\alpha-1)/2}$ on $[-n,n]^c$. If
	\begin{equation}\label{delta-n}
		\delta_n:=\sup_{|x|>n}\left\{\left(|x|-2^{-\frac{1}{\alpha-1}}n\right)^{\alpha-1}\mu((|x|,|x|)^c)\right\}<\infty,
	\end{equation}
	then $$\frac{G^{[-n,n]}f_n(x)}{f_n(x)}\leqslant\frac{4c_\alpha\delta_n}{\alpha-1}\quad \text{for all }x\in [-n,n]^c,
$$
where  $c_\alpha:=2^{1-\alpha}/(\Gamma(\alpha/2)^2)$.
\end{theorem}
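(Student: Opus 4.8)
The plan is to bound $G^{[-n,n]}f_n$ by comparison with the explicitly known Green function $G_X^{[-n,n]}(\cdot,\cdot)$ of the symmetric stable process killed on $[-n,n]$, using the representation \eqref{Green operator representation}: $G^{[-n,n]}f_n(x)=\int_{[-n,n]^c}f_n(y)G_X^{[-n,n]}(x,y)\mu(\d y)$. The first step is a scaling reduction to the case $n=1$. By self-similarity of the symmetric $\alpha$-stable process, $G_X^{[-n,n]}(x,y)=n^{\alpha-1}G_X^{[-1,1]}(x/n,y/n)=n^{\alpha-1}G(x/n,y/n)$ with $G$ as in \eqref{def-G}, and $f_n(x)=n^{(\alpha-1)/2}f_1(x/n)$; tracking powers of $n$, the claimed bound $G^{[-n,n]}f_n(x)/f_n(x)\leqslant 4c_\alpha\delta_n/(\alpha-1)$ reduces to the same estimate for $n=1$ with $\delta_1$ in place of $\delta_n$ (the measure $\mu$ being transported accordingly). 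So it suffices to prove, for $|x|>1$,
\begin{equation*}
\int_{[-1,1]^c}f_1(y)\,G(x,y)\,\mu(\d y)\leqslant\frac{4c_\alpha\delta_1}{\alpha-1}\,f_1(x),\qquad f_1(y)=\big(|y|-2^{-1/(\alpha-1)}\big)^{(\alpha-1)/2}.
\end{equation*}

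By the symmetry $G(x,y)=G(-x,-y)$ in \eqref{Gsymmetric} we may assume $x>1$, and we split the integral over $\{y>1\}$ and $\{y<-1\}$. The key analytic input is Proposition \ref{G-decrease}: for fixed $x>1$, $G(x,\cdot)$ is non-increasing on $(x,\infty)$, and, combined with the symmetry, $G(x,\cdot)$ is non-decreasing on $(-\infty,-1)$ — indeed $G(x,y)=G(-x,-y)\leqslant G(-x,x)=G(x,-x)$ is handled by the monotonicity of $G(-x,\cdot)$ on $(-x,\infty)$ — so on the whole region $\{|y|>1\}$ one has the crude bound $G(x,y)\leqslant\sup_{|y|>1}G(x,y)$, and more usefully $G(x,y)\leqslant G(x,x\vee|y|)$ type control. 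The natural strategy is the "layer-cake" / distribution-function approach: write $\int f_1(y)G(x,y)\mu(\d y)$ by integrating the superlevel sets of $G(x,\cdot)$ (or of $f_1$) against $\mu$, and use \eqref{delta-n}, i.e. $\mu((-t,t)^c)\leqslant\delta_1(t-2^{-1/(\alpha-1)})^{-(\alpha-1)}$, to control the tail mass of $\mu$ on $\{|y|>t\}$. Because $f_1(y)^2=(|y|-2^{-1/(\alpha-1)})^{\alpha-1}$ is exactly the reciprocal of the decay rate appearing in $\delta_1$, the product $f_1(y)^2\mu((-|y|,|y|)^c)\leqslant\delta_1$ is bounded, which is what makes the constant come out clean; one integrates by parts in the radial variable, using the explicit near-diagonal growth $G(x,y)\asymp c_\alpha|x-y|^{\alpha-1}$ and the far-field limit $G(x,y)\to K_\alpha h(x)$ from \eqref{limit} together with $h(x)\leqslant(x-1)^{\alpha-1}/(\alpha-1)$.

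Concretely I would: (i) on $\{1<y\le x\}$, bound $G(x,y)\le G(x,x)\le$ (a small-time/near-diagonal estimate) $\le c_\alpha'|x|^{\alpha-1}$-type term and note $f_1(y)\le f_1(x)$, then the $\mu$-mass of this set contributes at most $\delta_1$-order terms; (ii) on $\{y>x\}$, use Proposition \ref{G-decrease} and a Fubini/layer-cake computation $\int_x^\infty f_1(y)G(x,y)\mu(\d y)=-\int_x^\infty\big(\int_x^y f_1(s)\,\mu(\d s)\big)\,dG(x,y)$ (legitimate since $G(x,\cdot)$ is monotone) and plug in the tail bound for $\int_x^y f_1\,d\mu$ coming from \eqref{delta-n}; (iii) on $\{y<-1\}$, use the non-decreasing property to write a similar layer-cake formula anchored at $-\infty$, where \eqref{limit} gives a finite boundary term $K_\alpha h(x)\mu((-\infty,-1))$ controlled by $h(x)\le(x-1)^{\alpha-1}/(\alpha-1)\le f_1(x)^2/(\alpha-1)$ and $\mu$-mass $\le 1$. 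Summing the three pieces and absorbing numerical constants should yield the factor $4/(\alpha-1)$. The main obstacle I anticipate is (ii)–(iii): getting the sharp constant requires the monotonicity of Proposition \ref{G-decrease} in an essential way and a careful choice of how to distribute $f_1$ so that the exponent matching with $\delta_1$ is exact rather than off by a dimensional constant; in particular one must verify that the near-diagonal contribution $\int_1^x$ does not secretly blow up (it does not, because $f_1$ is only Hölder-$(\alpha-1)/2$ and $|x-y|^{\alpha-1}$ is integrable against $\mu$ there), and that the passage $n\to1$ via scaling is done with the correct normalization of $\mu$.
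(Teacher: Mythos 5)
Your proposal misidentifies the key input, and one of the three regional bounds as written would fail. You repeatedly lean on Proposition~\ref{G-decrease} (monotonicity of $G(x,\cdot)$ on $(x,\infty)$) as the ``key analytic input'', but that proposition and the accompanying remark only say that for $x>1$, $G(x,\cdot)$ is non-increasing on $(x,\infty)$, and for $x<-1$, non-decreasing on $(-\infty,x)$. They say nothing about the behavior of $G(x,\cdot)$ on the interval $(1,x)$ or on $(-\infty,-1)$ for $x>1$, which are exactly the regions your steps (i) and (iii) need; and in the paper Proposition~\ref{G-decrease} is in fact used only to produce the \emph{lower} bound \eqref{min+} in necessity arguments elsewhere, not in this theorem. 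The bound actually used here is $G_X^{[-n,n]}(x,y)\leqslant G_X^{[-n,n]}(x,x)\wedge G_X^{[-n,n]}(y,y)$, which comes from the strong Markov property $G_X^{[-n,n]}(x,y)=\mathbb{P}_x[T^X_y<T^X_{[-n,n]}]\,G_X^{[-n,n]}(y,y)$ together with symmetry --- not from monotonicity.

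More seriously, your step (i) fails quantitatively. On $\{1<y\leqslant x\}$ you propose $G(x,y)\leqslant G(x,x)\leqslant c'|x|^{\alpha-1}$ and $f_1(y)\leqslant f_1(x)$, which gives a contribution of order $|x|^{\alpha-1}f_1(x)\mu(\{1<|y|\leqslant x\})$. Since $\mu(\{1<|y|\leqslant x\})$ does not decay as $x\to\infty$, this divided by $f_1(x)$ grows like $|x|^{\alpha-1}$, so the ratio $G^{[-1,1]}f_1(x)/f_1(x)$ would not be bounded. The paper avoids this by using $G(x,y)\leqslant G(y,y)$ on that region and the explicit formula to get $G_X^{[-n,n]}(y,y)\leqslant\frac{c_\alpha}{\alpha-1}\bigl(2|y|^{\alpha-1}-n^{\alpha-1}\bigr)$, which is the small quantity for $y$ near $n$; combined with $G(x,y)\leqslant G(x,x)$ on $\{|y|>|x|\}$ this collapses, after a change of variables, to a single integral $2c_\alpha\int_{2^{-1/(\alpha-1)}n}^{|x|}u^{\alpha-2}\bigl(\int_{|y|>u}f_n\,\d\mu\bigr)\d u$, which is then controlled by integration by parts and the hypothesis \eqref{delta-n}. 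Your layer-cake plan in (ii)--(iii) could in principle be made to work for $y>x$ and $y<-1$, but without the $G(y,y)$ bound there is no way to rescue (i), so the argument as proposed has a genuine gap.
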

\prf
Recall that $G^{[-n,n]}_{X}(\cdot, \cdot)$ is the Green function of the symmetric stable process $X$ killed on $[-n,n]$.
By the scaling of self-similarity (see e.g. \cite{KAE18}), we can see that for all $x,y\in [-n,n]^c$,
\begin{equation}\label{GX}
	\begin{split}
		G^{[-n,n]}_{X}(x, y)&=n^{\alpha-1} G^{[-1,1]}_{X}\left(\frac{x}{n}, \frac{y}{n}\right)\\
		&=
		c_{\alpha}\left(|x-y|^{\alpha-1} h\left(\frac{|x y-n^2|}{n|x-y|}\right)-(\alpha-1)n^{\alpha-1} h(x/n) h(y/n)\right),
	\end{split}
\end{equation}
where $h$ is the harmonic function for $P_t^{[-1,1]}$ defined in \eqref{def-h}. As we presented in \eqref{Green operator representation}, $G^{[-n,n]}$ can be represented by $G^{[-n,n]}_X$. Therefore, in order to estimate $G^{[-n,n]}f_n$, we first estimate $G^{[-n,n]}_X$ as follows.

We start with the first term in the right-hand side of \eqref{GX}. By the basic inequality
$$
\frac{1}{\alpha-1}(|x|^{\alpha-1}-1)\leqslant h(x)\leqslant \frac{1}{\alpha-1}(|x|-1)^{\alpha-1},
$$
it is clear that
\begin{equation*}
	\begin{split}
		\frac{1}{\alpha-1}\left[\frac{|xy-n^2|^{\alpha-1}}{n^{\alpha-1}}-|x-y|^{\alpha-1}\right]&\leqslant|x-y|^{\alpha-1} h\left(\frac{|x y-n^2|}{n|x-y|}\right)\\
		&\leqslant\frac{1}{\alpha-1}\left[\frac{|xy-n^2|-n|x-y|}{n}\right]^{\alpha-1}.
	\end{split}
\end{equation*}
Then letting $x$ goes to $y$ yields that
$$
\lim_{x\rightarrow y}|x-y|^{\alpha-1} h\left(\frac{|x y-n^2|}{n|x-y|}\right)=  \frac{(y^2-n^2)^{\alpha-1}}{(\alpha-1)n^{\alpha-1}}.$$
Combining this with the continuity of $G^{[-n,n]}_{X}$, we arrive at
\begin{equation}\label{estimate1}
	\begin{split}
		G^{[-n,n]}_{X}(y, y)
		&=c_{\alpha}n^{\alpha-1}
		\left(\frac{1}{\alpha-1}\left(\frac{y^2}{n^2}-1\right)^{\alpha-1}-(\alpha-1) h(y/n)^2 \right).\\
	\end{split}
\end{equation}

Note that $(y^2-n^2)^{\alpha-1}\leqslant y^{2\alpha-2}$, and $h(|y|/n)\geqslant\frac{1}{\alpha-1}\left(\left|y/n\right|^{\alpha-1}-1\right).$
Therefore, using these fact and \eqref{estimate1}, we have
 \begin{equation*}
	\begin{split}
		G^{[-n,n]}_{X}(y, y)
&\leqslant\frac{c_{\alpha}}{(\alpha-1)n^{\alpha-1}}\left[(y^2-n^2)^{\alpha-1}-(|y|^{\alpha-1}-n^{\alpha-1})^2\right] \\
&=\frac{c_{\alpha}}{(\alpha-1)n^{\alpha-1}}\left[(y^2-n^2)^{\alpha-1}-y^{2(\alpha-1)}-n^{2(\alpha-1)}
+2|y|^{\alpha-1}n^{\alpha-1}\right]\\
		&\leqslant\frac{c_{\alpha}}{(\alpha-1)n^{\alpha-1}}
		\left(2|y|^{\alpha-1}n^{\alpha-1}-n^{2(\alpha-1)}\right)=\frac{c_{\alpha}}{\alpha-1}
		\left(2|y|^{\alpha-1}-n^{\alpha-1} \right).\\
	\end{split}
\end{equation*}
Moreover, it follows from  the strong Markov property that for $x,y\notin [-n,n]$,
\begin{equation}\label{Gyy}
	G^{[-n,n]}_{X}(x, y)=\mathbb{P}_x[T^X_y<T^X_{[-n,n]}]G^{[-n,n]}_{X}(y, y)\leqslant G^{[-n,n]}_{X}(y, y),
\end{equation}
cf. see \cite[Page 654]{KAE18}. 
It is worth emphasizing that since $X$ is pointwise recurrent, the hitting time $T^X_y$ is well defined.

In addition, by the symmetry of $G^{[-n,n]}_{X}(x, y)$, we can see that
$$	G^{[-n,n]}_{X}(x, y)=G^{[-n,n]}_{X}(y, x)\leqslant G^{[-n,n]}_{X}(x, x),$$
Combining this with \eqref{Gyy}, we obtain that
\begin{equation*}\label{G_n esti}
	G^{[-n,n]}_{X}(x, y)\leqslant G^{[-n,n]}_{X}(x, x)\wedge G^{[-n,n]}_{X}(y, y)\leqslant \frac{c_{\alpha}}{\alpha-1}\left(2\left(|x|^{\alpha-1}\wedge |y|^{\alpha-1}\right)-n^{\alpha-1}\right).
\end{equation*}

Applying the above estimate to \eqref{Green operator representation}, we have
\begin{equation}\label{Gnfn}
	\begin{aligned}
		G^{[-n,n]} f_n(x) &=\int_{[-n,n]^c} G^{[-n,n]}_{X}(x, y) f_n(y) \mu(\mathrm{d} y) \\
		& \leqslant \frac{c_{\alpha}}{\alpha-1} \int_{[-n,n]^c}\left(2\left(|x|^{\alpha-1}\wedge |y|^{\alpha-1}\right)-n^{\alpha-1}\right) f_n(y) \mu(\mathrm{d} y) \\
		&=\frac{c_{\alpha}}{\alpha-1}\int_{n<|y|<|x|} ( 2|y|^{\alpha-1}-n^{\alpha-1}) f_n(y) \mu(\mathrm{d} y)\\
		& \ \ \  +\frac{c_{\alpha}}{\alpha-1}\int_{|x|<|y|<\infty} (2|x|^{\alpha-1}-n^{\alpha-1}) f_n(y) \mu(\mathrm{d} y) \\
		&=c_{\alpha} \int_{n}^{2^{\frac{1}{\alpha-1}}|x|} z^{\alpha-2}\left(\int_{ |y|>2^{-{\frac{1}{\alpha-1}}}z} f_n(y) \mu(\mathrm{d} y)\right) \mathrm{d} z\\
		&=2c_{\alpha} \int_{2^{-\frac{1}{\alpha-1}}n}^{|x|} u^{\alpha-2}\left(\int_{ |y|>u} f_n(y) \mu(\mathrm{d} y)\right) \mathrm{d} u.
	\end{aligned}
\end{equation}
Note that using  the  similar arguments in  the proof of \cite[Lemma 3.3]{W23}, i.e. the integration by parts, for any $u>0$  we have
\begin{equation*}
	\begin{split}
		\int_{(-u,u)^c} f_n(y)\mu(\d y)
		\leqslant 	&(u-2^{-\frac{1}{\alpha-1}}n)
		^{(\alpha-1)/2}\mu((-u,u)^c)\\
		&+\frac{\alpha-1}{2}\int_{u}^{\infty}(y-2^{-\frac{1}{\alpha-1}}n)
		^{(\alpha-3)/2}\mu((-y,y)^c)\d y.
	\end{split}
\end{equation*}
Combining this with the definition of $\delta_n$, we deduce that
\begin{equation*}\label{integral by part}
	\begin{split}
\int_{(-u,u)^c}f_n(y)\mu(\d y)
&\leqslant\frac{\delta_n}{(u-2^{-\frac{1}{\alpha-1}}n)^{(\alpha-1)/2}}+\frac{\delta_n(\alpha-1)}{2}\int_{u}^{\infty}(y-2^{-\frac{1}{\alpha-1}}n)^{-(\alpha+1)/2}\d y\\
&=\frac{2\delta_n}{(u-2^{-\frac{1}{\alpha-1}}n)^{(\alpha-1)/2}}.
\end{split}
\end{equation*}
Therefore, by applying this to \eqref{Gnfn}, we obtain that
\begin{equation*}
	\begin{split}
		G^{[-n,n]} f_n(x)
&\leqslant 2 c_\alpha\int_{2^{-\frac{1}{\alpha-1}}n}^{|x|}u^{\alpha-2}\frac{2\delta_n}{(u-2^{-\frac{1}{\alpha-1}}n)^{(\alpha-1)/2}}\d u\\
		&\leqslant 4c_\alpha\delta_n\int_{2^{-\frac{1}{\alpha-1}}n}^{|x|}(u-2^{-\frac{1}{\alpha-1}}n)^{(\alpha-3)/2}\d u\\
		&=\frac{4c_\alpha\delta_n}{\alpha-1}(|x|-2^{-\frac{1}{\alpha-1}}n)^{(\alpha-1)/2}=\frac{4c_\alpha\delta_n}{\alpha-1}f_n(x).
	\end{split}
\end{equation*}
Hence, the proof is complete.
\deprf

\section{Orlicz-Poincar\'{e} inequality}\label{Orlicz-section}

\ \ \ \ In this section, we study Orlicz-Poincar\'{e} inequality, which is a stronger version of Poincar\'{e} inequality \eqref{Poincare} 
and plays a crucial role in the proof of Theorems \ref{thm-logS}, \ref{thm-Nash} and \ref{interp}. In fact, it has been used to prove the logarithmic Sobolev and Nash inequalities for one-dimensional diffusions and birth-death processes, see \cite{cmf05,myh02,wfy08} for more details. We hope to complete the proofs of Theorems \ref{thm-logS}, \ref{thm-Nash} and  \ref{interp} along this line of thought. For that, let us introduce some notations as follows.


 We say a continuous, even and convex function $\Phi:\mathbb{R}\rightarrow [0,\infty)$ is a $N$-function (or nice Young function), if $\Phi$ satisfies that
\begin{itemize}
\item[(1)] $\Phi(x)=0$ if and only if $x=0$;

\item[(2)] $\lim\limits_{x\rightarrow0} {\Phi(x)}/{x}=0$ and  $\lim\limits_{x\rightarrow\infty} {\Phi(x)}/{x}=\infty.$
\end{itemize}
Corresponding to each $N$-function  $\Phi$, we define a complementary $N$-function (cf.  \cite[Page 2]{RR02} or \cite[Page 123]{cmf05}):
\begin{equation}\label{Psi}
\Phi_c(y)=\sup\{x|y|-\Phi(x): x\geqslant0\},\quad y\in\mathbb{R}.
\end{equation}
Indeed, complementary $N$-function can also be defined as follows.
Let $\phi(t):=\Phi_-'(t)$ be the left derivative of $\Phi$, and let  $\psi(s)=\inf\{t>0:\phi(t)>s\}$ be the inverse function of $\phi$. Then
\begin{equation*}
	\Phi_c(y)=\int_{0}^{|y|}\psi(s)\d s,
\end{equation*}
see e.g. \cite[Page 2]{RR02}.

Let $\eta$ be a finite measure on $\mathbb{R}$ and fix a $N$-function $\Phi$.
Define Orlicz space  $(L^{\Phi}(\eta),\|\cdot\|_{\Phi})$ by
\begin{equation}\label{Orlicz}
	L^{\Phi}(\eta):=\left\{f:\mathbb{R}\rightarrow\mathbb{R},  \int_\mathbb{R}\Phi(f)\d\eta<\infty\right\},\quad	\|f\|_{\Phi}:=\sup_{g\in\mathscr{G}}\int_{\mathbb{R}}|f|g\d\eta,
\end{equation}
where $\mathscr{G}$ is the set of nonnegative functions in the unit ball of $L^{\Phi_c}(\eta)$, that is,
$$\mathscr{G}:=\left\{g\geqslant 0:\int_\mathbb{R}\Phi_c(g)\d\eta\leqslant 1\right\}.$$

In the following, we always assume that $\Phi$ satisfies that $\Delta_2$-condition, i.e.
$$\sup_{x\gg1}\frac{\Phi(2x)}{\Phi(x)}<\infty,$$
Under this assumption, $(L^{\Phi}(\eta),\|\cdot\|_{\Phi})$ is a Banach space; $\|\cdot\|_{\Phi}$ is called \textit{Orlicz norm}. See e.g. \cite[Page123]{cmf05} for more details.

We also introduce the following {\it gauge norm}, which is more practical:
\begin{equation}\label{gauge}
	\|f\|_{(\Phi)}:=\inf\left\{k>0:\int_{\mathbb{R}}\Phi(f/k)\d\eta\leqslant 1\right\},\ \text{}\  f\in L^{\Phi}(\eta).
\end{equation}

The comparison of Orlicz norm and  gauge norm is as follows (cf. \cite[Section 3.3, Proposition 4]{RR91}):
\begin{equation}\label{equiv norm}
	\|f\|_{(\Phi)}\leqslant 	\|f\|_{\Phi}\leqslant 2	\|f\|_{(\Phi)}.
\end{equation}
For any $N$-function $\Phi$, let $\Psi(x)=\Phi(x^2)$. Then by the definition of gauge norm, we have
\begin{equation}\label{identity}
 \|f^2\|_{(\Phi)}=\|f\|_{(\Psi)}^2,
\end{equation}
see \cite{BG99, myh02}.

In the rest of this section, we let $\Phi$ be a $N$-function and  $\Psi(x):=\Phi(x^2)$. Choosing the measure $\eta$ in \eqref{Orlicz} as the reversible measure $\mu$ of time-changed symmetric stable process $Y$.
 Recall that we assume that $\mu(\mathbb{R})=1$, i.e., $\mu$ is a probability for brevity.
Let $(L^{\Phi}(\mu),\|\cdot\|_{\Phi})$ be  the  associated Orlicz space (associated with $\Phi$) defined by \eqref{Orlicz}.

In one-dimensional diffusions and birth-death processes, to prove the logarithmic Sobolev and Nash inequalities, the key method is that ``improving'' Poincar\'e inequality to Orlicz space, and establishing  {\bf Orlicz-Poincar\'{e} inequality} (see \cite{cmf05,myh02,wfy08}) as follows:
\begin{equation}\label{orlicz-poincare}
		\|f-\mu(f)\|_{(\Psi)}^2\leqslant \lambda_\Phi^{-1}\scr{E}(f,f),\quad f\in\mathscr{F},
	\end{equation}
where $\lambda_\Phi$ is a positive constant, and $\|\cdot\|_{(\Psi)}$ is gauge norm given by \eqref{gauge}. Motivated by this, we study Orlicz-Poincar\'{e} inequality for time-changed symmetric stable processes in the following.

\begin{theorem}[Orlicz-Poincar\'{e} inequality]\label{main-orlicz}
	Orlicz-Poincar\'{e} inequality \eqref{orlicz-poincare} holds if and only if
\begin{equation}\label{def-delta}
\delta(\Phi):=\sup_ {x\in \mathbb{R}}\frac{|x|^{\alpha-1}}{\Phi^{-1}(1/\mu((-|x|,|x|)^c))}<\infty.\end{equation}
 Furthermore, the optimal constant $\lambda_\Phi$ enjoys the following estimate:
\begin{equation*}
	\lambda_\Phi\geqslant\frac{1}{32\omega_\alpha \delta(\Phi)}.
	\end{equation*}
\end{theorem}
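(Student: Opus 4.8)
The plan is to adapt the capacitary/Green-function method that produces the sharp Poincaré criterion in Theorem \ref{exp erg} to the Orlicz setting, exploiting the duality between the gauge norm $\|\cdot\|_{(\Psi)}$ and its complementary norm. I would split the argument into necessity and sufficiency, and in both directions the key geometric object is the family of ``barbell'' test functions built from the harmonic function $h$ of the killed process $P_t^{[-1,1]}$, together with its rescalings to $[-n,n]$.

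\emph{Necessity.} Assume \eqref{orlicz-poincare} holds with constant $\lambda_\Phi$. For each $r>1$ I would test the inequality against a function $f_r$ that equals $1$ outside $(-r,r)$, vanishes near the origin, and interpolates using the (rescaled) harmonic function $h(\cdot/r)$ on the annulus, exactly as in the proof of Theorem \ref{exp erg} in \cite{W23}. On one hand $\mathscr{E}(f_r,f_r)$ is controlled by the capacity of $(-r,r)^c$ relative to a larger set, which by the Green-function formula \eqref{def-G}--\eqref{limit} is comparable to $r^{-(\alpha-1)}$ (up to the constant $\omega_\alpha$). On the other hand, since $f_r=1$ on $(-r,r)^c$, the definition \eqref{gauge} of the gauge norm gives $\|f_r-\mu(f_r)\|_{(\Psi)}^2\geqslant c\,\|\mathbf 1_{(-r,r)^c}\|_{(\Psi)}^2$ once $\mu((-r,r)^c)$ is small, and by \eqref{identity} together with the elementary identity $\|\mathbf 1_A\|_{(\Psi)}^2=\|\mathbf 1_A\|_{(\Phi)}=1/\Phi^{-1}(1/\mu(A))$ (read off directly from \eqref{gauge} applied to an indicator), this lower bound is precisely $1/\Phi^{-1}(1/\mu((-r,r)^c))$. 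Feeding these two estimates into \eqref{orlicz-poincare} and taking the supremum over $r$ forces $\delta(\Phi)<\infty$; tracking constants yields $\lambda_\Phi\leqslant C/\delta(\Phi)$ and in particular finiteness.

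\emph{Sufficiency.} Conversely, assume $\delta(\Phi)<\infty$. The standard route is a Muckenhoupt-type / Hardy-inequality argument in the Orlicz norm: reduce \eqref{orlicz-poincare} (by a median truncation and symmetry of $G$, via \eqref{Gsymmetric}) to a one-sided weighted inequality on $(0,\infty)$, then bound $\|f-\mu(f)\|_{(\Psi)}$ by duality against the complementary Orlicz norm, writing the pairing as an integral against $G_X^{[-1,1]}$ and using the monotonicity of $G(x,\cdot)$ from Proposition \ref{G-decrease} together with the lower bound \eqref{min+} to localize. The weight that appears is exactly $\mu((-|x|,|x|)^c)$ against $|x|^{\alpha-1}$, and the Orlicz-Hardy criterion for this pair is finiteness of $\delta(\Phi)$; optimizing the constants in the duality (the factor $2$ in \eqref{equiv norm}, the factor coming from splitting the median, etc.) produces the stated $\lambda_\Phi\geqslant 1/(32\,\omega_\alpha\,\delta(\Phi))$.

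\emph{Main obstacle.} I expect the hard part to be the sufficiency direction: one must establish an \emph{Orlicz-norm} Hardy/Muckenhoupt inequality adapted to the nonlocal form $\mathscr{E}$, where the usual pointwise telescoping of the Dirichlet energy is replaced by the double integral with kernel $|x-y|^{-1-\alpha}$. Controlling $\mathscr{E}(f,f)$ from below by a weighted integral of $|f'|$ or of increments requires the Green-function estimates of Section \ref{monotonicity} (Proposition \ref{G-decrease} and the bound \eqref{min+}), and getting the \emph{explicit} constant $32\,\omega_\alpha$ rather than an unspecified one demands carefully choosing the near-optimal test/dual functions — essentially the rescaled $h$ — and keeping every inequality tight. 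The necessity direction and the reduction of \eqref{orlicz-poincare} to its gauge-norm-indicator form via \eqref{identity} should be routine by comparison.
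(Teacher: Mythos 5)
Your outline correctly identifies the two halves and the role of gauge--Orlicz duality, but the mechanism you propose for \emph{sufficiency} is not the one that works in the paper, and as stated it leaves a real gap; your necessity sketch, in contrast, is a genuine (and arguably simpler) alternative to the paper's.

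\emph{On sufficiency.} You propose an ``Orlicz-norm Hardy/Muckenhoupt inequality'' proved via the Green-function monotonicity of Proposition~\ref{G-decrease} and the lower bound \eqref{min+}. This is the step where your plan fails to close: no such Orlicz-Hardy lemma adapted to the nonlocal kernel is set up, and you yourself flag the lack of a pointwise telescoping of $\mathscr{E}$ as the obstacle. The paper sidesteps this entirely. The actual argument starts from the dual representation $\|f^2\|_\Phi=\sup_{g\in\mathscr{G}}\int f^2 g\,\d\mu$, observes that for each $g$ (after the regularization $g_n=g+1/n$) the pairing $\int f^2 g_n\,\d\mu$ is the $L^2(\mu_{g_n})$-norm for a modified time-changed stable process with generator $\mathcal{L}_{g_n}=\tfrac{\mu(g_n)a}{g_n}\Delta^{\alpha/2}$, and then applies the already-established $L^2$-Poincar\'e result of \cite[Theorem 1.3]{W23} to each such process. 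The hypothesis $\delta(\Phi)<\infty$ translates, via \eqref{1phi} and \eqref{equiv norm}, into a bound $\widetilde\delta(g)\leqslant 2\delta(\Phi)$ uniform over $g\in\mathscr{G}$, and this uniformity is exactly what makes the supremum over $g$ finite. The constant $32\omega_\alpha$ is then just bookkeeping: a factor $4$ from \cite[Lemma 2.2]{myh02} ($\|f-\mu(f)\|_{(\Psi)}\leqslant 2\|f\|_{(\Psi)}$), a factor $4\omega_\alpha$ from the Poincar\'e lower bound $\lambda_{g_n,0}\geqslant 1/(4\omega_\alpha\widehat\delta(g_n))$, and a factor $2$ from \eqref{equiv norm}. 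Notice that Proposition~\ref{G-decrease} and \eqref{min+} are \emph{not} used here; the sufficiency half is self-contained modulo \cite{W23}.

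\emph{On necessity.} The paper does not test an explicit barbell profile. Instead it first deduces, from the Orlicz-Poincar\'e inequality and \cite[Lemma 2.2]{myh02}, a uniform positive lower bound $\inf_{g\in\mathscr{G}}\lambda_g([-1,1]^c)\geqslant\tfrac12(1-\sqrt{\mu([-1,1])})^2\lambda_\Phi>0$, and then, assuming $\delta(\Phi)=\infty$, builds for each $g$ the potential-theoretic test function $u_g^{x_0\pm}=G^{[-1,1]}(h^{x_0\pm}g)$, whose Dirichlet energy equals $\langle u_g^{x_0\pm},h^{x_0\pm}\rangle_{\mu_g}$ by \cite[Theorem 1.3.9]{OY13}; the lower Green-function bound \eqref{min+} (this is where Proposition~\ref{G-decrease} is actually used) forces $\lambda_g([-1,1]^c)\leqslant(K_\alpha\delta_g^\pm(x_0))^{-1}\to0$, a contradiction. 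Your direct route — test with $f_r(x)=f(x/r)$ for a fixed smooth profile $f$ vanishing near $0$ and equal to $1$ outside a compact, use the scaling $\mathscr{E}(f_r,f_r)=r^{1-\alpha}\mathscr{E}(f,f)$, and lower-bound $\|f_r-\mu(f_r)\|_{(\Psi)}^2$ by $(1-\mu(f_r))^2/\Phi^{-1}(1/\mu((-cr,cr)^c))$ via monotonicity of the gauge norm and \eqref{1phi} — does close, and is in fact more elementary than the paper's; it buys you simplicity at the cost of giving a non-sharp constant, whereas the paper's Green-operator test functions are the natural ones for which $\mathscr{E}(u,u)$ is computable exactly. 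But you should present it as what it is: a different argument, not a rescaling of the paper's.

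In short: your necessity direction is a legitimate alternative and worth keeping, but your sufficiency direction as written is a gap — you need either to actually prove the Orlicz-Hardy lemma you invoke, or to replace it by the duality reduction to $L^2$-Poincar\'e for the family of measures $\mu_g$, $g\in\mathscr{G}$, which is where the $32\omega_\alpha$ really comes from.
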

\prf
(1) Sufficiency. Assume that $\Phi$ is a $N$-function satisfying $\delta(\Phi)<\infty$. According to \cite[Section 1.2, Example 9]{RR02},
\begin{equation}\label{1phi}
\|\mathbf{1}_{(-|x|,|x|)^c}\|_{(\Phi)}=\frac{1}{\Phi^{-1}(1/\mu((-|x|,|x|)^c))}.
\end{equation}
Thus, combining this with \eqref{equiv norm} implies
\begin{equation}\label{delta-g}
\begin{split}
\infty>2\delta(\Phi)&=2\sup_{x}\left\{|x|^{\alpha-1}\|\mathbf{1}_{(-|x|,|x|)^c}\|_{(\Phi)}\right\}\\
&\geqslant \sup_{x}\left\{|x|^{\alpha-1}\|\mathbf{1}_{(-|x|,|x|)^c}\|_{\Phi}\right\}\\
&=\sup_{g\in\scr{G}}\sup_{x}\left\{|x|^{\alpha-1}\int_{(-|x|,|x|)^c}\frac{g(y)}{a(y)}\d y\right\}=:\sup_{g\in\scr{G}}\widetilde{\delta}(g).
\end{split}
\end{equation}

Now fix $g\in\mathscr{G}$ and let $\Phi_c$ be the complementary $N$-function of $\Phi$ given by \eqref{Psi}. Since $\Phi_c$ is convex and $\mu(\mathbb{R})=1$, by the definition of $\mathscr{G}$ and the Jensen's inequality we have
$$
\Phi_c(\mu(g))\leqslant \mu(\Phi_c(g))\leqslant 1.
$$
Applying $\Phi_c^{-1}$ to both side, it follows from the increase of $\Phi_c$  that
$\mu(g)\leqslant\Phi_c^{-1}(1)<\infty.$
 Now for any  $n\in\mathbb{N}_+$, let $g_n=g+1/n$. It is clear that   $0<\mu(g_n)<\infty$.
Therefore, we could define a family of new probability measures as
\begin{equation*}\label{def-mug}
\mu_{g_n}(\d x):=\frac{g_n(x)}{\mu(g_n)a(x)}\d x.
\end{equation*}

Consider the time-changed symmetric stable processes  with generators
$$
\mathcal{L}_{g_n}=\frac{\mu(g_n)a(x)}{g_n(x)}\Delta^{\alpha/2},
$$
which is symmetric with respect to measure $\mu_{g_n}$.  Then by \eqref{Diri form}, the associated Dirichlet form is $(\mathscr{E},\mathscr{F}_{g_n})$, where
 $$\mathscr{F}_{g_n}:=\{f\in L^2(\mu_{g_n}): \ \mathscr{E}(f,f)<\infty \}. $$
Since
$$
\widehat{\delta}(g_n):=\mu(g_n)^{-1}\sup_{x} \left\{|x|^{\alpha-1}\int_{(-|x|,|x|)^c}\frac{g_n(y)}{a(y)}\d y\right\}=\mu(g_n)^{-1}\widetilde{\delta}(g_n)<\infty,
$$
by \eqref{delta-g} and the definition of $g_n$,
it follows from \cite[Theorem 1.3]{W23} that Poincar\'{e} inequality for the process absorbed at 0
\begin{equation}\label{weight1}
	\mu_{g_n}(f^2)\leqslant \lambda_{g_n,0}^{-1}\mathscr{E}(f,f),\quad \forall f\in C_0^\infty(\mathbb{R})\text{ with }f(0)=0
\end{equation}
holds with the optimal constant
 \begin{equation*}
 \begin{split}
	\lambda_{g_n,0}&:=\inf \left\{\mathscr{E}(f, f): f\in\mathscr{F}_{g_n}, \mu_{g_n}(f^{2})=1, f(0)=0\right\}\\
&\ =\inf \left\{\mathscr{E}(f, f): f\in C_0^\infty(\mathbb{R}), \mu_{g_n}(f^{2})=1, f(0)=0\right\}>0.
\end{split}
\end{equation*}
Furthermore, $\lambda_{g_n,0}$ has estimate
\begin{equation}\label{lower-bound-Poincare}
\lambda_{g_n,0}\geqslant\frac{1}{4\omega_\alpha\widehat{\delta}(g_n)}>0,
\end{equation}
where $\omega_\alpha$ is the positive constant defined in  \eqref{omegaalpha}.
Therefore, applying \eqref{lower-bound-Poincare} to \eqref{weight1}, we have
\begin{equation}\label{weight2}
\mu(f^2g_n)\leqslant 4\omega_\alpha\widetilde{\delta}(g_n)\scr{E}(f,f),\quad \forall f\in C_0^\infty(\mathbb{R})\text{ with }f(0)=0.
\end{equation}
By letting $n\rightarrow\infty$ in \eqref{weight2}, and taking the supremum with respect to $g\in\mathscr{G}$,
we have
\begin{equation}\label{phi orlicz}
\|f^2\|_{\Phi}\leqslant 4\omega_\alpha\sup_{g\in\mathscr{G}}\widetilde{\delta}(g)\scr{E}(f,f),\quad \forall f\in C_0^\infty(\mathbb{R})\text{ with }f(0)=0.
\end{equation}
Here $4\omega_\alpha\sup_{g\in\mathscr{G}}\widetilde{\delta}(g)<\infty$ by \eqref{delta-g}.
Additionally,
by \cite[Lemma 2.2]{myh02},
$\|f-\mu(f)\|_{(\Psi)}\leqslant 2\|f\|_{(\Psi)}.$
Thus by combining this with \eqref{phi orlicz}, \eqref{equiv norm}  and \eqref{identity}, we have
\begin{equation}\label{poincare1}
\begin{split}
\|f-\mu(f)\|_{(\Psi)}^2&\leqslant
4\|f\|_{(\Psi)}^2=4\|f^2\|_{(\Phi)}\leqslant 4\|f^2\|_{\Phi}\\
&\leqslant 16\omega_\alpha\sup_{g\in\mathscr{G}}\widetilde\delta(g)\scr{E}(f,f),\quad \forall f\in C_0^\infty(\mathbb{R})\text{ with }f(0)=0.
\end{split}
\end{equation}
Since for any $f\in C_0^\infty(\mathbb{R})$, $f-f(0)$ satisfies the condition in \eqref{poincare1}. This implies that for any $f\in C_0^\infty(\mathbb{R})$, \eqref{poincare1} holds.  Hence,
$$
\|f-\mu(f)\|_{(\Psi)}^2\leqslant16\omega_\alpha \sup_{g\in\mathscr{G}}\delta(g)\scr{E}(f,f),\quad \forall f\in C_0^\infty(\mathbb{R}).
$$
Furthermore, since $C_0^\infty(\mathbb{R})$ is dense in $L^\Psi(\mu)$ by the convex of $\Phi$ and $\Delta_2$-condition \cite[Page 18]{RR02}, the above inequality yields that the Orlicz-Poincar\'e inequality \eqref{orlicz-poincare} holds, and the optimal constant $
		\lambda_\Phi\geqslant(32\omega_\alpha \delta(\Phi))^{-1}.$

(2) Necessity.
Assume that Orlicz-Poincar\'e inequality \eqref{orlicz-poincare} holds.
Note that by \cite[Lemma 2.2]{myh02} again, for any $f\in\mathscr{F}$ satisfying $f|_{A^c}=0$ with $\mu(A)<1$, we have
$$\|f\|_{(\Psi)}\leqslant\frac{1}{1-\sqrt{\mu(A)}}\|f-\mu(f)\|_{(\Psi)}.$$
Combining this with \eqref{orlicz-poincare}, one has

\begin{equation*}
\|f\|_{(\Psi)}^2\leqslant \left(\frac{1}{1-\sqrt{\mu(A)}}\right)^2\lambda_\Phi^{-1}\scr{E}(f,f),\quad\forall f\in\scr{F}\ \text{with }f|_{A^c}=0.
\end{equation*}
Furthermore, this together with \eqref{equiv norm} and \eqref{identity} gives that  for any $g\in\mathscr{G}$,
\begin{equation}\label{necessity proof}
\int_{\mathbb{R}}f^2g\d\mu\leqslant\|f^2\|_{\Phi}=\|f\|_{\Psi}^2\leqslant 2\|f\|_{(\Psi)}^2\leqslant2 \left(\frac{1}{1-\sqrt{\mu(A)}}\right)^2\lambda_\Phi^{-1}\scr{E}(f,f),\end{equation}
which implies Poincar\'e inequality
\begin{equation*}\label{local poincare}
\int_{\mathbb{R}}f^2\d\mu_{g}\leqslant \lambda_{g}(A)^{-1}\scr{E}(f,f),\quad \forall f\in\scr{F}\ \text{with }f|_{A^c}=0
\end{equation*}
holds with  the optimal constant
\begin{equation}\label{local Dirichlet eigen}
	\begin{split}
		\lambda_{g}(A)=\inf \{\mathscr{E}(f, f): f \in \mathscr{F}_g, \mu_{g}(f^{2})=1, f|_{A^c}=0\}>0.
	\end{split}
\end{equation}
Here $\mathscr{F}_g:=\{f\in L^2(\mu_g):\mathscr{E}(f,f)<\infty\}$ and
\begin{equation}\label{mu_g}
	\d\mu_{g}:=g\d\mu/\mu(g).
\end{equation}

In addition, \eqref{necessity proof} further implies
\begin{equation}\label{positive eigenvalue}
	\inf_{g\in\mathscr{G}}\lambda_{g}(A)\geqslant\frac{1}{2}(1-\sqrt{\mu(A)})^2\lambda_\Phi>0.
\end{equation}

We next finish the proof by a contradiction. For that, assume that $\delta(\Phi)=\infty$.
 Recall that $G_X^{[-1,1]}$ is the Green function of symmetric $\alpha$-stable process $X$ killed on the first entry into $[-1,1]$ which is defined in  \eqref{def-G} , and recall $h(x)$ is the function defined in \eqref{def-h}. It is easy to see with L'Hopital's rule that
$$
\lim _{x \rightarrow \infty} \frac{h(x)}{x^{\alpha-1}}=\frac{1}{\alpha-1}.
$$
It follows from  $\delta(\Phi)=\infty$ and the definition of $\delta(\Phi)$ in \eqref{def-delta} that $$\lim_{x\rightarrow\infty}\frac{h(x)}{\Phi^{-1}(1/\mu((-x,x)^c))}=\infty.$$
Thus applying \eqref{1phi} to this equality, we get that
$$\lim_{x\rightarrow\infty}h(x)\|\mathbf{1}_{(-|x|,|x|)^c}\|_{(\Phi)}=\infty,$$
i.e.,
\begin{equation}\label{contraction}
\lim_{x\rightarrow\infty}\sup_{g\in\mathscr{G}}	h(x)\int_{(-|x|,|x|)^c}g(y) \mu(\mathrm{d} y)=\infty.
\end{equation}

In the following, for any fixed $x_0>1$, we takes two functions
$$
h^{x_0+}(x)=h(x\wedge x_0)\mathds{1}_{\{x>0\}} \quad\text{and}\quad h^{x_0-}(x)=h(x\vee (-x_0))\mathds{1}_{\{x<0\}},
$$
and for any  $g\in \mathscr{G}$, define
$$
u_{g}^{x_0\pm}(x)=\int_{(-1,1)^c} G(x, y) h^{x_0\pm}(y)g(y) \mu(\mathrm{d} y).
$$
By \cite[Theorem 1.3.9]{OY13}, we see that
\begin{equation}\label{dirichlet}
\mathscr{E}\left(u^{x_{0}\pm}_g, u^{x_{0}\pm}_g\right)=\left\langle u_{g}^{x_{0}\pm}, h^{x_{0}\pm}\right\rangle_{\mu_g},
\end{equation}
where $\mu_g$ is the measure defined in \eqref{mu_g}  and $\langle\cdot,\cdot\rangle_{\mu_g}$ is the inner product on $L^2(\mu_g)$. We also define
$$
\delta^+_{g}\left(x_{0}\right)=h(x_0)\int_{x_0}^\infty g(y) \mu(\d y)\quad \text{and}\quad \delta^-_{g}\left(x_{0}\right)=h(x_0)\int_{-\infty}^{-x_0}g(y) \mu(\d y).
$$


We now claim that $G(x,y)\geqslant K_\alpha h^{x_0+}(x)$ for all $x>1$ and $y>x_0$.
Indeed, when $x\in (1, x_0]$, by \eqref{min+},
$$
G(x,y)\geqslant K_\alpha h(x)\geqslant K_\alpha h^{x_0+}(x) \quad \text{for any }y>x_0.
$$
When $x\in (x_0,\infty)$, by the symmetry of $G$ in \eqref{Gsymmetric} and \eqref{min+}, we also have
$$
G(x,y)=G(y,x)\geqslant K_\alpha h(y)\geqslant K_\alpha h(x_0)=K_\alpha h^{x_0+}(x)\quad \text{for all }y\in (x_0,x],
$$
and
$$
G(x,y)\geqslant K_\alpha h(x)\geqslant  K_\alpha h(x_0)=K_\alpha h^{x_0+}(x)\quad \text{for all } y\in(x,\infty).
$$
Therefore, the claim holds.
Thus one can find that for $x>1$,
$$
\begin{aligned}
	u_{g}^{x_0+}(x) &\geqslant \int_{x_0}^\infty G(x, y) h^{x_0}(y)g(y) \mu(\mathrm{d} y)
= h(x_0)\int_{x_0}^\infty G(x, y) g(y) \mu(\mathrm{d} y)\\
& \geqslant K_\alpha h^{x_0+}(x) h(x_{0}) \int_{x_0}^\infty g(y) \mu(\mathrm{d} y)=K_\alpha\delta_g^+(x_0) h^{x_0+}(x).
\end{aligned}
$$
Combining this with the fact  $h^{x_{0}+}(x)=0$ for $x\leqslant 1$, we get that
\begin{equation*}
\begin{split}
\langle u_{g}^{x_{0}+}, h^{ x_{0}+} \rangle_{\mu_g}
&=\int_1^\infty \left(u_{g}^{x_{0}+}(x)\right)^{2} \frac{h^{ x_{0}+}(x)} {u_{g}^{x_{0}+}(x)}\mu_g(\d x) \\
&\leqslant \left(K_\alpha \delta^+_{g}(x_{0})\right)^{-1}\int_1^\infty \left(u_{g}^{x_{0}+}(x)\right)^{2}\mu_g(\d x)\leqslant \left(K_\alpha \delta^+_{g}(x_{0})\right)^{-1}\|u_{g}^{x_{0}+}\|_{L^2(\mu_g)}^2.
\end{split}
\end{equation*}
Furthermore, by applying this to \eqref{local Dirichlet eigen} and \eqref{dirichlet}, we see that
\begin{equation*}
	\begin{aligned}
		\lambda_{g}\left([-1,1]^{c}\right) & \leqslant \frac{\mathscr{E}\left(u^{x_{0}+}_g, u^{x_{0}+}_g\right)}{\|u^{x_{0}+}_g\|_{L^2(\mu_g)}^2}=\frac{\left\langle u_{g}^{x_{0}+}, h^{x_{0}+}\right\rangle_{\mu}}{\|u^{x_{0}+}_g\|_{L^2(\mu_g)}^2 }
		\leqslant\left(K_\alpha\delta^+_{g}(x_{0})\right)^{-1},
	\end{aligned}
\end{equation*}
which implies
\begin{equation}\label{lambdax_0+}
\inf_{g\in\mathscr{G}}\lambda_{g}\left([-1,1]^{c}\right)\leqslant\left(K_\alpha\sup_{g\in\mathscr{G}}\delta^+_{g}(x_{0})\right)^{-1}.
\end{equation}

Using the similar arguments to $u_g^{x_0-}$, we also have
\begin{equation}\label{lambdax_0-}
\inf_{g\in\mathscr{G}}\lambda_{g}\left([-1,1]^{c}\right)\leqslant\left(K_\alpha\sup_{g\in\mathscr{G}}\delta^-_{g}(x_{0})\right)^{-1}.
\end{equation}
In addition, it follows from \eqref{contraction} that  either
\begin{equation}\label{infty+-}
\lim_{x_0\rightarrow\infty}\sup_{g\in\mathscr{G}}\delta^+_g(x_0)=\infty \quad\text{or}\quad \lim_{x_0\rightarrow\infty}\sup_{g\in\mathscr{G}}\delta^-_g(x_0)=\infty.
\end{equation}
Hence by letting $x_{0} \rightarrow \infty$ in \eqref{lambdax_0+} and \eqref{lambdax_0-},  we get
$\inf_{g\in\mathscr{G}}\lambda_{g}\left([-1,1]^{c}\right)=0$ by \eqref{infty+-}, This derives a contradiction with \eqref{positive eigenvalue}. Hence, $\delta(\Phi)<\infty$. So we finish the proof of necessity.
\deprf
\begin{remark}\label{widetilde-lambda}
In \cite[Proposition 2.3]{myh02}, the author defined
\begin{equation*}\label{widetilde-lmd}
\widetilde{\lambda}_\Psi=\inf\left\{\frac{\mathscr{E}(f,f)}{\|f\|_{(\Psi)}^2}: f\in\mathscr{F}, f(0)=0\right\}.
\end{equation*}
In a sense, $\widetilde{\lambda}_\Psi$ is comparable with the logarithmic Sobolev constant when $\Psi(x)=x^2\log(1+x^2)$ .
By \eqref{poincare1} and \eqref{delta-g}, we can obtain the lower bound for $\widetilde{\lambda}_\Psi$: $$\widetilde{\lambda}_\Psi\geqslant\frac{1}{8\omega_\alpha \delta(\Phi)}.$$
\end{remark}
\section{ Proofs of Theorems \ref{thm-logS}, \ref{thm-Nash} and \ref{interp}}\label{proofs}

\ \ \ \ \  In this section, we give the proofs Theorems \ref{thm-logS}, \ref{thm-Nash} and \ref{interp} for time-changed symmetric stable process $Y$.
Recall that $(\mathscr{E},\mathscr{F})$  is the Dirichlet form defined by  \eqref{Diri form}, and  $\mu$ is the reversible probability measure of $Y$, Note that we assume $\mu(\mathbb{R})=1$.

Similar to the idea in diffusion and birth-death processes (see \cite{myh02}), we choose $\Psi(x)=x^2\log(1+x^2)$ in  \eqref{main-orlicz}, we can prove the  sufficient and necessary condition for logarithmic Sobolev inequality as follows.
\medskip 

\noindent{\bf Proof of Theorem \ref{thm-logS}}.
By \cite{myh02}, the logarithmic Sobolev inequality \eqref{Log-Sobolev} is equivalent to the following Orlicz-Poincar\'{e} inequality:
$$\|f-\mu(f)\|_{(\Psi)}^2\leqslant \frac{2}{d}\scr{E}(f,f),$$
where $\Psi(x)=x^2\log(1+x^2)$ and $d>0$ is a positive constant. 
Therefore, by choosing $\Phi(x)=|x|\log(1+|x|)$, $\Psi(x)=\Phi(x^2)$ in Theorem \ref{main-orlicz}, we know that \eqref{Log-Sobolev} is equivalent to
\begin{equation*}\label{log-S1}
 \sup_ {x\in \mathbb{R}}\frac{|x|^{\alpha-1}}{\Phi^{-1}(1/\mu((-|x|,|x|)^c))}=\delta(\Phi)<\infty.
\end{equation*}

 Now let $x_0>0$ be the constant such that $\mu((-x_0,x_0)^c)=1/2$. By \cite[Lemma 5.4]{BG99}, for all $t\geqslant1/\mu((-x_0,x_0)^c)=2$,
\begin{equation*}\label{Phi-inverse}
  \frac{t}{2\log t}\leqslant\Phi^{-1}(t)\leqslant \frac{2t}{\log t}.
\end{equation*}
This implies that for any $|x|>x_0$,
\begin{equation*}\label{Psi^-1}
  \Phi^{-1}\left(\frac{1}{\mu((-|x|,|x|)^c)}\right)\asymp\frac{1}{\mu((-|x|,|x|)^c)}\frac{1}{\log \left(\frac{1}{\mu((-|x|,|x|)^c)}\right)},
\end{equation*}
where ``$a\asymp b$'' means that there exist constants $c_1,c_2>0$ such that $c_1a\leqslant b\leqslant c_2a$.
Additionally, note that $|x| ^{\alpha-1}\mu((-|x|,|x|)^c)\log(\mu((-|x|,|x|)^c)^{-1})$ is bounded for $0\leqslant |x|\leqslant x_0$. Thus \eqref{Log-Sobolev} holds if and only if
\begin{equation*}
  \begin{split}
\sup_{x\in\mathbb{R}}|x| ^{\alpha-1}\mu((-|x|,|x|)^c)\log(\mu((-|x|,|x|)^c)^{-1})<\infty.
  \end{split}
\end{equation*}

\deprf

By taking $\Psi(x)=|x|^{2r}/r$ with $r=\epsilon/(\epsilon-2)$ for some $\epsilon>2$, we also can present the

\medskip 
\noindent{\bf Proof of Theorem \ref{thm-Nash}}.
According to \cite[Section 6.5]{cmf05}, when $\epsilon>2$, Nash inequality is equivalent to the following Sobolev-type inequality:
$$\|f-\pi(f)\|_{L^{\epsilon/(\epsilon-2)}(\mu)}^2\leqslant A_N^{-1}\mathscr{E}(f,f).$$
This and \cite[Section 6.5]{cmf05} imply that when $\epsilon>2$, Nash inequality \eqref{Nash} is equivalent to the following Orlicz-Poincar\'{e} inequality:
\begin{equation*}		
\|(f-\mu(f))^2\|_{(\Phi)}=\|f-\mu(f)\|_{(\Psi)}^2\leqslant \lambda_\Phi^{-1}\scr{E}(f,f)
\end{equation*}
where $\Phi(x):=|x|^{r}/r$ and $\Psi(x):=|x|^{2r}/r$ ($r=\epsilon/(\epsilon-2)$).
In this case, due to $\Phi^{-1}(t)=(r|t|)^{1/r}$, we have
\begin{equation*}
  \Phi^{-1}\left(\frac{1}{\mu((-|x|,|x|)^c)}\right)=r^{1/r}\left(\frac{1}{\mu((-|x|,|x|)^c)}\right)^{1/r}.
\end{equation*}
Therefore, Theorem \ref{main-orlicz} yields that Nash inequality \eqref{Nash} holds if and only if
$$\delta(\Phi):=\sup_ {x\in \mathbb{R}}\frac{|x|^{\alpha-1}}{\Phi^{-1}(1/\mu((-|x|,|x|)^c))}=\sup_xr^{-(1/r)}|x|^{\alpha-1}\mu((-|x|,|x|)^c)^{1/r}<\infty,$$
i.e., $$\sup_{x\in \mathbb{R}}|x|^{\alpha-1}\mu((-|x|,|x|)^c)^{(\epsilon-2)/\epsilon}<\infty.\quad\quad\quad\quad \text{\deprf}$$

Finally, by choosing $\Phi(x)=|x|\log^\xi(1+|x|)$ and $\Psi(x)=\Phi(x^2)$ in Theorem \ref{main-orlicz}, we can present the
\medskip

\noindent{\bf Proof of Theorem \ref{interp}}.
Let $\Phi(x)=|x|\log^\xi(1+|x|)$. We first prove that for any $t\geqslant2$,
\begin{equation}\label{lemma}
	\frac{t}{2\log^\xi t}\leqslant\Phi^{-1}(t)\leqslant \frac{2t}{\log^\xi t},
\end{equation}
which is a similar result with \cite[Lemma 5.4]{BG99}.
In fact, the first inequality in \eqref{lemma} is equivalent to
$$
\Phi\left(\frac{ t}{2\log^\xi t}\right)=\frac{ t}{2\log^\xi t} \log^\xi \left(1+\frac{ t}{2\log^\xi t}\right) \leqslant t.
$$
Due to $1/(2 \log^\xi t) \leqslant 1$ for $t \geqslant 2$, we only need to show that
$$
\frac{ t}{2\log^\xi t} \log^\xi (1+t) \leqslant t,
$$
i.e. $1+t\leqslant t^{\gamma_1}$ with $\gamma_1:=2^{1/\xi}\in(2,\infty)$, which is evident.

In addition, the right side of the inequality \eqref{lemma} {is equivalent to }
$$
\frac{2 t}{\log^\xi t} \log^\xi \left(1+\frac{2 t}{\log^\xi t}\right) \geqslant t,
$$
which can be rewritten as
$$
1+\frac{2 t}{\log^\xi t} \geqslant t^{\gamma_2}
$$
with $\gamma_2:=2^{-1/\xi}\in (0,1/2)$. Hence we only need to prove that
\begin{equation}\label{geq}
	\frac{2 }{\log^\xi t} \geqslant t^{\gamma_2-1}.
\end{equation}
To achieve this, we let $f(t):=2t^{1-\gamma_2}-\log^\xi t$. It is clear that $$f'(t)=2(1-\gamma_2)t^{-\gamma_2}-\xi t^{-1}\log^{\xi-1}t\geqslant\frac{1}{\sqrt{t}}-\frac{1}{2\log2}\frac{1}{t}\geqslant0\quad \text{for}\ t\geqslant2. $$
Thus we have $f(t)\geqslant f(2)\geqslant0$. That is, \eqref{geq} is proved. So  \eqref{lemma} holds.

Now by \cite[Corollary 6.2.2]{wfy05}, the inequality \eqref{interpolation} is equivalent to the Orlicz Poincar\'e inequality \eqref{orlicz-poincare} holds for $\Phi(x)=|x|\log^\xi(1+|x|)$, $\Psi(x)=\Phi(x^2)$.
Moreover, from \eqref{lemma},
 for any $|x|>x_0$ (where $x_0>0$ is the constant such that $\mu((-x_0,x_0)^c)=1/2$) we have
\begin{equation*}
\Phi^{-1}\left(\frac{1}{\mu((-|x|,|x|)^c)}\right)\asymp\frac{1}{\mu((-|x|,|x|)^c)}\frac{1}{\log^\xi \left(\frac{1}{\mu((-|x|,|x|)^c)}\right)},
\end{equation*}
Note that $|x| ^{\alpha-1}\mu((-|x|,|x|)^c)\log^\xi(\mu((-|x|,|x|)^c)^{-1})$ is bounded for all  $0\leqslant |x|\leqslant x_0$. Thus  \eqref{interpolation} holds if and only if
\begin{equation*}
	\begin{split}
		\sup_{x\in\mathbb{R}}|x| ^{\alpha-1}\mu((-|x|,|x|)^c)\log^\xi(\mu((-|x|,|x|)^c)^{-1})<\infty.\quad\quad \quad\quad \text{\deprf}
	\end{split}
\end{equation*}

\section{Super-Poincar\'e inequality}\label{SPI-section}

\ \ \ \ \ \ In this section, we prove the super-Poincar\'e inequalities for the time-changed symmetric stable process $Y$, i.e. Theorem \ref{SPI}. To achieve this,
recall that $\mu(\d x)=a(x)^{-1}\d x$ is the reversible measure of the process $Y$, and $(\mathscr{E},\mathscr{F})$ is the associated Dirichlet form defined in \eqref{Diri form}.

For $N>n>0$,  we now let $(\mathscr{E}^{n,N},\mathscr{F}^{n,N})$ be the part Dirichlet form of $(\mathscr{E},\mathscr{F})$ on $A_{n,N}:=(-N,N)\setminus[-n,n]$ given by
$$\mathscr{E}^{n,N}:=\mathscr{E}\  \text{on}\ \mathscr{F}^{n,N}\times\mathscr{F}^{n,N} \quad\text{and}\quad   \mathscr{F}^{n,N}:=\left\{f\in \mathscr{F}:  \widetilde{f}=0,\ \text{q.e. on}\ A_{n,N}^c \right\},$$
where q.e. stands for quasi-everywhere, 
and $\widetilde{f}$ is a quasi-continuous modification of $f$ (cf. \cite[Section 2.2 and Page 100]{OY13}).

For the part Dirichlet form, we have the following estimate.

\begin{lemma}\label{lemma1}
	For fixed $N>n>0$, we have
	$$\mathscr{E}^{n,N}(f\mathbf{1}_{A_{n,N}},f\mathbf{1}_{A_{n,N}})\leqslant \left(\frac{6C_\alpha}{\kappa\alpha}+1\right)\mathscr{E}(f,f)\quad \text{for all }f\in\mathscr{F},
$$
	where
	\begin{equation}\label{kappa}
   	   \kappa:=2^\alpha\Gamma\left(\frac{1+\alpha}{4}\right)\Gamma\left(\frac{1-\alpha}{4}\right)^{-2}>0.
	\end{equation}
\end{lemma}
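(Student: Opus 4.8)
\medskip\noindent\emph{Proof strategy.}\ The plan is to split the Dirichlet energy of the truncation $g:=f\mathbf{1}_{A_{n,N}}$ into a part that ``lives inside'' $A_{n,N}$, which is dominated outright by $\mathscr{E}(f,f)$, and a ``crossing'' part, which is absorbed into $\mathscr{E}(f,f)$ by a one-dimensional fractional Hardy inequality whose sharp constant is precisely the $\kappa$ of \eqref{kappa}. Write $A:=A_{n,N}=(-N,-n)\cup(n,N)$. By the definition of the part Dirichlet form together with \eqref{Diri form}, whenever $g\in\mathscr{F}^{n,N}$ one has $\mathscr{E}^{n,N}(g,g)=\mathscr{E}(g,g)=\frac{C_\alpha}{2}\int_{\mathbb{R}}\int_{\mathbb{R}}\frac{(g(x)-g(y))^2}{|x-y|^{1+\alpha}}\,\d x\,\d y$. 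Decomposing $\mathbb{R}^2$ according to whether each variable lies in $A$ or in $A^c$, and using $g=f$ on $A$ and $g\equiv0$ on $A^c$ (so that the block $A^c\times A^c$ contributes nothing while the two cross blocks coincide), one gets
$$
\mathscr{E}^{n,N}(g,g)=\underbrace{\frac{C_\alpha}{2}\int_{A}\int_{A}\frac{(f(x)-f(y))^2}{|x-y|^{1+\alpha}}\,\d x\,\d y}_{=:\,T_1}\;+\;\underbrace{C_\alpha\int_{A}f(x)^2\Big(\int_{A^c}\frac{\d y}{|x-y|^{1+\alpha}}\Big)\,\d x}_{=:\,T_2}.
$$
Since $A\times A\subset\mathbb{R}^2$ we have $T_1\leqslant\mathscr{E}(f,f)$, which accounts for the additive $1$. (If the left-hand side is infinite there is nothing to prove; when it is finite, $g\in\mathscr{F}^{n,N}\subset C(\mathbb{R})$ forces $f(\pm n)=f(\pm N)=0$, exactly the vanishing that makes the Hardy step below legitimate.)

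It remains to bound $T_2$ by $\tfrac{6C_\alpha}{\kappa\alpha}\mathscr{E}(f,f)$. For $x\in A$ the two nearest pieces of $A^c$ lie at distances $|x|-n$ and $N-|x|$, so the inner integral satisfies the elementary radial estimate $\int_{A^c}|x-y|^{-1-\alpha}\,\d y\leqslant\tfrac1\alpha\big((|x|-n)^{-\alpha}+(N-|x|)^{-\alpha}\big)$. This reduces the control of $T_2$ to estimating $\int f(x)^2\,\mathrm{dist}(x,\{\pm n,\pm N\})^{-\alpha}$-type integrals over the two components of $A$, i.e.\ to a fractional Hardy inequality at the singular endpoints. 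The inequality I would use is the Hardy inequality for the censored (regional) $\alpha$-stable form on a half-line: for $\alpha\in(1,2)$ and $u$ supported in $[a,\infty)$ with finite energy,
$$
\kappa\int_{a}^{\infty}\frac{u(x)^2}{(x-a)^{\alpha}}\,\d x\;\leqslant\;\frac{C_\alpha}{2}\int_{a}^{\infty}\int_{a}^{\infty}\frac{(u(x)-u(y))^2}{|x-y|^{1+\alpha}}\,\d x\,\d y\;\leqslant\;\mathscr{E}(u,u),
$$
together with its mirror image on $(-\infty,a]$. I would establish it via the ground-state substitution $u(x)=(x-a)^{(\alpha-1)/2}v(x)$ — the exponent $(\alpha-1)/2$ being exactly the one carried by the test functions $f_n$ of Theorem \ref{upper bound} — which recasts the Gagliardo seminorm as a nonnegative weighted Dirichlet form in $v$ plus a remainder whose multiplicative constant works out to be precisely $\kappa$; alternatively one may quote the known value of this sharp constant. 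Applying it at the four endpoints $\pm n,\pm N$ (using the mirror form at $N$ and $-n$) to $f$ restricted to $(n,N)$ and $(-N,-n)$, and summing, yields $T_2\leqslant\tfrac{6C_\alpha}{\kappa\alpha}\mathscr{E}(f,f)$; together with $T_1\leqslant\mathscr{E}(f,f)$ this proves the lemma.

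I expect the Hardy step to be the main obstacle. On the one hand, the ground-state computation has to be pushed through carefully enough to produce the exact constant $\kappa=2^\alpha\Gamma(\tfrac{1+\alpha}{4})\Gamma(\tfrac{1-\alpha}{4})^{-2}$ of \eqref{kappa} rather than a worse one; on the other, the elementary bookkeeping — the $1/\alpha$ from the tail $\int_{r>d}r^{-1-\alpha}\,\d r=\tfrac1\alpha d^{-\alpha}$, the splitting $\mathrm{dist}(x,\{\pm n,\pm N\})^{-\alpha}\leqslant(|x|-n)^{-\alpha}+(N-|x|)^{-\alpha}$, and the contributions of the two components of $A_{n,N}$ — must be tracked so that all the constants assemble to exactly $\tfrac{6C_\alpha}{\kappa\alpha}+1$. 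The remaining ingredients (the energy decomposition and the trivial bound $T_1\leqslant\mathscr{E}(f,f)$) are routine.
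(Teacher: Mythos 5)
Your initial decomposition — splitting $\mathscr{E}^{n,N}(g,g)$ for $g=f\mathbf{1}_{A_{n,N}}$ into an inner block $T_1\leqslant\mathscr{E}(f,f)$ and a crossing block $T_2$ — agrees with the paper's split into $I_1$ and $I_2$, and the elementary estimate of the inner integral over $A^c$ is the same bookkeeping (though you dropped the $(N+|x|)^{-\alpha}$ contribution from the far piece $(-\infty,-N]$; the paper keeps it, which is precisely why $I_{22}$ costs $4C_\alpha/(\kappa\alpha)$ rather than $2C_\alpha/(\kappa\alpha)$). The real divergence — and the gap — is in the Hardy step. You propose a \emph{regional} fractional Hardy inequality on a half-line, $\kappa\int_a^\infty u^2(x-a)^{-\alpha}\,\d x\leqslant\frac{C_\alpha}{2}\int_a^\infty\int_a^\infty\frac{(u(x)-u(y))^2}{|x-y|^{1+\alpha}}\,\d x\,\d y$, for $u$ supported in $[a,\infty)$. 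That inequality is not true with the Herbst constant $\kappa$ of \eqref{kappa}: the regional (censored) Gagliardo form is strictly smaller than the full one for such $u$, differing by exactly $C_\alpha\alpha^{-1}\int_a^\infty u^2(x-a)^{-\alpha}\,\d x$, so the sharp regional constant is strictly below $\kappa$ (it is a different Beta-type expression in the Bogdan--Dyda literature, not the Herbst constant), and your ground-state computation would not come out to $\kappa$. The vanishing $f(\pm n)=f(\pm N)=0$ you invoke also does not make $f$ supported in a half-line, so the regional inequality would not even be applicable to $f$ without further truncation.

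The paper sidesteps all of this. It applies the \emph{full-line} fractional Hardy--Rellich inequality \eqref{HRI}, $\kappa\int_{\mathbb{R}}g(x)^2|x|^{-\alpha}\,\d x\leqslant\mathscr{E}(g,g)$ (valid for every $g\in\mathscr{F}$, cited from Herbst, Beckner, Yafaev), to the translated/reflected copies $g(x)=f(\pm x\pm n)$ and $g(x)=f(\pm x\pm N)$, and then uses the translation and reflection invariance of $\mathscr{E}$ to conclude $\int_n^\infty f(x)^2(x-n)^{-\alpha}\,\d x\leqslant\kappa^{-1}\mathscr{E}(f,f)$ and the three analogues. This needs no support restriction, no vanishing at $\pm n,\pm N$, and no regional form; summing the resulting six copies of $\kappa^{-1}\mathscr{E}(f,f)$ (two from the $(-n,n)$ piece, four from the $[-N,N]^c$ piece) gives $T_2\leqslant\frac{6C_\alpha}{\kappa\alpha}\mathscr{E}(f,f)$. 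You should replace the regional half-line Hardy step by this translate-and-apply-full-line-Hardy argument; the rest of your skeleton then goes through.
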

\prf
For any $f\in\mathscr{F}$, it is clear that $f\mathbf{1}_{A_{n,N}}\in\mathscr{F}^{n,N}$. Therefore,

\begin{equation}\label{part-n}
	\begin{split}
		\mathscr{E}^{n,N}(f\mathbf{1}_{A_{n,N}},f\mathbf{1}_{A_{n,N}})=&\frac{1}{2}C_{\alpha} \int_{A_{n,N}} \int_{A_{n,N}}\frac{(f(x)-f(y))^2}{|x-y|^{1+\alpha}}\mathrm{d} x \mathrm{d} y\\
		&+C_{\alpha}\int_{A_{n,N}}f(x)^2\left(\int_{A_{n,N}^c}\frac{\d y}{|x-y|^{1+\alpha}}\right)\d x\\
		=&:I_1+I_2.
	\end{split}
\end{equation}
We now split the term $I_2$ into the following two terms:
\begin{equation*}\begin{split}
I_{21}&:=C_{\alpha}\int_{A_{n,N}}f(x)^2\left(\int_{-n}^n\frac{\d y}{|x-y|^{1+\alpha}}\right)\d x,\\
I_{22}&:=C_{\alpha}\int_{A_{n,N}}f(x)^2\left(\int_{[-N,N]^c}\frac{\d y}{|x-y|^{1+\alpha}}\right)\d x.
\end{split}\end{equation*}

Let us first estimate the term $I_{21}$. Note that $A_{n,N}\subset (-n,n)^c$, thus
\begin{equation}\label{I_21}
\begin{split}
	I_{21}&\leqslant C_{\alpha}\left[\int_n^{\infty}f(x)^2\left(\int_{-n}^n\frac{\d y}{|x-y|^{1+\alpha}}\right)\d x+\int_{-\infty}^{-n}f(x)^2\left(\int_{-n}^n\frac{\d y}{|x-y|^{1+\alpha}}\right)\d x\right]\\
&=\frac{C_{\alpha}}{\alpha}\left[\int_{n}^\infty f(x)^2\left(\frac{1}{(x-n)^\alpha}-\frac{1}{(x+n)^\alpha}\right)\d x\right.\\
&\quad \quad \quad \quad \left.+\int_{-\infty}^{-n} f(x)^2\left(\frac{1}{(n-x)^\alpha}-\frac{1}{(-n-x)^\alpha}\right)\d x\right]\\
&=\frac{C_{\alpha}}{\alpha}\int_{n}^\infty \left(f(x)^2+f(-x)^2\right)\left(\frac{1}{(x-n)^\alpha}-\frac{1}{(x+n)^\alpha}\right)\d x\\
&\leqslant\frac{C_{\alpha}}{\alpha}\int_{n}^\infty \frac{f(x)^2+f(-x)^2}{(x-n)^\alpha}\d x.
\end{split}
\end{equation}
In addition, it follows from \cite[(2.6)]{H77}, \cite[Theorem 2]{B95} or \cite{Y99} that the following Hardy-Rellich inequality holds for $(\mathscr{E},\mathscr{F})$:
\begin{equation}\label{HRI}
\kappa\int_\mathbb{R}\frac{g(x)^2}{|x|^\alpha}\d x\leqslant\mathscr{E}(g,g)\quad \text{for all } g\in \mathscr{F},
\end{equation}
where $\kappa$ is the constant defined in \eqref{kappa}.  Therefore, by letting $g(x)=f_n(x):=f(x+n)$ in \eqref{HRI}, we arrive at
$$
\int_n^\infty\frac{f(x)^2}{(x-n)^\alpha}\d x=\int_0^\infty\frac{f_n(x)^2}{x^\alpha}\d x\leqslant\frac{1}{\kappa}\mathscr{E}(f_n,f_n)=\frac{1}{\kappa}\mathscr{E}(f,f).
$$
Similarly,
$$\int_n^\infty\frac{f(-x)^2}{(x-n)^\alpha}\d x\leqslant\frac{1}{\kappa}\mathscr{E}(f,f).$$
Applying these two inequities to \eqref{I_21}, we can conclude that
\begin{equation}\label{I21}
I_{21}\leqslant\frac{2C_{\alpha}}{\kappa\alpha}\mathscr{E}(f,f).
\end{equation}

We next turn to the term $I_{22}$. Similar to the calculation in  \eqref{I_21}, we also have
$$I_{22}\leqslant \frac{C_{\alpha}}{\alpha}\int_{n}^N (f(x)^2+f(-x)^2)\left(\frac{1}{(N-x)^\alpha}+\frac{1}{(N+x)^\alpha}\right)\d x.$$
By using Hardy-Rellich inequality \eqref{HRI} again, we can see that
\begin{equation}\label{I22}I_{22}\leqslant\frac{4C_{\alpha}}{\kappa\alpha}\mathscr{E}(f,f).
\end{equation}

Additionally, note that $I_1\leqslant\mathscr{E}(f,f)$. So combining this with \eqref{part-n}, \eqref{I21} and \eqref{I22} yields that
$$\mathscr{E}^{n,N}(f\mathbf{1}_{A_{n,N}},f\mathbf{1}_{A_{n,N}})=I_1+I_2\leqslant\left(\frac{6C_\alpha}{\kappa\alpha}+1\right)\mathscr{E}(f,f).\quad \quad\quad\quad \square$$

Next, let us recall that $(P_t^{A_{n,N}^c})_{t\geq 0}$ is the sub-Markov semigroup of the process $Y$ killed on the first entry into $A_{n,N}^c$. Denote by $(\mathcal{L}^{n,N},\mathcal{D}(\mathcal{L}^{n,N}))$  and $G^{n,N}$ the corresponding infinitesimal generator and Green operator, respectively. We also recall that the function $f_n(x)=(|x|-2^{-\frac{1}{\alpha-1}}n)^{(\alpha-1)/2}$ on $[-n,n]^c$ and $\delta_n$ is the constant defined in \eqref{delta-n}, that is,
$$\delta_n=\sup_{|x|>n}\left\{(|x|-2^{-\frac{1}{\alpha-1}}n)^{\alpha-1}\mu((-|x|,|x|)^c)\right\}.$$

According to Theorem \ref{upper bound}, we can establish the following lemma.
\begin{lemma}\label{lemma2}
	 For any $\varepsilon>0$ and $N>n>0$, let $W_{n,N}(x)=G^{n,N}f_n(x)$ on $A_{n,N}$.
	 If $\delta_n<\infty$, then we have
	$$\frac{\mathcal{L}^{n,N}W_{n,N}(x)}{W_{n,N}(x)+\varepsilon}\leqslant-\frac{1}{\Theta(n)+\varepsilon/f_n(x)}\quad\text{for all } x\in A_{n,N},$$
	where $\Theta(n):=4c_\alpha\delta_n/{(\alpha-1)}.$
\end{lemma}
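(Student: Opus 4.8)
The plan is to exploit the defining property of the Green operator, namely that $G^{n,N}$ inverts $-\mathcal{L}^{n,N}$, so that $\mathcal{L}^{n,N}W_{n,N} = -f_n$ on $A_{n,N}$. Granting this, the claimed bound becomes
$$
\frac{-f_n(x)}{W_{n,N}(x)+\varepsilon}\leqslant -\frac{1}{\Theta(n)+\varepsilon/f_n(x)},
$$
which, after cross-multiplying by the positive quantities $f_n(x)$, $W_{n,N}(x)+\varepsilon$ and $\Theta(n)+\varepsilon/f_n(x)$, is equivalent to
$$
f_n(x)\Theta(n)+\varepsilon \geqslant W_{n,N}(x)+\varepsilon,\qquad\text{i.e.}\qquad W_{n,N}(x)\leqslant \Theta(n) f_n(x).
$$
Thus the whole statement reduces to the pointwise upper bound $G^{n,N}f_n(x)\leqslant \Theta(n) f_n(x)$ on $A_{n,N}$, with $\Theta(n)=4c_\alpha\delta_n/(\alpha-1)$.

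To obtain that bound I would compare the killed process on $A_{n,N}=(-N,N)\setminus[-n,n]$ with the killed process on the larger open set $[-n,n]^c$: since $A_{n,N}^c\supseteq[-n,n]$, the first exit time from $A_{n,N}$ is dominated by the first hitting time of $[-n,n]$, hence $P_t^{A_{n,N}^c}(x,\cdot)\leqslant P_t^{[-n,n]}(x,\cdot)$ for $x\in A_{n,N}$, and integrating in $t$ gives $G^{n,N}f_n(x)\leqslant G^{[-n,n]}f_n(x)$ for every nonnegative $f_n$. Now Theorem \ref{upper bound} applies directly: under the hypothesis $\delta_n<\infty$ it yields $G^{[-n,n]}f_n(x)\leqslant \frac{4c_\alpha\delta_n}{\alpha-1} f_n(x)=\Theta(n) f_n(x)$ for all $x\in[-n,n]^c$, and in particular for $x\in A_{n,N}$. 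Combining the two inequalities gives $W_{n,N}(x)=G^{n,N}f_n(x)\leqslant \Theta(n) f_n(x)$, which is exactly what the algebra above needs.

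The remaining point, and the one requiring a little care rather than a routine estimate, is the identity $\mathcal{L}^{n,N}W_{n,N}=-f_n$ on $A_{n,N}$: one must check that $W_{n,N}=G^{n,N}f_n$ lies in $\mathcal{D}(\mathcal{L}^{n,N})$ and that $-\mathcal{L}^{n,N}G^{n,N}=\mathrm{id}$ holds in the appropriate sense. This follows from the standard resolvent/potential theory for the killed semigroup $(P_t^{A_{n,N}^c})_{t\geqslant0}$: $A_{n,N}$ is bounded and bounded away from $0$, the killed process is transient with a bounded Green operator (indeed $G^{n,N}f_n\leqslant G^{[-n,n]}f_n<\infty$ by the previous step), and $f_n$ is continuous and bounded on the bounded set $\overline{A_{n,N}}$, so $G^{n,N}f_n\in\mathcal{D}(\mathcal{L}^{n,N})$ with $\mathcal{L}^{n,N}G^{n,N}f_n=-f_n$; see e.g. the construction of $G^A$ in Section \ref{monotonicity} together with \cite[Section 1.3]{OY13}. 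With that in hand, the inequality chain is immediate and the proof is complete.

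\medskip

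\noindent\textbf{Main obstacle.} The only non-mechanical step is justifying $\mathcal{L}^{n,N}W_{n,N}=-f_n$ (membership of $W_{n,N}$ in the generator domain and the potential-theoretic inversion); once this is granted, everything else is the elementary cross-multiplication above plus the monotone comparison $G^{n,N}\leqslant G^{[-n,n]}$ feeding into Theorem \ref{upper bound}.
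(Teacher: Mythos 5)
Your proof is correct and follows essentially the same route as the paper's: reduce to the identity $\mathcal{L}^{n,N}W_{n,N}=-f_n$, use the monotone comparison $G^{n,N}f_n\leqslant G^{[-n,n]}f_n$ on $A_{n,N}$, and feed Theorem \ref{upper bound} into the elementary algebraic rearrangement. The paper handles the identity $\mathcal{L}^{n,N}G^{n,N}f_n=-f_n$ by first truncating $f_n$ to a bounded continuous function $f_{n,N}$ on all of $\mathbb{R}$ and citing Sturm's potential-theoretic result, which is the cleaner way to make the "standard resolvent/potential theory" step you flag as the main obstacle completely rigorous.
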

\prf
We first truncate the function $f_n$ and then define
  $$f_{n,N}(x)=\left\{\begin{array}{ll} (1-2^{-\frac{1}{\alpha-1}})n^{(\alpha-1)/2}, & |x|\leqslant n, \\f_n(x), & x\in A_{n,N}, \\ (N-2^{-\frac{1}{\alpha-1}}n)^{(\alpha-1)/2}, & |x|>N . \end{array}\right.$$
It is obvious that $f_{n,N}$ is a bounded continuous and strictly positive function. From \cite[proof of Theorem 3.4]{st05},  we see that
 $$\mathcal{L}^{n,N}G^{n,N}f_{n,N}=-f_{n,N},$$
that is,
\begin{equation}\label{LWnN}
\mathcal{L}^{n,N}W_{n,N}(x)=-f_n(x) \quad \text{for all }x \in A_{n,N}.
\end{equation}
Additionally, by the definition of Green operator in \eqref{Green operator}, it is clear that for all $g\in \mathscr{F}$ with $g\geq 0$,
\begin{equation}\label{GNn}
G^{n,N}g(x)\leq G^{[-n,n]}g(x)\quad \text{on } A_{n,N}.
\end{equation}
  Write $W_n(x)=G^{[-n,n]}f_n(x)$. Then from \eqref{LWnN} and \eqref{GNn} we can get that
$$\mathcal{L}^{n,N}W_{n,N}(x)=-\frac{f_n(x)}{W_{n,N}(x)+\varepsilon}(W_{n,N}(x)+\varepsilon)\leqslant-\frac{f_n(x)}{W_{n}(x)+\varepsilon}(W_{n,N}(x)+\varepsilon).$$
Hence, we can complete the proof by combining this inequality with Theorem \ref{upper bound}.
\deprf

Now let $\widetilde{\delta}_n=\sup_{|x|>n}\left\{(|x|-n)^{\alpha-1}\mu((-|x|,|x|)^c)\right\}$ for $n>0$. Consider $\widetilde{\delta}_n$ and $\delta_n$, we have the following lemma.
\begin{lemma}\label{delta_n-equiv}
    If $\lim_{n\rightarrow \infty} \widetilde{\delta}_n=0$, then $\lim_{n\rightarrow \infty}\delta_n=0$.
\end{lemma}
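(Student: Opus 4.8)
The plan is to exploit that $\delta_n$ and $\widetilde{\delta}_n$ differ only in the constant subtracted from $|x|$ before raising to the power $\alpha-1$: the quantity $\delta_n$ (defined in \eqref{delta-n}) uses $|x|-2^{-1/(\alpha-1)}n$, while $\widetilde{\delta}_n$ uses $|x|-n$. Since $\alpha\in(1,2)$ gives $1/(\alpha-1)>1$, the constant $c:=2^{-1/(\alpha-1)}$ lies in $(0,1/2)$; in particular $c<1$, so every $x$ with $|x|>n$ also satisfies $|x|>cn$. Hence $(|x|-cn)^{\alpha-1}\mu((-|x|,|x|)^c)$ is one of the quantities over which the supremum defining $\widetilde{\delta}_{cn}$ is taken.

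First I would record the elementary comparison $\delta_n\leqslant\widetilde{\delta}_{cn}$ for every $n>0$. Indeed, fixing $n$ and taking any $x$ with $|x|>n$ (hence $|x|>cn$),
$$
(|x|-cn)^{\alpha-1}\mu((-|x|,|x|)^c)\leqslant\sup_{|y|>cn}\Big\{(|y|-cn)^{\alpha-1}\mu((-|y|,|y|)^c)\Big\}=\widetilde{\delta}_{cn},
$$
and taking the supremum over $|x|>n$ on the left gives $\delta_n\leqslant\widetilde{\delta}_{cn}$.

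The conclusion is then immediate: since $c>0$ is a fixed constant, $cn\to\infty$ as $n\to\infty$, so the hypothesis $\lim_{m\to\infty}\widetilde{\delta}_m=0$ yields $\lim_{n\to\infty}\widetilde{\delta}_{cn}=0$, and combined with $0\leqslant\delta_n\leqslant\widetilde{\delta}_{cn}$ this forces $\lim_{n\to\infty}\delta_n=0$.

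There is no real obstacle; the one point requiring a moment's care is the direction of the comparison. Because $t\mapsto t^{\alpha-1}$ is increasing on $(0,\infty)$ and $|x|-cn>|x|-n$, one actually has $\delta_n\geqslant\widetilde{\delta}_n$, so comparing $\delta_n$ with $\widetilde{\delta}_n$ at the same index is useless; the resolution is to shift the index of $\widetilde{\delta}$ down to $cn$, which costs nothing in the limit since $c$ is a fixed positive number. (If an index of the form $n/2$ is preferred, the same computation with $|x|-cn\leqslant 2(|x|-n/2)$ for $|x|>n$ gives $\delta_n\leqslant 2^{\alpha-1}\widetilde{\delta}_{n/2}$, which works equally well.)
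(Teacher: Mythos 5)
Your proof is correct. The key observation---that $\delta_n$ is the supremum, over the set $\{|x|>n\}$, of the very same quantity $(|x|-cn)^{\alpha-1}\mu((-|x|,|x|)^c)$ whose supremum over the larger set $\{|x|>cn\}$ equals $\widetilde{\delta}_{cn}$ (with $c:=2^{-1/(\alpha-1)}\in(0,1)$)---gives $\delta_n\leqslant\widetilde{\delta}_{cn}$ cleanly, with no multiplicative constant. The paper argues differently: it first rewrites $\lim_n\delta_n$ as $\lim_n\delta_{2n}=\lim_n\sup_{x>2n}\{(x-2\beta_1 n)^{\alpha-1}\mu(\cdot)\}$ with $\beta_1=2^{-1/(\alpha-1)}$, then uses the pointwise algebraic bound $x-2\beta_1 n\leqslant(2-2\beta_1)(x-n)$ valid for $x\geqslant 2n$ (since $\beta_1<1/2$), finally restricting the supremum to land on $\widetilde{\delta}_n$ up to the constant factor $(2-2\beta_1)^{\alpha-1}$ (the paper actually writes $(2-2\beta_1)$, which is a harmless overestimate since $\alpha-1<1$). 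Both routes reduce the claim to a one-line domain or index comparison plus the trivial limit of a shifted sequence; yours is slightly more economical, avoiding both the change of scale $n\mapsto 2n$ and the extraneous constant. Your parenthetical remark about the direction of the comparison and why one must shift the index of $\widetilde{\delta}$ downward is exactly the right point to flag, and the alternate version with index $n/2$ that you note is essentially the rescaling device the paper chooses.
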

\prf For convenience, let $\beta_1:=2^{-\frac{1}{\alpha-1}}\in(0,1/2)$.
Note that
$$\lim_{n\rightarrow\infty}\sup_{x>n}\left\{(x-{\beta_1} n)^{\alpha-1}\mu((-x,x)^c)\right\}=\lim_{n\rightarrow\infty}\sup_{x>2n}\left\{(x-2\beta_1 n)^{\alpha-1}\mu((-x,x)^c)\right\},$$
and when $x>2n$,
$x-2\beta_1 n\leqslant(2-2\beta_1)(x-n)$. Therefore,
\begin{equation*}
	\begin{split}
		\lim_{n\rightarrow \infty}\delta_n&=\lim_{n\rightarrow\infty}\sup_{x>n}\left\{(x-\beta_1 n)^{\alpha-1}\mu((-x,x)^c)\right\}\\
&\leqslant(2-2\beta_1)\lim_{n\rightarrow\infty}\sup_{x>2n}\left\{(x- n)^{\alpha-1}\mu((-x,x)^c)\right\}
		\leqslant(2-2\beta_1)\lim_{n\rightarrow\infty}\widetilde{\delta}_n=0.\quad\quad \text{\deprf}\\
	\end{split}
\end{equation*}

With the above lemmas at hand, we can present the
\medskip

\noindent{\bf Proof of Theorem \ref{SPI}.} (1) Sufficiency.
Since $C_0^\infty(\mathbb{R})$ is the core of $(\mathscr{E},\mathscr{F})$,  it suffices to consider $f\in C_0^\infty(\mathbb{R})$.
We first divide $\mu(f^2)$ into three parts:
$$\mu(f^2)=\mu(f^2\mathbf{1}_{A_{n,N}})+\mu(f^2\mathbf{1}_{[-n,n]})+\mu(f^2\mathbf{1}_{[-N,N]^c}),$$
and estimate the first two terms on the right-hand side of the above equality in the following.

For the first term $\mu(f^2\mathbf{1}_{A_{n,N}})$, by \cite[Theorem 2.4]{st05},  we have  for any $\varepsilon>0$,
$$\int_{\mathbb{R}}(f\mathbf{1}_{A_{n,N}})^2\frac{-\mathcal{L}^{n,N}W_{n,N}}{ W_{n,N}+\varepsilon}\d \mu\leqslant\mathscr{E}^{n,N}(f\mathbf{1}_{A_{n,N}},f\mathbf{1}_{A_{n,N}}).$$
Combining this with Lemmas  \ref{lemma1} and \ref{lemma2} yields that
\begin{equation}\label{fristterm}
	\begin{split}	\mu(f^2\mathbf{1}_{A_{n,N}})&\leqslant(\Theta(n)+(\varepsilon/f_n(n)))\mu\left(f^2\mathbf{1}_{A_{n,N}}\frac{1}{\Theta(n)+(\varepsilon/f_n(x))}\right)\\
		&\leqslant (\Theta(n)+(\varepsilon/f_n(n)))\int_{\mathbb{R}}(f\mathbf{1}_{A_{n,N}})^2\frac{-\mathcal{L}^{n,N}W_{n,N}}{W_{n,N}+\varepsilon}\d \mu\\
		&\leqslant(\Theta(n)+(\varepsilon/f_n(n)))\mathscr{E}^{n,N}(f\mathbf{1}_{{A_{n,N}}},f\mathbf{1}_{{A_{n,N}}})\\
&\leqslant(\Theta(n)+(\varepsilon/f_n(n)))\left(\frac{6C_\alpha}{\kappa\alpha}+1\right)\mathscr{E}(f,f).
	\end{split}
\end{equation}
Let $\varepsilon\rightarrow0$, we have
$$\mu(f^2\mathbf{1}_{A_{n,N}})\leqslant\Theta(n)\left(\frac{6C_\alpha}{\kappa\alpha}+1\right)\mathscr{E}(f,f)=:C_0\Theta(n)\mathscr{E}(f,f).$$

For the second  term  $\mu(f^2\mathbf{1}_{[-n,n]})$, by local Super-Poincar\'e inequality \cite[proof of Theorem 1.2(ii)]{CW14}, there exists a constant $C_1>0$ such that
\begin{equation}\label{localP}\mu\left(f^2 \mathbf{1}_{[-n,n]}\right) \leq s \mathscr{E}(f, f)+C_1 K_0(n)\left(1+s^{-1 / \alpha}\right) \mu(|f|)^2, \quad n, s>0, f \in C_0^{\infty}(\mathbb{R}),
\end{equation}
 where 	$$K_0(n):=\frac{\left(\sup_{|x|\leqslant n}a(x)^{-1}\right)^{1+\frac{1}{\alpha}}}{\left(\inf_{|x|\leqslant n}a(x)^{-1}\right)^2}.$$
According to \eqref{fristterm} and \eqref{localP}, we get that
\begin{equation}\label{SPN}
	\begin{split}
		\mu(f^2)&=\mu(f^2\mathbf{1}_{{A_{n,N}}})+\mu\left(f^2 \mathbf{1}_{[-n,n]}\right)+\mu(f^2\mathbf{1}_{[-N,N]^c})\\
		&\leqslant\left(s+C_0\Theta(n)\right) \mathscr{E}(f, f)+C_1 K_0(n)\left(1+s^{-1 / \alpha}\right) \mu(|f|)^2+\mu(f^2\mathbf{1}_{[-N,N]^c}).
	\end{split}
\end{equation}
Now letting  $N\rightarrow\infty$ in \eqref{SPN}. Since $\Theta(n):=4c_\alpha\delta_n/{(\alpha-1)}\downarrow 0$ as $n\rightarrow\infty$ by $\lim_{n\rightarrow\infty}\widetilde{\delta}_n=0$ (i.e., \eqref{deltatilde}) and Lemma \ref{delta_n-equiv},
 the right inverse $\Theta^{-1}(r):=\sup\{s\geqslant0: \Theta(s)\geqslant r\}$ is well defined. Furthermore, $\Theta^{-1}(r)$ is  non-increasing.
Taking $n=\Theta^{-1}(s/C_0)$, from the fact that $\mu(f^2\mathbf{1}_{[-N,N]^c})\rightarrow0$ we have
\begin{equation*}
	\mu\left(f^2\right) \leqslant 2s \mathscr{E}(f, f)+C_1K_0\circ\Theta^{-1}\left(\frac{s}{C_0}\right)(1+s^{-1/\alpha}) \mu(|f|)^2.
\end{equation*}
Hence, by replacing $2s$ by $r$, we can obtain the super-Poincar\'e inequality \eqref{super poincare} with
$$
 \beta(r):=2^{1/\alpha}C_1K_0\circ\Theta^{-1}\left(\frac{r}{2C_0}\right)(1+r^{-1/\alpha}),
$$
which is non-increasing.

(2) Necessity. Assume that the super-Poincar\'e inequality holds with some  non-increasing function $\beta(r): (0,\infty)\rightarrow(0,\infty)$.
Then by \cite[Theorem 3.4.1]{wfy05},  we have
\begin{equation}\label{lambdan-infty}
	\begin{split}
		\lambda_0([-n,n]^c)&:=\inf \{\mathscr{E}(f, f): f \in \mathscr{F}, \mu(f^{2})=1, f|_{[-n,n]}=0\}\\
		&\geqslant \sup_{r>0}\frac{1}{r}\left(1-\beta(r)\mu([-n,n]^c)\right)\rightarrow\infty\quad\text{as}\ n\rightarrow\infty.
	\end{split}
\end{equation}
Additionally, employing analogous arguments as used in the proof of \eqref{lambdax_0+} and \eqref{lambdax_0-}, we establish that  for all $x_0>n$,
	\begin{equation*}
		\lambda_{0}\left([-n,n]^{c}\right)\leqslant\left(K_\alpha\delta_n^+(x_{0})\right)^{-1}\quad\text{and} \quad \lambda_{0}\left([-n,n]^{c}\right)\leqslant\left(K_\alpha\delta_n^-(x_{0})\right)^{-1},
	\end{equation*}
where $$\delta_n^+(x_0):=n^{\alpha-1}h(x_0/n)\mu((x_0,\infty)),\quad \delta_n^-(x_0):=n^{\alpha-1}h(x_0/n)\mu((-\infty,-x_0)),$$
and $h$ is the function defined in \eqref{def-h}. Therefore, from the fact that  $$\lim_{n\rightarrow\infty}\frac{h(x_0/n)}{(x_0/n)^{\alpha-1}}=\frac{1}{\alpha-1},$$
we obtain that
\begin{equation}\label{lambdan+}
	\begin{split}
		\lambda_0([-n,n]^c)&\leqslant\frac{1}{\lim\limits_{x_0\rightarrow\infty}K_\alpha\delta_n^+(x_{0})}=\frac{\alpha-1}{K_\alpha\lim\limits_{x_0\rightarrow\infty}x_0^{\alpha-1}\mu((x_0,\infty))}\\
    	&\leqslant\frac{\alpha-1}{K_\alpha\lim\limits_{x_0\rightarrow\infty}(x_0-n)^{\alpha-1}\mu((x_0,\infty))}
    	=:\frac{\alpha-1}{K_\alpha\widetilde{\delta}_n^+}.
	\end{split}
\end{equation}

Similarly, we also have
\begin{equation}\label{lambdan-}	
\lambda_0([-n,n]^c)\leqslant\frac{\alpha-1}{K_\alpha\widetilde{\delta}_n^-},
\end{equation}
where $\widetilde{\delta}_n^-:=\lim\limits_{x_0\rightarrow\infty}(x_0-n)^{\alpha-1}\mu((-\infty,-x_0))$.

Now if $\lim_{n\rightarrow\infty}\delta_n>0$, then at least one of the limits $\lim_{n\rightarrow\infty}\widetilde{\delta}_n^+$ and $\lim_{n\rightarrow\infty}\widetilde{\delta}_n^-$ must be positive. Consequently, according to  \eqref{lambdan+} and \eqref{lambdan-}, we observe that
$$\lim_{n\rightarrow\infty}\lambda_0([-n,n]^c)<\infty,$$
which contradicts \eqref{lambdan-infty}. Therefore, we conclude that $\lim_{n\rightarrow\infty}\delta_n=0$.
\deprf

In the rest of this section, we present the proofs of Examples \ref{polyno-functional} and \ref{ex-log}.

\noindent{\bf Proof of Example \ref{polyno-functional}.}
Recall that $\alpha\in (1,2)$ and $a(x)=\sigma^\alpha(x)=C^\alpha_{\alpha,\gamma}(1+|x|)^{\alpha\gamma}$ in this example.
Thus, for all $x>0$ we have
$$
\mu((-x,x)^c)=2C^{-\alpha}_{\alpha,\gamma}\int_x^\infty (1+z)^{-\alpha\gamma}\d z.
$$
Furthermore, from L'Hopital's rule we get that
$$
\lim_{x\rightarrow \infty}\frac{\mu((-x,x)^c)}{(1+x)^{-\alpha\gamma +1}}=\frac{2C^{-\alpha}_{\alpha,\gamma}}{\alpha\gamma-1}.
$$
Using this with Theorems \ref{exp erg}, \ref{thm-logS}, \ref{thm-Nash}, \ref{SPI} and \ref{interp} will yield the desired results.
\deprf

\noindent{\bf Proof of Example \ref{ex-log}.}
Recall that $\alpha\in (1,2)$ and $a(x)=\sigma^\alpha(x)=C^\alpha_{\alpha,\gamma}(1+|x|)^\alpha \log^\gamma(e+|x|)$ in this example. Therefore, for all $x>0$ we have
$$
\mu((-x,x)^c)=2C^{-\alpha}_{\alpha,\gamma}\int_x^\infty(1+z)^{-\alpha} \log^{-\gamma}(e+z)\d z.
$$
Furthermore, from L'Hopital's rule we get that
$$
\lim_{x\rightarrow \infty}\frac{\mu((-x,x)^c)}{(1+x)^{-\alpha+1}\log^{-\gamma}(e+x)}=\frac{2C^{-\alpha}_{\alpha,\gamma}}{\alpha-1}.
$$
Using this with Theorems \ref{exp erg}, \ref{thm-logS}, \ref{thm-Nash}, \ref{SPI} and \ref{interp} will yield the desired results.
\deprf

\bigskip

{\bf Acknowledgement}\ L.-J. Huang is partially supported by National Key R\&D Program of China No. 2023YFA1010400.


\bibliographystyle{plain}
\bibliography{symmetry1}

\medskip

\end{document}